\newcommand{\D}{\mathcal{D}}
\newcommand{\sigmav}{\check{\sigma}}
\newcommand{\deltav}{\check{\delta}}
\newcommand{\face}[2]{\text{face}\left(#1,#2\right)}
\newtheorem{thm}{Theorem}[section]
\newtheorem{lem}[thm]{Lemma}
\newtheorem{prop}[thm]{Proposition}
\newtheorem{cor}[thm]{Corollary}
\theoremstyle{definition}
\newtheorem{defn}[thm]{Definition}
\newtheorem{exmp}[thm]{Example}
\newtheorem{rem}[thm]{Remark}
\newcommand{\CC}{\mathbf{C}}
\newcommand{\RR}{\mathbf{R}}
\newcommand{\QQ}{\mathbf{Q}}
\newcommand{\ZZ}{\mathbf{Z}}
\newcommand{\NN}{\mathbf{N}}
\newcommand{\PP}{\mathbf{P}}
\newcommand{\A}{\mathbf{A}}
\newcommand{\m}{\mathfrak{m}}
\DeclareMathOperator{\relint}{relint}
\DeclareMathOperator{\tail}{tail}
\DeclareMathOperator{\SF}{SF}
\DeclareMathOperator{\tcadiv}{T-CaDiv}
\DeclareMathOperator{\Hom}{Hom}
\DeclareMathOperator{\divisor}{div}
\DeclareMathOperator{\wdiv}{Div}
\DeclareMathOperator{\spec}{Spec}
\DeclareMathOperator{\ord}{ord}
\DeclareMathOperator{\supp}{supp}
\DeclareMathOperator{\xrays}{x-rays}
\DeclareMathOperator{\Pol}{Pol}
\DeclareMathOperator{\loc}{Loc}
\DeclareMathOperator{\pdv}{X}
\DeclareMathOperator{\conv}{conv}
\DeclareMathOperator{\pos}{pos}
\DeclareMathOperator{\vol}{vol}
\DeclareMathOperator{\Graph}{\Gamma}
\newcommand{\fan}{\Xi}
\renewcommand{\O}{{\mathcal{O}}}
\newcommand{\CO}{{\mathcal{O}}}
\newcommand{\X}{{\mathfrak{X}}}
\newcommand{\tropical}[1]{{\mathcode`\*="020C\mathcode`\+="0208\renewcommand{\cdot}{\odot}\renewcommand{\sum}{\bigoplus}
$#1$}}
\newcommand{\mtrop}[1]{\text{\tropical{#1}}}
\begin{document}
\title{Canonical divisors on T-varieties}
%
%
\author[H.~S\"uss]{Hendrik S\"uss }
\address{Institut f\"ur Mathematik,
        LS Algebra und Geometrie,
        Brandenburgische Technische Universit\"at Cottbus,
        PF 10 13 44, 
        03013 Cottbus, Germany}
\email{suess@math.tu-cottbus.de}

\begin{abstract}
  Generalising toric geometry we study compact varieties admitting lower dimensional torus actions. In particular we describe divisors on them in terms of convex geometry and give a criterion for their ampleness. These results may be used to study Fano varieties with small torus actions. As a first result we classify log del Pezzo $\CC^*$-surfaces of Picard number 1 and Gorenstein index $\leq 3$. In further examples we show how classification might work in higher dimensions and we give explicit descriptions of some equivariant smoothings of Fano threefolds.
\end{abstract}

\maketitle
\tableofcontents
\section*{Introduction}
A lot of effort has been spend on the classification of Fano varieties. In dimension $2$ singular del Pezzo surfaces are completely classified up to Gorenstein index $2$. In dimension~$3$ there exists a complete classification of smooth Fano varieties (\cite{0382.14013}, \cite{0424.14012}, \cite{MR641971}).

Toric geometry has been successfully used to study Fano varieties and their applications in mirror symmetry (\cite{MR1269718}, \cite{MR1463173}).
There exists a classification of Gorenstein toric Fano varieties up to dimension $5$ and the conjectured boundedness of Fano varieties was proven for toric varieties \cite{1468-4802-75-1-A17}. 

On the other hand toric geometry handles only a very restricted class of varieties. In order to extend toric results and techniques to a broader class of varieties we may weaken our preconditions and consider T-varieties, i.e. we require only the effective action of a lower dimensional torus.  Not much analysis has been done for this special case, yet. Hence, there are only general statements available.

 In order to study Fano varieties we suggest to use the description of T-varieties from \cite{divfans} and invariant divisors on them from \cite{petersen-suess08}. For the case that dimensions of the variety and the dimension of the acting torus differ by one these descriptions coincide with that given by Timashev \cite{0911.14022,1034.14004}.

 For classification purposes criteria for ampleness as well as formul\ae\ for important invariants as the Gorenstein index, the Fano index, the Picard number and the Fano degree are needed. In this paper we provide these tools and give first applications.

\goodbreak

The paper is organised as follows:
In section~\ref{sec:t-varieties} we recall the description of T-varieties 
by polyhedral divisors and divisorial fans concentrating on the special case of torus actions of codimension~1.

In section~\ref{sec:invariant-divisors} we study invariant divisors, describe them by piecewise affine functions and give a criterion for ampleness. In addition to that we provide a way to calculate a canonical divisor and intersection numbers.

Section~\ref{sec:singularities} uses these results to study local properties on T-varieties such as smoothness, $\QQ$-factoriallity, $\QQ$-Gorenstein and log-terminal properties. 

As a first application in section~\ref{sec:del-pezzo} we classify log del Pezzo $\CC^*$-surfaces of Picard rank $1$ and Gorenstein index $\leq 3$.

In section~\ref{sec:examples-dimension-3} we pass to dimension $3$ and give some ideas how classification might work here. In addition to that we give examples of equivariant smoothings of a Fano variety.

\section{T-Varieties}
\label{sec:t-varieties}
First we recall some facts and notations from convex geometry.
Here, $N$ always is a lattice and $M:=\Hom(N,\ZZ)$ its dual.
The associated $\QQ$-vector spaces $N \otimes \QQ$ and $M \otimes \QQ$ are denoted by $N_\QQ$ and $M_\QQ$ respectively.
Let $\sigma \subset N_\QQ$ be a pointed convex polyhedral cone.
A polyhedron $\Delta$ which can be written as a Minkowski sum 
$\Delta = \pi + \sigma$ of $\sigma$ and a compact polyhedron $\pi$ is said to have $\sigma$ as its tail cone.

With respect to Minkowski addition the polyhedra with tail cone $\sigma$ form a semi-group which we denote by $\Pol_{\sigma}^+(N)$.
Note that $\sigma \in \Pol_{\sigma}^+(N)$ is the neutral element of this semi-group and that $\emptyset$  by definition is also an element of $\Pol_{\sigma}^+(N)$.

A {\em polyhedral divisor} with tail cone $\sigma$ on a normal variety $Y$ is a 
formal finite sum
$$\D = \sum_D \Delta_D \otimes D,$$
where $D$ runs over all prime divisors on $Y$ and $\Delta_D \in \Pol^+_\sigma$.
Here, finite means that only finitely many coefficient differ from the 
tail cone, i.e. the neutral element of $\Pol^+_\sigma$.

We can evaluate a polyhedral divisor for every element $u \in \sigma^\vee \cap M$ via
$$\D(u):=\sum_D \min_{v \in \Delta_D} \langle u , v \rangle D,$$
in order to obtain an ordinary divisor on $\loc \D$.
Here, $\loc \D$ denotes the {\em locus} of $\D$, the subset of $Y$ where $\D$ has non-empty coefficients 
$\loc \D := Y \setminus \left( \bigcup_{\Delta_D = \emptyset} D \right)$.

\begin{defn}
  A polyhedral divisor $\D$ is called {\em proper} if 
  \begin{enumerate}
  \item it is Cartier, i.e. $\D(u)$ is Cartier for every $u \in \sigma^\vee \cap M$,
  \item it is semi-ample, i.e.  $\D(u)$ is semi-ample for every $u \in \sigma^\vee \cap M$,
  \item $\D$ is big outside the boundary, i.e. $\D(u)$ is big for every $u$ in the relative interior of $\sigma^\vee$.
  \end{enumerate}
\end{defn}

To a proper polyhedral divisor we associate an $M$-graded $\CC$-algebra and consequently
an affine scheme admitting a $T^N$-action:
$$\pdv(\D):= \spec \bigoplus_{u \in \sigma^\vee \cap M} \Gamma(\CO(\D(u))).$$

From \cite{MR2207875} we know that this construction gives a normal variety of dimension $\dim Y + \dim N$. Moreover, every normal affine variety with torus action can be obtained this way.

\begin{defn}
Let $D=\sum_D \Delta_D \otimes D$, $D'=\sum_D \Delta'_D \otimes D$ be two polyhedral divisors on $Y$.
\begin{enumerate}
\item We write $\D' \subset \D$ if $\Delta'_D \subset \Delta_D$ holds for every prime divisor $D$.
\item We define the {\em intersection} of two polyhedral divisors $$\D \cap \D' := \sum_D (\Delta'_D \cap \Delta_D) \otimes D.$$
\item We define the {\em degree} of a polyhedral divisor on a curve $Y$ to be  $$\deg \D := \sum_D \Delta_D \deg(D).$$
  {(\em Note: If $\D$ carries $\emptyset$-coefficients we automatically get $\deg \D = \emptyset$.)}
\item For a (not necessarily closed) point $y \in Y$ we define the {\em fibre polyhedron} $$\D_y := \sum_{y \in D} \Delta_D.$$
{(\em Note that we may recover $\Delta_D=\D_D$ this way.)}
\item We can restrict $\D$ to an open subset $U \subset Y$:
  $$\D|_U := \D + \sum_{D \cap U = \emptyset} \emptyset \otimes D$$
\end{enumerate}
\end{defn}

Assume $\D' \subset \D$. This implies
$$\bigoplus_{u \in \sigma^\vee \cap M} \Gamma( \CO(\D'(u)))
\supset \bigoplus_{u \in \sigma^\vee \cap M} \Gamma( \CO(\D(u)))$$ and we get a dominant morphism $\pdv(\D') \rightarrow \pdv(\D)$.

\begin{defn}
  If $\D' \subset \D$ and the corresponding morphism defines an open inclusion, then we say $\D'$ is a face of $\D$ and write $\D' \prec \D$.
\end{defn}

\begin{defn}\ 
  \begin{enumerate}
  \item   A {\em divisorial fan} is a finite set $\fan$ of polyhedral divisors such that for $\D,\D' \in \fan$ we have $\D \succ \D' \cap \D \prec \D'$.
  \item The polyhedral complex $\fan_y$ defined by the polyhedra $\D_y$ is called a {\em slice} of $\fan$.
  \item $\fan$ is called {\em complete} if all slices $\fan_y$ are complete subdivisions of $N_\QQ$.
  \end{enumerate}
\end{defn}

We may glue the affine varieties $\pdv(\D)$ via
$\pdv(\D) \hookleftarrow \pdv(\D \cap \D') \hookrightarrow \pdv(\D').$
By \cite[Prop. 5.3.]{divfans} we know that the cocycle condition is fulfilled, so we obtain a scheme this way.  In the case of a complete fan we get a complete variety.

As a divisorial fan $\fan$ corresponds to an open affine covering of $\pdv(\fan)$ it is not unique, because we may switch to another invariant open affine covering. We will do this occasionally by refining an existing divisorial fan.

Let us consider the affine case. $X = \spec A$, where $A = \bigoplus_{u} \Gamma(\CO(\D(u)))$. 
We have $A_0 = \Gamma(\loc \D,\CO_{\loc \D})$ and consequently get proper, surjective  maps to $Y_0 := \spec A_0$, the categorical quotient of $X$
$$q: X \rightarrow Y_0,\qquad \pi: \loc \D \rightarrow Y_0$$

\begin{lem}
\label{sec:lem-refine}
  Let $\D$ be a proper polyhedral divisor on $Y$ and $\{U_i\}_{i \in I}$ an open affine covering of $Y_0$. 
  Then $q^{-1}(U_i) \cong \pdv(\D|_{\pi^{-1}(U_i)})$. 
  Moreover we get a divisorial fan 
$\fan := \{\pdv(\D|_{\pi^{-1}(U_i)})\}_{i \in I}$
such that $\pdv(\fan) \cong \pdv(\D)$.
\end{lem}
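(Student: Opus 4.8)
The plan is to reduce everything to a local computation on $Y_0$ and to exploit the functoriality of the $\pdv$-construction under restriction. First I would recall that the categorical quotient map $q\colon X\to Y_0$ comes from the inclusion $A_0\hookrightarrow A$, so that for an open affine $U_i=\spec (A_0)_{f_i}\subset Y_0$ the preimage $q^{-1}(U_i)$ is the affine scheme $\spec A_{f_i}$, where $f_i\in A_0=\Gamma(\loc\D,\CO_{\loc\D})$ is viewed as a degree-zero element of $A$. So the first step is to identify $A_{f_i}$ with the algebra associated to a polyhedral divisor; the natural candidate is $\D|_{\pi^{-1}(U_i)}$, since passing to $\pi^{-1}(U_i)\subset\loc\D$ means exactly deleting from the base those prime divisors that meet $\pi^{-1}(U_i)$ trivially, i.e. assigning them the $\emptyset$-coefficient as in the definition of $\D|_U$.

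The key step is then the identity
$$\Gamma\bigl(\pi^{-1}(U_i),\CO(\D(u))\bigr)=\Gamma(\loc\D,\CO(\D(u)))_{f_i}$$
for every $u\in\sigma^\vee\cap M$, together with the matching statement that $\pi^{-1}(U_i)$ is again affine (being the preimage of an affine under the affine morphism $\pi$) so that $\D|_{\pi^{-1}(U_i)}$ is still a proper polyhedral divisor and the $\pdv$-construction applies. The first identity is just the compatibility of sections of a line bundle with localisation at a global function on an affine scheme, applied degree by degree; summing over $u$ gives
$$\bigoplus_{u}\Gamma\bigl(\CO(\D|_{\pi^{-1}(U_i)}(u))\bigr)=\bigoplus_u \Gamma(\CO(\D(u)))_{f_i}=A_{f_i},$$
which is exactly $\Gamma(q^{-1}(U_i),\CO_X)$. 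Taking $\spec$ yields $q^{-1}(U_i)\cong\pdv(\D|_{\pi^{-1}(U_i)})$, the first assertion.

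For the second assertion I would check that $\fan:=\{\D|_{\pi^{-1}(U_i)}\}_{i\in I}$ is a divisorial fan, i.e. that for any two indices $i,j$ the intersection $\D|_{\pi^{-1}(U_i)}\cap\D|_{\pi^{-1}(U_j)}$ is a common face of both. But by the definition of restriction this intersection is $\D|_{\pi^{-1}(U_i)\cap\pi^{-1}(U_j)}=\D|_{\pi^{-1}(U_i\cap U_j)}$, and $U_i\cap U_j$ is again open affine in $Y_0$; the corresponding morphism $q^{-1}(U_i\cap U_j)\hookrightarrow q^{-1}(U_i)$ is the inclusion of a principal open subset, hence an open immersion, which by the defining property of $\prec$ means exactly $\D|_{\pi^{-1}(U_i\cap U_j)}\prec\D|_{\pi^{-1}(U_i)}$, and symmetrically for $j$. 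Finally the glued scheme $\pdv(\fan)$ is obtained by gluing the $q^{-1}(U_i)$ along the $q^{-1}(U_i\cap U_j)$ exactly as the $U_i$ are glued to form $Y_0$, so $\pdv(\fan)\cong X=\pdv(\D)$.

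The main obstacle I anticipate is the purely formal but slightly delicate verification that the faces match up on the nose: one must be sure that the open immersion $q^{-1}(U_i\cap U_j)\hookrightarrow q^{-1}(U_i)$ produced by localisation is the same morphism as the one arising from the inclusion of polyhedral divisors $\D|_{\pi^{-1}(U_i\cap U_j)}\subset\D|_{\pi^{-1}(U_i)}$, so that Definition of $\prec$ genuinely applies; this is where one has to be careful that restriction of polyhedral divisors and localisation of the associated algebras are compatible in the strong (not merely up to isomorphism) sense. Everything else is bookkeeping with the definitions already set up in the excerpt.
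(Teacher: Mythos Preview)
Your argument is essentially correct and spells out in detail what the paper delegates entirely to a citation: its proof reads in full ``This is a direct consequence of proposition~3.3 in \cite{divfans}.'' So there is nothing to compare strategically; you have unpacked the cited statement rather than offered an alternative route.

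Two small corrections are worth noting. First, the map $\pi\colon\loc\D\to Y_0$ is proper, not affine (if $\loc\D$ is complete then $Y_0$ is a point), so your parenthetical justification that $\pi^{-1}(U_i)$ is affine is wrong. Fortunately this does not damage the section identity you need: $\pi_*\CO(\D(u))$ is a quasi-coherent sheaf on the affine scheme $Y_0$, so its sections over the principal open $D(f_i)$ are the localisation at $f_i$, which is exactly $\Gamma(\pi^{-1}(U_i),\CO(\D(u)))=\Gamma(\loc\D,\CO(\D(u)))_{f_i}$. Second, and relatedly, the point you gloss over---that $\D|_{\pi^{-1}(U_i)}$ is again a \emph{proper} polyhedral divisor---is not handled by your affineness claim. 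Cartier and semi-ample restrict to open subsets without difficulty, but bigness does not in general; this is precisely one of the things the cited proposition provides. Finally, you tacitly assume the $U_i$ are principal opens; for arbitrary affine opens one either refines the cover or replaces localisation by tensoring with $\Gamma(U_i,\CO_{Y_0})$ over $A_0$, which works for the same quasi-coherence reason.
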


\begin{proof}
  This is a direct consequence of proposition~3.3 in \cite{divfans}.
\end{proof}

\begin{rem}
\label{sec:rem-toric}
For classification purposes it is crucial to identify torus actions which extend to an effective action of $T^{\dim X}$, i.e. essentially toric situations.

  We may consider a complete toric variety $X:=X_\fan$ and restrict its torus action to that of a smaller torus $T \hookrightarrow T_X$. This gives rise to a divisorial fan in the following way. 

The embedding $T'\hookrightarrow T$ corresponds to an exact sequence of lattices $$0 \rightarrow N \stackrel{F}{\rightarrow} N_X \stackrel{P'}{\rightarrow} N' \rightarrow 0.$$ We may choose a splitting $N_X \cong N \oplus N'$ with projections $$P:N_X\rightarrow N,\quad P':N_X\rightarrow N'.$$

 Furthermore we choose $Y=\Sigma'$, where $\Sigma'$ is an arbitrary smooth projective fan refining $P(\Sigma)$. Then every cone $\sigma \in \Sigma$ yields a proper polyhedral divisor. For every ray $\rho \in \Sigma^{(1)}$ with primitive vector $n_\rho$ we set $\Delta_\rho = P(P'^{-1}(n_\rho))$ and $\D_\sigma=\sum_{\rho \in \Sigma^{(1)}} \Delta_\rho \otimes D_\rho$. Now $\{ \D_\sigma \}_{\sigma \in \Sigma}$ is a divisorial fan.

 The other way around we may consider a divisorial fan $\fan$ on a toric variety $Y=X_{\Sigma}$, such that $\D_D = \tail \D$ for every $\D \in \fan$ and every non-invariant prime divisor 
$D \subset Y$. Then every $\D = \sum_{\rho \in \Sigma^{(1)}} \Delta_\rho \otimes D_\rho \in \fan$ gives rise to a cone  $$\sigma_\D := \QQ^+ \cdot \left(\bigcup_{\rho \in \Sigma^{(1)}} \{n_\rho\}\times \Delta_{\rho} \right) \subset N' \oplus N.$$

Here the fact that the cone is pointed corresponds to the bigness of the corresponding polyhedral divisor and the convexity corresponds to the semi-ampleness.
\end{rem}

For the rest of the paper we restrict to the case that a torus of dimension $\dim X -1$  acts on $X$. This means that the underlying variety $Y$ of the corresponding divisorial fan is a projective curve. 

In this situation the locus of a polyhedral divisor may be affine or complete and we get simple criteria for properness and for the face relation:
\begin{itemize}
\item   $\D$ is a proper polyhedral divisor if $\deg \D \subsetneq \tail \D$ and for every $u \in \sigma^\vee$ with 
  $$\face{\tail \D}{u} \cap \deg \D \neq \emptyset$$
 some multiple of $\D(u)$ has to be principal.

\item 
  $\D' \prec \D$ holds if and only if 
  $\Delta'_P$ is a face of $\Delta_P$ for every point $P \in Y$ and we have $\deg \D \cap \tail \D' = \deg \D'$.
\end{itemize}

\begin{thm}[\cite{MR2207875}, Theorem~8.8.]
\label{sec:thm-iso-divisor}
  Let $\D,\D'$ be polyhedral divisors on $Y$, $Y'$. Then we have $\pdv(\D)\cong \pdv(\D')$ if and only if there are isomorphisms $\varphi:Y \rightarrow Y'$, $F:N'\rightarrow N$ such that
$$\D_{\varphi(P)}=F(\D'_P) + \sum_{i=1}^n a^i_P e_i,$$
  Here, $e_1,\ldots,e_n$ is a lattice basis of $N$ and $D_i := \sum_P a^i_P P$ are principal divisors.

  We write $\D \sim F_*\varphi^*\D'$ in this case.
\end{thm}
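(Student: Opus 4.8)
The assertion is an equivalence, and I would prove the two directions separately. The ``if'' direction is a direct construction. Given isomorphisms $\varphi$ of the base curves and $F\colon N'\to N$ of the lattices together with functions $f_i\in\CC(Y)^*$ satisfying $\divisor(f_i)=D_i=\sum_P a^i_P P$ and realising the relation in the statement, I would first read off, by evaluation, that $\D(u)=\varphi^*\D'(F^\vee u)+\sum_i\langle u,e_i\rangle\divisor(f_i)$ for every $u\in(\tail\D)^\vee\cap M$, where $F^\vee\colon M\to M'$ is the dual isomorphism (note $F^\vee$ restricts to a bijection of $(\tail\D)^\vee\cap M$ with $(\tail\D')^\vee\cap M'$). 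Hence transporting along $\varphi$ and twisting by $\prod_i f_i^{\langle u,e_i\rangle}$ gives a vector-space isomorphism $\Gamma(\CO(\D(u)))\cong\Gamma(\CO(\D'(F^\vee u)))$; since the twisting exponents are additive in $u$ these isomorphisms respect multiplication, so together with $\chi^u\mapsto\chi^{F^\vee u}$ they assemble into an isomorphism of $M$-graded $\CC$-algebras of the two section rings. Applying $\spec$ yields $\pdv(\D)\cong\pdv(\D')$, equivariant for the torus automorphism induced by $F$.

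For the converse I would start from an isomorphism $\Psi\colon\pdv(\D)\to\pdv(\D')$. It intertwines the two torus actions through some lattice isomorphism $F\colon N'\to N$, so applying the ``if'' direction to $(\mathrm{id},F)$ one may replace $\D'$ by $F_*\D'$ and assume $N=N'$, $F=\mathrm{id}$, and that $\Psi$ comes from a graded isomorphism $\Psi^\sharp$ of the section rings $A=\bigoplus_u A_u\chi^u$ and $A'=\bigoplus_u A'_u\chi^u$, regarded as graded subrings of $\CC(Y)[M]$ and $\CC(Y')[M]$. Extending $\Psi^\sharp$ to fraction fields, its degree-zero part is a $\CC$-algebra isomorphism $\CC(Y)=\mathrm{Frac}(A)^{T^N}\to\mathrm{Frac}(A')^{T^N}=\CC(Y')$ --- recall that for interior $u$ the divisor $\D(u)$ is big on the curve, so ratios of sections of $\D(mu)$ generate $\CC(Y)$. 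Since $Y$ is a curve, this field isomorphism is induced by an isomorphism $\varphi$ of curves, after which I may assume $Y=Y'$, the locus $\loc\D$ being recovered intrinsically as $\spec A_0$ when $A_0\neq\CC$ and as the smooth complete model of $\CC(Y)$ otherwise. Finally $\Psi^\sharp$ must send each $\chi^u$ to some $g_u\chi^u$ with $g_u\in\CC(Y)^*$, and multiplicativity forces $g_{u+u'}=g_ug_{u'}$; thus $u\mapsto g_u$ is a homomorphism $M\to\CC(Y)^*$, and setting $f_i:=g_{e_i^*}$ for the basis $e_i^*$ of $M$ dual to $e_1,\dots,e_n$ one has $g_u=\prod_i f_i^{\langle u,e_i\rangle}$ and $\Psi^\sharp(f\chi^u)=\varphi^*(f)\,g_u\chi^u$.

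It remains to recover and compare the polyhedral coefficients. The key point is that for $u$ in the relative interior of $(\tail\D)^\vee$ and $m\gg0$ the divisor $\D(mu)$ has large degree on $Y$, hence $h^0(\D(mu))>h^0(\D(mu)-P)$ for every point $P$, so that $\ord_P\D(mu)=\max\{-\ord_P(f):0\neq f\chi^{mu}\in A\}$; that is, $\D(mu)$ is reconstructed intrinsically from the graded subring $A\subset\CC(Y)[M]$. By positive homogeneity $\ord_P\D(u)=\tfrac1m\ord_P\D(mu)$, and by continuity of support functions this determines $\ord_P\D(u)=\min_{v\in\D_P}\langle u,v\rangle$ for all $u\in(\tail\D)^\vee$, hence determines each fibre polyhedron $\D_P$, since a polyhedron with prescribed tail cone is recovered from its support function on the dual cone. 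Feeding $A'=\Psi^\sharp(A)$ into the same recipe and using $\Psi^\sharp(f\chi^u)=\varphi^*(f)g_u\chi^u$, one obtains $\ord_P\D'(mu)=\ord_{\varphi(P)}\D(mu)-\ord_P(g_{mu})$ for such $u$, whence $\min_{v\in\D_{\varphi(P)}}\langle u,v\rangle=\min_{v\in\D'_P}\langle u,v\rangle+\langle u,\sum_i\ord_P(f_i)e_i\rangle$ for all $u\in(\tail\D)^\vee$; this is precisely the relation in the statement with $a^i_P:=\ord_P(f_i)$ and $D_i=\divisor(f_i)$ principal, and reinstating $F$ finishes the proof.

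The step I expect to be the genuine obstacle is the recovery of the base $Y$ from $\pdv(\D)$. In the general, higher-dimensional situation the quotient $\loc\D$ is determined only up to proper birational modification, which is why the clean form of the theorem above requires the bases to be suitably minimal; in the codimension-one setting at hand this causes no trouble, since $\loc\D$ is a curve and a smooth curve admits no nontrivial birational contractions. A secondary subtlety --- that an equality of modules of global sections need not force an equality of Weil divisors --- is circumvented by reconstructing the divisors $\D(mu)$ in the large-degree range, where global sections do pin them down, instead of arguing one degree at a time.
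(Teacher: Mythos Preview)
The paper does not supply a proof of this theorem: it is quoted from \cite{MR2207875}, Theorem~8.8, and no argument is given in the text. So there is no in-paper proof to compare against.

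Your sketch is essentially correct and follows the natural line of argument (indeed, it is close in spirit to what Altmann and Hausen do in the cited reference, specialised to the curve base that is in force at this point of the paper). Two small points are worth making explicit. First, in the converse you silently assume that the isomorphism $\Psi$ is torus-equivariant, i.e.\ intertwines the two $T^N$-actions through an automorphism of tori; this is the intended meaning here and in the reference, but since the displayed statement only writes $\pdv(\D)\cong\pdv(\D')$ you should say so. Second, when you write that $\Psi^\sharp$ sends $\chi^u$ to $g_u\chi^u$, you are working in the graded fraction field rather than in the section ring itself (as $\chi^u$ need not lie in $A$); this is harmless once stated, and your multiplicativity argument for $u\mapsto g_u$ then goes through.

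Your closing remark about recovering the base is exactly the point: in the paper the theorem appears \emph{after} the standing hypothesis that $Y$ is a smooth projective curve, and it is precisely the absence of nontrivial birational contractions of curves that lets you recover $Y$ on the nose rather than only up to proper birational modification as in the general Altmann--Hausen statement. Your large-degree trick for reconstructing $\D(mu)$ from the graded pieces is the right device in the complete-locus case; when $\loc\D$ is affine the reconstruction of $\D(u)$ from $A_u\subset K(Y)$ is even more direct, since on a smooth affine curve the divisor is determined by its module of sections as a fractional ideal.
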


One could ask oneself if $\pdv(X)$ is already determined by prime divisor slices $\fan_D$ of $\fan$. In general this is not the case. We really need to know which polyhedra in different slices belong to the same polyhedral divisor.

For a divisorial fan on a curve which consists only of polyhedral divisors with affine locus the situation is different. If we consider two such fans $\fan,\fan'$ having the same slices lemma~\ref{sec:lem-refine} tells us that there exists a common refinement
$\fan''=\{\D|_U \mid \D \in \fan, U \in \mathcal{U}\} = \{\D|_U \mid \D \in \fan', U \in \mathcal{U}\}$ with $\mathcal{U}$ being a sufficient fine affine covering  of $Y$. We have $\pdv(\fan) \cong \pdv(\fan'') \cong \pdv(\fan')$. For $Y$ a complete curve we have aside from polyhedral divisors with affine locus only those with locus $Y$. For reconstructing $\pdv(\fan)$ from the slices we need to remember which of the maximal polyhedra came from divisors with complete loci. To empathise this fact we call maximal polyhedra coming from divisors with complete loci {\em marked}.

As a consequence of theorem~\ref{sec:thm-iso-divisor} we get the following theorem, which is essential for all classification purposes.

\begin{thm}
\label{sec:thm-fan-iso-codim1}
  Let $\fan, \fan'$ be a polyhedral divisors on a curve $Y$.
  We have $\pdv(\fan) \cong \pdv(\fan')$ if and only if there are
  isomorphisms $F:N' \rightarrow N$, $\varphi:Y\rightarrow Y'$.
  such that $F$ induces an isomorphism of marked subdivisions (i.e. respects marks):
  $$\fan_{\varphi(P)} = F(\fan'_P) + \sum_{i=1}^n a^i_P e_i,$$
  Here, $e_1,\ldots,e_n$ is a lattice basis of $N$ and $D_i := \sum_P a_P^i P$ are principal divisors.

\end{thm}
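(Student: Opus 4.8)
The plan is to derive Theorem~\ref{sec:thm-fan-iso-codim1} from Theorem~\ref{sec:thm-iso-divisor} by combining two ingredients: (a) the gluing/refinement freedom expressed in Lemma~\ref{sec:lem-refine}, which lets us pass to a common affine refinement of two divisorial fans having the same slices, and (b) a careful bookkeeping of which maximal polyhedra are \emph{marked}, i.e.\ come from polyhedral divisors with complete locus $Y$, so that the combinatorial data of the slices plus the marks really does recover $\pdv(\fan)$.

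First I would prove the ``if'' direction. Given $F$ and $\varphi$ inducing an isomorphism of marked subdivisions, replace $\fan'$ by $F_*\varphi^*\fan'$, so that we may assume $Y=Y'$, $N=N'$, and $\fan_P = \fan'_P$ for all $P$ as marked subdivisions. Now split $\fan$ (and likewise $\fan'$) into its polyhedral divisors with affine locus and those with locus $Y$. On an affine curve the two fans, having literally the same slices, admit a common refinement by Lemma~\ref{sec:lem-refine}: choose an affine open cover $\mathcal U$ of $Y$ fine enough that every $\D|_U$ has affine locus, and observe $\{\D|_U : \D\in\fan,\ U\in\mathcal U\} = \{\D|_U : \D\in\fan',\ U\in\mathcal U\}$ because a polyhedral divisor with affine locus on a curve is determined by its slices (its coefficients $\Delta_P=\D_P$ are read off directly). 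For the complete-locus divisors, the marking tells us exactly which maximal polyhedron in the slices corresponds to such a $\D$, and since a polyhedral divisor with locus $Y$ is again determined by its coefficients $\Delta_P=\D_P$, the marked maximal cells reconstruct these divisors as well. Hence $\fan$ and $\fan'$ have the same common refinement $\fan''$, giving $\pdv(\fan)\cong\pdv(\fan'')\cong\pdv(\fan')$.

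For the ``only if'' direction, suppose $\Phi\colon\pdv(\fan)\xrightarrow{\ \sim\ }\pdv(\fan')$ is an isomorphism. Since $\Phi$ is $T$-equivariant (up to the lattice automorphism $F$), it carries the open affine $T$-invariant pieces $\pdv(\D)$, $\D\in\fan$, to open affine $T$-invariant subsets of $\pdv(\fan')$; after refining $\fan'$ we may assume these images are again of the form $\pdv(\D')$. Applying Theorem~\ref{sec:thm-iso-divisor} to each matched pair $\pdv(\D)\cong\pdv(\D')$ yields local data $\varphi_\D\colon Y\to Y'$ and $F_\D$ with the stated relation on the $\D_P$. The global quotient map $q\colon\pdv(\fan)\to Y$ (resp.\ $Y'$) is intrinsic, so $\Phi$ descends to a fixed isomorphism $\varphi\colon Y\to Y'$ and a fixed $F$ that must agree with all the $\varphi_\D$, $F_\D$ on overlaps; thus the local relations glue to $\fan_{\varphi(P)} = F(\fan'_P) + \sum_i a^i_P e_i$. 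Finally, $\Phi$ matches an affine-locus piece with an affine-locus piece and a complete-locus piece with a complete-locus piece (completeness of the locus is detected by properness of $q$ over the corresponding stratum, an intrinsic property), so the induced isomorphism of subdivisions respects marks.

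The main obstacle is the bookkeeping in the ``only if'' direction: reconciling the a priori different local isomorphisms $(\varphi_\D,F_\D)$ coming from Theorem~\ref{sec:thm-iso-divisor} into a single global $(\varphi,F)$, and simultaneously handling the ambiguity that a divisorial fan is \emph{not} unique (we are allowed to refine). I would resolve this by working throughout with a sufficiently fine common affine refinement of both fans from the start, so that the matching of open pieces under $\Phi$ is forced, and by using the intrinsic nature of the categorical quotient $q$ to pin down $\varphi$ before invoking Theorem~\ref{sec:thm-iso-divisor} — after that, the uniqueness part of Theorem~\ref{sec:thm-iso-divisor} forces the local data to coincide on overlaps. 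The marking subtlety is then the only genuinely new point beyond the affine case, and it is exactly what the preceding discussion of marked maximal polyhedra was set up to handle.
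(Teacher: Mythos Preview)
Your approach is essentially the paper's: the theorem is stated there without a separate proof, simply as ``a consequence of Theorem~\ref{sec:thm-iso-divisor}'', with the preceding paragraph on common refinements via Lemma~\ref{sec:lem-refine} and on marks serving as the implicit argument for the ``if'' direction --- exactly the ingredients you spell out. Your elaboration of the ``only if'' direction (refining, applying Theorem~\ref{sec:thm-iso-divisor} locally, and gluing the $(\varphi_\D,F_\D)$ via the intrinsic rational quotient) is a reasonable way to fill in what the paper leaves tacit.
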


\begin{cor}
\label{sec:cor-toric}
  For a divisorial fan $\fan$ on a curve $Y$ the associated variety $\pdv(\fan)$ can be obtained from restricting the torus action of a toric variety (we say it is toric for short) if and only if $Y=\PP^1$ and $\fan_P$ is a translation of $\tail \fan$ by a lattice element for all but at most two points $P \in \PP^1$.
\end{cor}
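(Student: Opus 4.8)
The plan is to prove both directions using Theorem~\ref{sec:thm-fan-iso-codim1} together with Remark~\ref{sec:rem-toric}.

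\textbf{The ``only if'' direction.} Suppose $\pdv(\fan)$ is toric, i.e.\ arises from restricting the torus action of a complete toric variety $X_\Sigma$ to a subtorus $T\hookrightarrow T_X$. Following the construction in Remark~\ref{sec:rem-toric}, the base $Y$ is a chosen smooth projective fan $\Sigma'$ refining $P(\Sigma)$; since $\Sigma$ is a complete fan in $N_X$ of full dimension, $P(\Sigma)$ is a complete fan in $N$, which for a curve base means a complete fan in a rank-one lattice, hence $Y=\PP^1$. The prime divisors with nontrivial coefficients are exactly the torus-invariant points $D_\rho$ coming from rays $\rho\in\Sigma^{(1)}$ that map to rays of $\Sigma'$; there are at most two of these on $\PP^1$ up to the principal-divisor ambiguity, and all other slices carry only the tail fan. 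More precisely, I would argue that the slice $\fan_P$ at a point $P$ other than these (at most two) special points has every polyhedron equal to a lattice translate of the corresponding tail, because $\Delta_\rho = P(P'^{-1}(n_\rho))$ contributes a genuine polyhedron only when $n_\rho$ projects onto a ray generator of the curve fan. Modulo the translations $\sum a_P^i e_i$ allowed by Theorem~\ref{sec:thm-fan-iso-codim1}, this leaves at most two non-translated slices.

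\textbf{The ``if'' direction.} Conversely, assume $Y=\PP^1$ and $\fan_P$ is a lattice translate of $\tail\fan$ for all but at most two points, say $0$ and $\infty$. After applying an isomorphism as in Theorem~\ref{sec:thm-fan-iso-codim1} (choosing the principal divisors $D_i$ so that the translations at all ordinary points are absorbed), we may assume $\fan_P=\tail\fan$ for all $P\neq 0,\infty$. Then $\pdv(\fan)$ is described by data concentrated over two points, and I would invoke the second half of Remark~\ref{sec:rem-toric}: view $\PP^1$ as the toric variety $X_\Sigma$ with $\Sigma$ the complete fan with two rays, note that the condition $\D_D=\tail\D$ holds for every non-invariant prime divisor (since only the slices over $0$ and $\infty$ are nontrivial), and form the cones $\sigma_\D=\QQ^+\cdot\bigl(\bigcup_\rho\{n_\rho\}\times\Delta_\rho\bigr)\subset N'\oplus N$. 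Pointedness of these cones follows from bigness (i.e.\ $\deg\D\subsetneq\tail\D$) and convexity from semi-ampleness, as noted there; the face relations in $\fan$ translate into the face relations of the resulting fan $\Sigma_X$, so $\{\sigma_\D\}$ is a genuine fan and $\pdv(\fan)$ is the toric variety $X_{\Sigma_X}$ with its subtorus action.

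\textbf{Main obstacle.} The routine parts are the toric bookkeeping; the delicate point is matching the ``at most two exceptional points'' on both sides with the translation ambiguity of Theorem~\ref{sec:thm-fan-iso-codim1}. Concretely, one must check that degree-zero principal divisors on $\PP^1$ suffice to simultaneously straighten all the slices outside two chosen points — this uses that every degree-zero divisor on $\PP^1$ is principal, so the freedom $D_i=\sum_P a_P^i P$ with $\sum_P a_P^i=0$ is exactly enough to move any finite collection of translation vectors to zero except at two fixed points (where the ``sum to zero'' constraint forces a compensating translation). Handling this normalization cleanly, and verifying that the marked maximal polyhedra (those from complete-locus divisors) correspond correctly under the toric dictionary, is where the real care is needed.
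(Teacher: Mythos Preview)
Your approach is correct and matches the paper's: invoke Theorem~\ref{sec:thm-fan-iso-codim1} to normalise the slices via principal divisors on $\PP^1$, and then use Remark~\ref{sec:rem-toric} as the toric dictionary. The paper's own proof is a two-line sketch of only the ``if'' direction, so your more detailed treatment---in particular of the ``only if'' direction and of why degree-zero divisors on $\PP^1$ suffice to straighten all but two slices---simply fills in what the paper leaves implicit.
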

\begin{proof}
  From the theorem we know that $\pdv(\fan)$ is isomorphic to
  $\pdv(\fan')$, where $\fan'$ has only trivial slices except from $\fan_0$ and $\fan_{\infty}$, but $\pdv(\fan')$ is toric as discussed in remark~\ref{sec:rem-toric}.
\end{proof}

\begin{exmp}
\label{sec:exmp-quadric}
  As an example for a divisorial fan we consider the set $\fan=\{\D_1,\D_2,\D_3,\D_4\}$. Each having locus $\PP^1$. The nontrivial slices of $\fan$ are shown in figure~\ref{fig:quadric}. Since $\dim N=2$ and $\dim Y =1$ the fan gives rise to a complete threefold with $T^2$ action.

 From remark~\ref{sec:rem-toric} we get that every $\pdv(\D_i)$ is isomorphic to toric variety of the cone spanned by 
$ \left(\begin{smallmatrix}
   1\\1\\1
 \end{smallmatrix}\right),
\left(\begin{smallmatrix}
   1\\0\\1
 \end{smallmatrix}\right),
\left(\begin{smallmatrix}
   -1\\-1\\-2
 \end{smallmatrix}\right)
$, which is $\mathbf{A}^3$. In particular $X$ is non-singular. By corollary~\ref{sec:cor-toric} we know that $\pdv(\fan)$ it is not toric, i.e. the toric charts are glued in a non-toric way. 

Note that all polyhedra in the slices have to be marked, since they originate from polyhedral divisors with complete locus.
  \begin{figure}[htbp]
    \subfigure[$\fan_0$]{
      \includegraphics[trim=0 10 0 0, width=3.6cm]{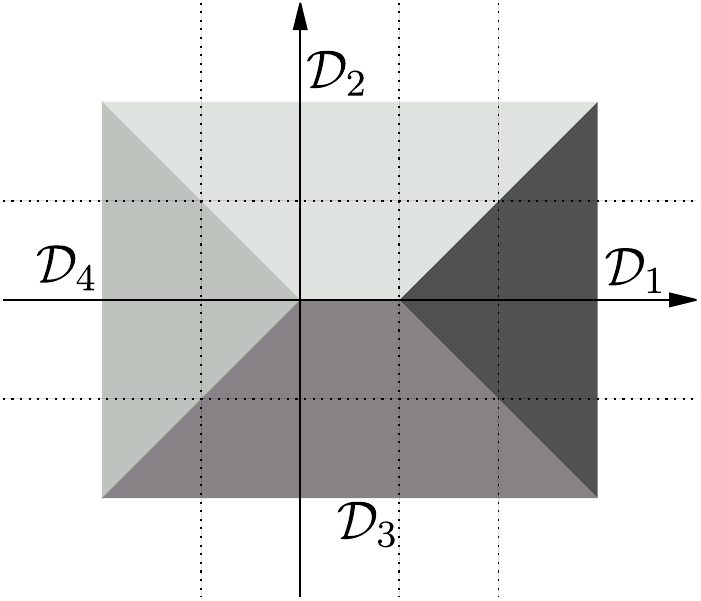}%
      \label{fig:quadric-first}
    }
    \subfigure[$\fan_1$]{
      \includegraphics[trim=0 0 0 20,height=2.9cm]{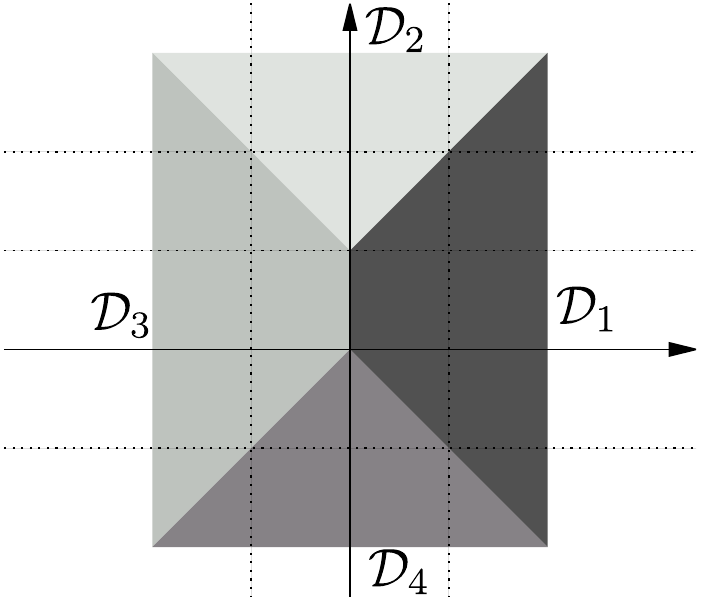}%
    }
    \subfigure[$\fan_\infty$]{
      \includegraphics[trim=0 20 0 0,height=2.7cm]{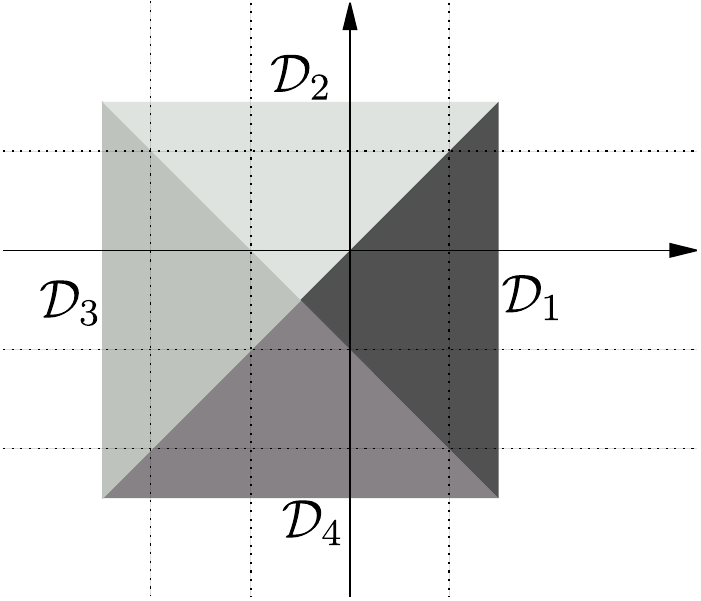}%
      \label{fig:quadric-last}
    }
    \caption{A divisorial fan}
    \label{fig:quadric}
  \end{figure}
\end{exmp}

\section{Invariant divisors}
\label{sec:invariant-divisors}
In this section we recall the description of Cartier and Weil divisors on T-varieties given in \cite{petersen-suess08}.

\label{sec:construction-cartier-div}
Let $\Sigma \subset N_\QQ$ be a complete polyhedral subdivision of $N$ consisting of tailed polyhedra. We consider continuous functions
$h:|\Sigma| \rightarrow \QQ$  which are affine on every polyhedron in $\Sigma$. Let $\Delta \in \Sigma$ a polyhedron with tail 
cone $\delta$, then $h$ induces a linear function $h^\Delta_0$ on $\delta = \tail \Delta$, by defining $h^\Delta_0(v):= h(P+v)-h(P)$ for some $P \in \Delta$. We call $h^\Delta_0$ the linear part of $h|_\Delta$. In this way we get a piecewise linear function $h_0$ on the tail fan $\tail(\Sigma)$.

\begin{defn}
  A piecewise affine function on  $\Sigma$ as above is called an {\em $\QQ$-support function}. It is called (integral) support function if it has integer slope and integer translation. The groups of support functions on $\Sigma$ are denoted by $\SF(\Sigma)$ and
 $\SF_\QQ(\Sigma)$ respectively.
\end{defn}

\begin{defn}
  Let $\fan$ be a divisorial fan on a curve $Y$. For every $P \in Y$ we get a polyhedral subdivision $\fan_P$ consisting of polyhedral coefficients.  We consider $\SF(\fan)$, the groups of ($\QQ$-){\em support functions} on $\fan$, i.e. collections $(h_P)_{P \in Y}\in \prod_{P \in Y}\SF(\fan_P)$ or $\prod_{P \in Y}\SF_\QQ(\fan_P)$ respectively with 
  \begin{enumerate}
  \item all $h_P$ have the same linear part $h_0$.
  \item only for finitely many $P \in Y$ $h_P$ differs from $h_0$.
  \end{enumerate}
\end{defn}

\begin{rem}
    We may restrict an element $h \in \SF(\fan)$ to a sub-fan or even to a polyhedral divisor $\D \in \fan$. The restriction will be denoted by $h|_\D$.
\end{rem}

\begin{defn}
  A support function $h \in \SF(\fan)$ is called principal, if $h =  u + D$ with $u \in M$ and $D=\sum_{P \in Y} a_i P$ is a principal divisor on $Y$. Here $h=u + D$ denotes the support function given by $h_P(v) = \langle u, v \rangle + a_P$.
\end{defn}

\begin{defn}
  A support function is called $T$-Cartier divisor if for every $\D \in \fan$ with complete locus $h|_\D$ is principal.  
  The group of $T$-Cartier divisors is denoted by $\tcadiv(\fan)$.

  The $\QQ$-support functions having a multiple being principal on all $\D$ with complete locus are denoted by $\tcadiv_\QQ(\fan)$ 
\end{defn}

The invariant rational functions on $X=\pdv(\fan)$ are given by $$K(X)^T=\bigoplus_{u \in M} K(Y) \cdot \chi^u.$$ Therefore principal support functions $h=\langle u,\cdot\rangle + \divisor(f)$ encode principal divisors $\divisor(f \cdot \chi^u)$. Given a $T$-Cartier divisor, after refining $\fan$ we can always assume that $h|_\D$ is principal on $\D \in \fan$.  In this way $\tcadiv(\fan)$ corresponds to the group of invariant Cartier divisors on $\pdv(\fan)$. The special condition for $\D$ with complete locus is needed, because they can not be refined and hence every invariant Cartier divisor has to be principal on $\pdv(\D)$ (c.f. \cite[Prop. 3.1.]{petersen-suess08})

Now we are going to describe Weil divisors on $\pdv(\fan)$. In general there are two types of $T$-invariant prime divisors. 
\begin{enumerate}
\item orbit closures of dimension $\dim T=\dim X - 1$. These correspond to 
   pairs $(P,v)$ with $P$ being point on $Y$ and $V$ a vertex of $\fan_P$,\label{item:dimT}
\item families of orbit closures of dimension $\dim T -1$. These correspond to 
  rays $\rho$ of $\sigma$ with $\deg \D \cap \rho = \emptyset$.\label{item:dimT-1}
\end{enumerate}
A ray of $\sigma$ with $\deg \D \cap \rho = \emptyset$ is called an {\em extremal ray}. The set of extremal rays is denoted by $\xrays(\D)$ or $\xrays(\fan)$ respectively if $\D \in \fan$.

\begin{prop}
  \label{sec:prop-cartier2weil}
  Let $h = \sum_P h_P$ be a Cartier divisor on $\D$. The corresponding Weil divisor is then given by 
  $$-\sum_{\rho} h_0 (n_\rho) D_\rho - \sum_{(P,v)} \mu(v) h_P(v) D_{(P,v)},$$
  where $\mu(v)$ is the smallest integer $k \geq 1$ such that $k \cdot v$ is a lattice point, this lattice point is a multiple of the primitive lattice vector: $\mu(v)v=\varepsilon(v) n_v$.
\end{prop}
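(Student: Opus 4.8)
The plan is to reduce the global statement to a local computation on each affine chart $\pdv(\D)$ and, in fact, to the toric dictionary, using the description of $T$-invariant prime divisors recalled just before the proposition. First I would fix a $T$-Cartier divisor $h=(h_P)_{P\in Y}$ on $\D$ and, after refining $\fan$ as permitted by Lemma~\ref{sec:lem-refine} (this does not change $\pdv(\fan)$ and, on charts with complete locus, the principality of $h|_\D$ is exactly the defining condition), assume $h|_\D$ is principal, say $h=\langle u,\cdot\rangle+\divisor(f)$ on $\D$, so that it corresponds to the principal divisor $\divisor(f\cdot\chi^u)$ on $\pdv(\D)$. Since $\divisor$ is additive and any Cartier divisor is locally of this form, it suffices to compute $\ord_{E}(f\cdot\chi^u)$ for each of the two types of $T$-invariant prime divisors $E$: the vertical divisors $D_{(P,v)}$ attached to pairs $(P,v)$ with $v$ a vertex of $\fan_P$, and the horizontal divisors $D_\rho$ attached to extremal rays $\rho$ of $\tail\D=\sigma$.

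For the horizontal divisors $D_\rho$: such a divisor dominates $Y$ and corresponds, generically along the base curve, to a ray of the tail cone, so its valuation on $K(X)^T=\bigoplus_{u}K(Y)\chi^u$ sees only the cocharacter direction $n_\rho$ and is insensitive to $f\in K(Y)$. Concretely, the order of vanishing of $\chi^u$ along $D_\rho$ is $\langle u,n_\rho\rangle=h_0(n_\rho)$, because on $\D$ the linear part $h_0$ of $h$ is exactly $\langle u,\cdot\rangle$; and $\ord_{D_\rho}(f)=0$ since $f$ is a pullback from $Y$ and $D_\rho\to Y$ is dominant. This gives the coefficient $-h_0(n_\rho)$ (the sign being the usual duality sign, as in the toric case $\divisor(\chi^u)=\sum_\rho\langle u,n_\rho\rangle D_\rho$, or rather its negative depending on the chosen orientation of the grading — I would match the convention already fixed in \cite{petersen-suess08}).

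For the vertical divisors $D_{(P,v)}$: here the computation is the genuinely substantive one. Restricting attention to a formal or étale neighbourhood of the fibre over $P$, the situation becomes toric: the chart $\pdv(\D)$ near $D_{(P,v)}$ looks like the affine toric variety attached to a cone whose rays include the ray through $(n_P,v)$ suitably normalised, where $n_P$ is the primitive generator corresponding to $P$. The ray generator is $\mu(v)(n_P,v)/\gcd(\dots)$; the factor $\mu(v)$, the smallest $k\ge1$ with $kv$ integral, enters precisely because the lattice point on the ray is $\mu(v)v=\varepsilon(v)n_v$, so the primitive generator is $n_v$ up to the integer $\varepsilon(v)$, and evaluating the support function on it introduces the factor $\mu(v)$. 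The order of vanishing of $f\cdot\chi^u$ along $D_{(P,v)}$ then equals $\mu(v)$ times the value $h_P(v)=\langle u,v\rangle+\ord_P(f)$ of the (locally affine) support function at the vertex $v$: the $\langle u,v\rangle$ part is the toric contribution of $\chi^u$ and the $\ord_P(f)$ part is the contribution of $f$ pulled back from $Y$ weighted by the multiplicity of the fibre, which is what $\mu(v)$ records. Summing the two types of contributions with the global sign gives the asserted formula; one last point to check is independence of the local trivialisation $(u,f)$, which holds because two such choices differ by a principal support function $0=\langle u',\cdot\rangle+\divisor(f')$ that is identically zero on $\D$, hence contributes $0$ to every coefficient.

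The main obstacle is the vertical computation: making precise, via \cite{MR2207875}, the local toric model near $D_{(P,v)}$ and tracking how the non-reducedness encoded by $\mu(v)$ (equivalently, the denominator of the vertex $v$) turns the naïve value $h_P(v)$ into $\mu(v)h_P(v)$. Everything else — additivity of $\divisor$, the reduction to principal $h|_\D$ by refinement, and the horizontal divisors — is either formal or a direct transcription of the toric formula $\divisor(\chi^u)=-\sum\langle u,n_\rho\rangle D_\rho$.
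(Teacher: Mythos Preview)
The paper does not give a proof of this proposition; it is stated as a result recalled from \cite{petersen-suess08} (see the opening sentence of Section~\ref{sec:invariant-divisors}). Your sketch is correct and is the standard argument: reduce to a principal support function $h=\langle u,\cdot\rangle+\divisor(f)$ on an affine chart, then compute $\ord_E(f\chi^u)$ for each invariant prime divisor $E$. The paper itself uses precisely your vertical formula $\ord_{D_{(P,v)}}(f\chi^u)=\mu(v)\bigl(\ord_P(f)+\langle u,v\rangle\bigr)$ later, in the proof that $\pdv(\D)$ with complete locus on a curve of positive genus is not $\QQ$-factorial, so your identification of the key local computation is exactly on target.
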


\begin{thm}
  \label{sec:thm-canonical-divisor}
  For the canonical class of $X=\pdv(\fan)$ we have
  $$K_X = \sum_{(P,v)} \left((K_Y(P)+1) \mu(v) - 1 \right)\cdot D_{(P,v)} - \sum_{\rho}D_\rho$$
\end{thm}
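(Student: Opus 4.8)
The plan is to realise the canonical divisor of $X = \pdv(\fan)$ as the negative divisor of a suitable invariant rational top-form and then read off its coefficients on the two types of invariant prime divisors using Proposition~\ref{sec:prop-cartier2weil}. First I would reduce to the affine situation: since $K_X$ is a Weil divisor whose coefficient on each invariant prime divisor can be computed on any open invariant chart meeting that divisor, it suffices to prove the formula on each $\pdv(\D)$ for $\D \in \fan$ and check compatibility. On a single chart $X_\D = \pdv(\D)$ with quotient map $q : X_\D \to Y_0$ and $\pi : \loc\D \to Y_0$, I would pick a local coordinate $t$ on the curve $Y$ near a point $P$ together with the characters $\chi^{e_1},\dots,\chi^{e_n}$ dual to a lattice basis $e_1,\dots,e_n$ of $N$; then $\omega = \frac{dt}{t}\wedge\frac{d\chi^{e_1}}{\chi^{e_1}}\wedge\cdots\wedge\frac{d\chi^{e_n}}{\chi^{e_n}}$ (suitably interpreted via $K(X)^T = \bigoplus_u K(Y)\chi^u$) is an invariant rational differential form, and $K_X = -\divisor(\omega)$.

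The second step is to compute $\ord_{D}\,\omega$ for the two kinds of invariant prime divisors. For a divisor $D_\rho$ coming from an extremal ray $\rho$ (type~\ref{item:dimT-1}), the differential $\frac{d\chi^{e_i}}{\chi^{e_i}}$ is the logarithmic form along the torus directions and is \emph{invariant}, hence has no zero or pole along $D_\rho$; the form $\frac{dt}{t}$ pulls back from the base and, since $D_\rho$ is a family of orbit closures lying over all of $Y_0$ (not over a point), contributes nothing either. Thus $\ord_{D_\rho}\omega = 0$, which gives the coefficient $-1$ on $D_\rho$, matching the $-\sum_\rho D_\rho$ term. For a divisor $D_{(P,v)}$ attached to a vertex $v$ of $\fan_P$ (type~\ref{item:dimT}), I would use the description via the polyhedral divisor near the point $P$: here $\divisor(t)$ on $X_\D$ equals $\mu(v)D_{(P,v)} + (\text{other terms})$ up to the contributions of $\divisor(t)$ on $Y$, which accounts for a factor $K_Y(P) = \ord_P K_Y$ via the ramification of $\pi$ and the coefficient of $P$ in a canonical divisor on $Y$; meanwhile each $\frac{d\chi^{e_i}}{\chi^{e_i}}$ contributes to the order along $D_{(P,v)}$ according to the local character of the orbit closure, producing the extra $+\mu(v)-1$. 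Organising these contributions, the coefficient of $D_{(P,v)}$ in $-\divisor(\omega)$ is $(K_Y(P)+1)\mu(v)-1$. Concretely this is cleanest done by pulling $\omega$ back to a toric chart $\pdv(\D)$, which by Remark~\ref{sec:rem-toric} is a toric variety, applying the classical toric formula $K = -\sum D_{\text{toric rays}}$ there, and pushing the result through the splitting $N_X \cong N\oplus N'$; the lattice-length factors $\mu(v)$ appear precisely because the primitive generator of a toric ray over the vertex $v$ projects to $\mu(v)v$.

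The final step is to check independence of the choices and patching: different local coordinates $t$ and different bases $e_i$ change $\omega$ by an invariant rational function $f\chi^u$, hence change $\divisor(\omega)$ by the principal divisor $\divisor(f\chi^u)$, so $K_X$ is well-defined as a class and the local computations glue; here Theorem~\ref{sec:thm-iso-divisor} and the description of principal support functions guarantee that the ambiguity is exactly by principal divisors. The main obstacle I anticipate is the type~\ref{item:dimT} computation — correctly tracking how $\divisor(t)$ decomposes on $\pdv(\D)$ and how the logarithmic torus forms acquire order along $D_{(P,v)}$, i.e. getting the interaction between the base ramification term $K_Y(P)$, the "$+1$" coming from the divisor of $t$ itself, and the lattice index $\mu(v)$ exactly right. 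Routing everything through the toric charts of Remark~\ref{sec:rem-toric} and the toric canonical-divisor formula is, I expect, the way to make this bookkeeping rigorous without an ad hoc local valuation computation.
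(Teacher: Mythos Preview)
The paper does not give its own proof of this theorem: it is one of the results quoted from \cite{petersen-suess08} at the start of Section~\ref{sec:invariant-divisors}. Your overall strategy---write down an invariant rational top-form, compute its order along each invariant prime divisor, and route the local computation through the toric charts of Remark~\ref{sec:rem-toric} so as to invoke the toric formula $K=-\sum D_{\text{rays}}$---is the standard one and is essentially what the cited reference does.

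That said, your execution in the $D_\rho$ case is wrong, not merely sketchy. The torus-invariant logarithmic form $\bigwedge_i \frac{d\chi^{e_i}}{\chi^{e_i}}$ is \emph{not} regular along $D_\rho$: exactly as for a toric boundary divisor, it acquires a simple pole there, so $\ord_{D_\rho}\omega=-1$, not $0$. Your sentence ``$\ord_{D_\rho}\omega=0$, which gives the coefficient $-1$ on $D_\rho$'' is a non-sequitur under either sign convention. Relatedly, $K_X=\divisor(\omega)$, not $-\divisor(\omega)$; with the correct sign and the correct order $-1$ you do recover the $-\sum_\rho D_\rho$ term. A second bookkeeping issue: using $\frac{dt}{t}$ builds in a specific divisor on $Y$ (with simple poles at the zeros and poles of $t$) rather than an arbitrary representative of $K_Y$, so the coefficients $K_Y(P)$ do not appear as directly as you suggest. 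The cleaner choice is to take a rational $1$-form $\omega_Y$ on $Y$ with $\divisor(\omega_Y)=K_Y$, set $\omega=\omega_Y\wedge\bigwedge_i \frac{d\chi^{e_i}}{\chi^{e_i}}$, and then compute locally near $D_{(P,v)}$ in a toric chart: the primitive generator of the corresponding ray in $\ZZ\times N$ is $(\mu(v),\mu(v)v)$, and the toric canonical-divisor formula together with the order $\mu(v)\,K_Y(P)$ coming from $\omega_Y$ yields the coefficient $(K_Y(P)+1)\mu(v)-1$.
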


Now we describe the global sections of $D_h$. Given a support function $h = (h_p)_P$ with linear part $h_0$. We define its associated polytope $$\Box_h := \Box_{h_0}:=\{u \in M \mid  \langle u, v \rangle  \geq  h_0(v) \; \forall_{v \in N}\},$$
and associate a dual function $h^*_P:\Box_h \rightarrow \wdiv_\QQ Y$ via
$$h^*(u):=\sum_P h_P^*(u)P := \sum_P \min_{\text{vertices}}(u-h_P)P.$$ 

\begin{rem} Let $h$ be a concave support function. Every affine piece of $h_P$ corresponds to a pair $(u,-a_u) \subset M \times \ZZ$. $h^*_P$ is defined to be the coarsest concave piecewise affine function with $h^*_P(u)=a_u$.
  
  We can reformulate this in terms of the tropical semi-ring with operation $\oplus=\min,\odot=+$. We may think of $h_P$ as given by tropical polynomials \tropical{\sum_{w \in I} (-a_w) * x^w}, then $\Box_{h}=\conv(I)$ and $h_P^*(w)=a_w$, i.e. $\Graph_{h^*_P}$ is the reflected lower newton boundary of the tropical polynomial for $h_P$.
\end{rem}

\begin{prop}
  \label{prop:global_sections}
  Let $h \in \tcadiv(\fan)$  define a Cartier divisor $D_h$ then
  $$\Gamma(X,\CO(D_h))=\bigoplus_{u \in \Box_h \cap M} \Gamma(Y,h^*(u))$$
\end{prop}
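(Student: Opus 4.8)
The plan is to reduce the statement to the affine case — that is, to a single proper polyhedral divisor $\D \in \fan$ with affine locus — and then to unwind the definition of $\pdv(\D)$ as $\spec$ of the $M$-graded algebra $\bigoplus_u \Gamma(\CO(\D(u)))$. The global sections of $\CO(D_h)$ decompose into $M$-graded pieces $\Gamma(X,\CO(D_h)) = \bigoplus_{u \in M} \Gamma(X,\CO(D_h))_u$, where $\Gamma(X,\CO(D_h))_u$ consists of those invariant rational functions $f \cdot \chi^u$ (with $f \in K(Y)$) whose associated principal divisor $\divisor(f \cdot \chi^u)$ satisfies $\divisor(f\cdot\chi^u) + D_h \geq 0$. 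The task is to translate this inequality of invariant Weil divisors on $X$, via Proposition~\ref{sec:prop-cartier2weil}, into an inequality of divisors on $Y$, and then to observe that the resulting condition is precisely $u \in \Box_h$ together with $f \in \Gamma(Y, h^*(u))$.

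The key steps, in order, are as follows. First, reduce to the affine case: by Lemma~\ref{sec:lem-refine} we may refine $\fan$ by an affine covering $\{U_i\}$ of $Y$ so that each restricted piece has affine locus and $h$ becomes principal on it; global sections on $X$ are then computed as the kernel of the Čech differential between the pieces $\Gamma(q^{-1}(U_i), \CO(D_h))$, and the gluing maps are the obvious inclusions of subspaces of $K(X)^T = \bigoplus_u K(Y)\chi^u$, so it suffices to identify each local piece and check that the intersection over the covering reproduces $\bigoplus_{u}\Gamma(Y, h^*(u))$ (using that $h^*(u) = \sum_P h_P^*(u) P$ is a genuine $\QQ$-divisor on $Y$ and that $\Gamma(Y, -)$ is compatible with the covering). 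Second, for a single $\D$ with affine locus where $h = u_0 + \divisor(f_0)$ is principal, show directly that $\divisor(f\cdot\chi^u) + D_h \geq 0$ holds on $\pdv(\D)$ if and only if, for the vertex-part of Proposition~\ref{sec:prop-cartier2weil}, $\ord_P f \geq -h_P^*(u)$ for all $P$ — after identifying $\min_{v}\langle u - h_P, v\rangle$ evaluated at vertices with $-h_P^*(u)$ via the tropical/Newton-boundary reformulation in the preceding remark — and, for the ray-part, $\langle u, n_\rho\rangle \geq h_0(n_\rho)$ for every extremal ray $\rho$, which says exactly $u \in \Box_{h_0} = \Box_h$ (here one uses that on an affine-locus $\D$ the extremal rays together with $\deg\D$ detect the tail-cone inequalities). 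Third, assemble: $u \in \Box_h \cap M$ selects the admissible degrees, and for each such $u$ the space of admissible $f$ is $\{f \in K(Y) : \divisor(f) + h^*(u) \geq 0\} = \Gamma(Y, h^*(u))$; summing over $u$ and reconciling with the Čech computation from step one finishes it.

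The main obstacle I expect is step one — the bookkeeping of the gluing. One must be careful that the refinement introduces no spurious sections (the complete-locus pieces of $\fan$ cannot be refined, so one has to check that the principality condition built into $\tcadiv(\fan)$ for those pieces is exactly what makes the local computation on $\pdv(\D)$ for complete $\loc\D$ agree with the $h^*(u)$ formula, since there $D_h$ restricted is already principal) and that the intersection of the local section spaces over the covering is computed slice-by-slice, i.e. $\Gamma(Y, h^*(u)) = \bigcap_i \Gamma(U_i, h^*(u)|_{U_i})$, which is just the sheaf property of $\CO_Y(\lfloor h^*(u)\rfloor)$ once one knows $h^*(u)$ is independent of the chosen refinement. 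A secondary, more routine point is the identification $\min_{\text{vertices}}(u - h_P) = -h_P^*(u)$, which is the content of the remark on the reflected lower Newton boundary and should be quoted rather than reproved. Everything else is a direct translation through Proposition~\ref{sec:prop-cartier2weil}.
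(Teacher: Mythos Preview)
The paper does not actually prove this proposition: it is stated without proof in Section~\ref{sec:invariant-divisors}, which begins by announcing that it \emph{recalls} the description of divisors from \cite{petersen-suess08}. So there is no in-paper argument to compare against.

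That said, your approach is the natural one and is essentially how the result is established in the cited reference. The weight decomposition $\Gamma(X,\CO(D_h)) = \bigoplus_{u \in M}\{f \in K(Y): \divisor(f\chi^u)+D_h \geq 0\}\cdot\chi^u$ followed by translating $\divisor(f\chi^u)+D_h \geq 0$ through Proposition~\ref{sec:prop-cartier2weil} into vertex and ray conditions is exactly right, and the identification $\ord_P f \geq -h_P^*(u)$ at vertices is correct.

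One point deserves more care than you give it: showing that weights $u \notin \Box_h$ contribute nothing. The prime divisors of $X$ see only extremal rays and vertices, so the inequality $\langle u,n_\rho\rangle \geq h_0(n_\rho)$ for a \emph{non}-extremal ray $\rho$ is not a direct divisor condition. It must instead be deduced from the vertex conditions together with $\sum_P \ord_P f = 0$: pick the $\D \in \fan$ with complete locus and $\rho \subset \tail\D$, use that $h|_\D = u_0 + \divisor(g)$ is principal, and sum the vertex inequalities at the vertices $v_P \in \D_P$ whose sum lies on $\deg\D \cap \rho$. You gesture at this (``the extremal rays together with $\deg\D$ detect the tail-cone inequalities'') but a reader would want the two-line computation spelled out. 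Everything else in your outline is routine.
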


\begin{defn}
  For a cone $\sigma \in (\tail \fan)^{(n)}$ of maximal dimension in the tail fan and $P \in Y$ we get exactly one polyhedron $\Delta^\sigma_P \in \fan_P$ having tail $\sigma$.
  
  For a given concave support function $h = \sum h_P P$ 
  We have 
  $$h_P|\Delta^\sigma_P = \langle \cdot, u^h(\sigma) \rangle + a^h_P(\sigma).$$
  The constant part gives rise to a divisor on $Y$:
  $$h|_{\sigma}(0) := \sum_P a^h_P(\sigma) P.$$
\end{defn}

\begin{thm}
\label{sec:thm-ample}
  A $T$-Cartier divisor $D_h$ is semi-ample, iff all $h_P$ are concave and $-h|_\sigma(0)$ is semi-ample for all unmarked tail cones $\sigma$, i.e. $\deg -h|_\sigma(0) = -\sum_P a^h_P (\sigma) > 0$ or  a 
multiple of $-h|_\sigma(0)$ is principal.
\end{thm}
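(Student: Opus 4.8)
The plan is to reduce the statement to the toric situation via Proposition~\ref{prop:global_sections}, which expresses the global sections of multiples of $D_h$ as a sum over lattice points of $\Box_h$ of the spaces $\Gamma(Y, (mh)^*(u))$. Semi-ampleness of $D_h$ means that for some $m>0$ the linear system $|mD_h|$ is base-point free, equivalently that $\bigoplus_m \Gamma(X,\CO(mD_h))$ is generated in a way that separates no orbit; since everything is $T$-equivariant and $X$ is covered by the affine charts $\pdv(\D)$, $D_h$ is semi-ample if and only if each restriction $D_{h|_\D}$ is semi-ample on $\pdv(\D)$ and the sections glue. So the first step is to localise: fix $\D \in \fan$ and ask when $D_{h|_\D}$ is semi-ample on the affine variety $\pdv(\D)$.

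The second step treats a single $\D$. Here semi-ampleness is governed by two phenomena. Concavity of the $h_P$ is exactly the condition that the support function $h$ pulls back to a concave (convex, in the toric sign convention) function, which by the usual toric dictionary — transported through the construction of Remark~\ref{sec:rem-toric} relating $\pdv(\D)$ to the toric variety of the cone $\sigma_\D$ — is necessary and sufficient for $D_h$ to be generated by its sections \emph{along the curve directions}, i.e. for the sheaf to be globally generated at the orbits $D_{(P,v)}$ corresponding to vertices. If some $h_P$ fails to be concave, one produces an orbit closure on which $D_h$ restricts to a non-semi-ample divisor, exactly as in the toric case. Once all $h_P$ are concave, what remains is to check semi-ampleness along the divisors $D_\rho$ coming from the extremal rays, equivalently the behaviour over the maximal tail cones $\sigma$. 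For such a $\sigma$, on the polyhedron $\Delta^\sigma_P$ the function $h_P$ is affine with constant part $a^h_P(\sigma)$, so restricting $h$ to the "$\sigma$-part" isolates the divisor $h|_\sigma(0) = \sum_P a^h_P(\sigma) P$ on $Y$; this is precisely the divisor whose sections control whether one can move $D_h$ off $D_\rho$. If $\sigma$ is marked — i.e. comes from a polyhedral divisor with complete locus — there is no condition beyond principality, because $h|_\D$ is already required to be principal there. If $\sigma$ is unmarked, base-point freeness along $D_\rho$ translates into the requirement that $Y \cong \PP^1$ carries enough sections of $-h|_\sigma(0)$, i.e. that $-h|_\sigma(0)$ is semi-ample on the curve $Y$; on a curve this is the stated numerical condition $\deg(-h|_\sigma(0)) = -\sum_P a^h_P(\sigma) > 0$, or that a multiple of $-h|_\sigma(0)$ is principal (the degree-$0$, torsion case).

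The third step is to glue: having characterised semi-ampleness on each chart $\pdv(\D)$, one checks that the global statement is the conjunction of the local ones. Concavity of $(h_P)$ is a condition on the whole support function and is the same on every chart; the extremal-ray conditions range over all unmarked maximal tail cones, which is exactly the list $(\tail\fan)^{(n)}$ minus the marked ones; and since a section that is base-point free locally on each $T$-invariant affine chart is base-point free globally (the charts cover $X$, and $T$-invariant sections suffice by equivariance), the local conditions assemble to the global one. I expect the main obstacle to be the careful bookkeeping in step two: making the passage through Remark~\ref{sec:rem-toric} precise enough that the toric criterion for base-point-freeness of a $T$-Cartier divisor — concavity plus a positivity condition on the "restriction to each ray" — can be read off, and in particular correctly identifying $-h|_\sigma(0)$ as the divisor on $Y$ whose positivity is the obstruction along $D_\rho$, with the right sign and the right treatment of the marked (complete-locus) cones where no extra condition is imposed.
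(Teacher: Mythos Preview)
The paper does not supply a proof of this theorem: Section~\ref{sec:invariant-divisors} explicitly \emph{recalls} the divisor theory from \cite{petersen-suess08}, and Theorem~\ref{sec:thm-ample} is stated there without argument. So there is no ``paper's own proof'' to compare against; what follows is an assessment of your sketch on its merits.

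Your step~1 contains a genuine gap. You propose to ``localise'' by asking when $D_{h|_\D}$ is semi-ample on the \emph{affine} variety $\pdv(\D)$. But on an affine scheme every invertible sheaf is generated by its global sections, so this question is trivial and carries no information; the phrase ``and the sections glue'' does not repair this, because semi-ampleness of $D_h$ on $X$ asks that the \emph{global} sections $\Gamma(X,\CO(mD_h))$ generate $\CO(mD_h)$ at each point---it is the point, not the section module, that one localises. The right first step is to keep Proposition~\ref{prop:global_sections} global and ask, for each invariant point (indexed by a pair $(P,v)$ or by an extremal ray $\rho$), whether some homogeneous section $f\chi^u$ with $u\in m\Box_h$ and $f\in\Gamma(Y,(mh)^*(u))$ is a local generator there.

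Your step~2 leans on Remark~\ref{sec:rem-toric} to import the toric base-point-free criterion, but that remark only applies when $Y$ is toric and all nontrivial coefficients of $\fan$ sit over invariant prime divisors; for a general curve $Y$ (or even $\PP^1$ with more than two special points) the chart $\pdv(\D)$ is not toric, so the ``usual toric dictionary'' is unavailable as stated. You also blur the distinction between \emph{extremal rays} $\rho$ (which index prime divisors $D_\rho$) and \emph{unmarked maximal tail cones} $\sigma$ (which index charts $\pdv(\D)$ with affine locus). The quantity $-h|_\sigma(0)$ is attached to a full-dimensional $\sigma$, not to a ray: it records the constant term of $h$ on the polyhedra $\Delta^\sigma_P$, and its role is that for the unique weight $u=u^h(\sigma)\in\Box_h$ one has $(mh)^*(mu)=-m\,h|_\sigma(0)$; a nonvanishing section in this degree is exactly what trivialises $\CO(mD_h)$ on the chart with tail $\sigma$. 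For marked $\sigma$ the definition of $T$-Cartier already forces $h|_\D$ principal, so $-h|_\sigma(0)$ is principal and no extra condition appears---this is where the marked/unmarked dichotomy enters, not via extremal rays. Once the bookkeeping is straightened out this way, the concavity of $h_P$ is what guarantees that $u^h(\sigma)$ really lies in $\Box_h$ and that the sections in the various weights cover all orbits $(P,v)$; your intuition here is correct, but the argument needs to go through $h^*$ and the orbit--vertex correspondence rather than through a toric chart.
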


\begin{cor}
  A $T$-Cartier divisor $D_h$ is ample, iff all $h_P$ are strictly concave and $-h|_\sigma(0)$ is ample for all unmarked tail cones $\sigma$, i.e. $\deg -h|_\sigma(0) = -\sum_P a^h_P (\sigma) > 0$.
\end{cor}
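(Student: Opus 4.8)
The plan is to derive the ampleness criterion from the semi-ampleness criterion in Theorem~\ref{sec:thm-ample} by combining it with the standard fact that ampleness is detected by positive intersection with curves, exactly as in the toric case. First I would recall that on a complete variety a Cartier divisor is ample iff it is semi-ample and the associated morphism to projective space is finite, equivalently (by the toric–combinatorial philosophy underlying this paper) iff the support function is ``strictly'' convex in the appropriate sense. Concretely, $D_h$ is ample iff $D_h$ is semi-ample and $D_h$ has strictly positive degree on every invariant curve in $X$; so I would enumerate the invariant curves.

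The invariant curves on $X=\pdv(\fan)$ fall, dually to the two types of invariant prime divisors listed before Proposition~\ref{sec:prop-cartier2weil}, into finitely many combinatorial families: those lying over a point $P\in Y$ and corresponding to an edge of the subdivision $\fan_P$ (i.e. a one–dimensional polyhedron joining two vertices, or a vertex together with a ray), and those that surject onto $Y$ and correspond to a maximal cone $\sigma\in(\tail\fan)^{(n)}$ whose associated polyhedra $\Delta^\sigma_P$ are vertices for all but finitely many $P$. Intersecting $D_h$ with a curve of the first kind produces, via Proposition~\ref{sec:prop-cartier2weil} and the local affine description, a quantity governed by the difference of slopes of $h_P$ across the corresponding edge; positivity of all these is precisely the statement that every $h_P$ is \emph{strictly} concave. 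Intersecting $D_h$ with a curve of the second kind — the ``horizontal'' family over $Y$ attached to a maximal tail cone $\sigma$ — yields, again by Proposition~\ref{prop:global_sections} and the definition of $h|_\sigma(0)$, the degree of the divisor $-h|_\sigma(0)=-\sum_P a^h_P(\sigma)P$ on $Y$; requiring this to be strictly positive for every unmarked $\sigma$ is the second condition. For marked tail cones there is simply no such horizontal curve (the corresponding locus is complete in the $Y$-direction already), so no further condition arises, which is why only unmarked cones appear.

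Having set this up, the proof proceeds in three steps. Step one: show $D_h$ ample $\Rightarrow$ the two stated conditions, by restricting $D_h$ to each of the invariant curves above and using that an ample divisor has positive degree on every curve; here I need the intersection-number computations, which the paper has prepared via Proposition~\ref{sec:prop-cartier2weil} and Theorem~\ref{sec:thm-ample}, together with the observation that the degree of $D_h$ on a curve of the second kind equals $\deg(-h|_\sigma(0))$. Step two: conversely, assume strict concavity of all $h_P$ and $\deg(-h|_\sigma(0))>0$ for all unmarked $\sigma$. Strict concavity in particular gives concavity, and $\deg(-h|_\sigma(0))>0$ in particular gives $\ge 0$ with a principal multiple excluded, so Theorem~\ref{sec:thm-ample} already yields semi-ampleness; let $\phi\colon X\to X'$ be the associated semi-ample morphism. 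Step three: show $\phi$ is finite. For this I would argue that $\phi$ contracts a curve $C$ only if $D_h\cdot C=0$, and then check that the strict inequalities just imposed force $D_h\cdot C>0$ on every invariant curve, hence — since the contracted locus, if nonempty, would be torus-invariant and thus contain an invariant curve — $\phi$ contracts nothing and is therefore finite; a finite morphism pulls back an ample divisor to an ample one, and $D_h=\phi^*(\text{ample})$, so $D_h$ is ample.

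The main obstacle is Step three, or equivalently the ``only one direction of Theorem~\ref{sec:thm-ample} needs strengthening'' part: one must be sure that strict concavity of the fibre functions \emph{together with} strict positivity on the horizontal families is enough to separate all invariant curves, i.e. that there is no ``mixed'' invariant curve whose positivity is not captured by these two numerical conditions. Concretely this amounts to checking that every torus-invariant irreducible curve in $X$ is numerically equivalent to (a positive combination of) the vertical edge-curves and the horizontal $\sigma$-curves enumerated above, which in turn rests on the affine-chart analysis from Lemma~\ref{sec:lem-refine} and the toric structure of each $\pdv(\D)$ from Remark~\ref{sec:rem-toric}: on each toric chart the claim is the classical toric ampleness criterion, and one glues. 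I expect the reduction to the toric charts to be the technically delicate point, while the formal finite-morphism argument and the intersection computations are routine given the earlier results.
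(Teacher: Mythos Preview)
The paper gives no proof of this corollary; it is placed directly after Theorem~\ref{sec:thm-ample} and is evidently meant to follow by the routine passage from weak to strict inequalities, details being left to the reader or to \cite{petersen-suess08}. Your proposal therefore supplies a full argument where the paper has none, and the argument is correct. The enumeration of $T$-invariant curves into vertical edge-curves in each slice $\fan_P$ and horizontal section-curves over $Y$ attached to the unmarked maximal tail cones is accurate, as is the remark that a marked cone contributes no horizontal curve (the would-be section collapses to a single fixed point in the chart $\pdv(\D_\sigma)$). The finite-morphism step is sound: the semi-ample morphism is $T$-equivariant, so its exceptional locus is closed and $T$-invariant and hence, if nonempty, contains a $T$-invariant curve on which $D_h$ would have degree zero, contradicting the strict inequalities. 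Your stated ``main obstacle'' is in fact not one: in complexity one every $T$-invariant curve is of one of the two kinds you list, so no mixed case arises and the gluing over toric charts via Lemma~\ref{sec:lem-refine} and Remark~\ref{sec:rem-toric} goes through without difficulty.
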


\begin{cor}
  A T-Cartier divisor $D_h$ is nef iff $h$ is concave and $\deg h^*(u) \geq 0$ (i.e. $h^*(u)$ is nef) for every vertex $u$ of $\Box_h$.
\end{cor}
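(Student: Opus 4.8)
The plan is to derive the nef criterion from the semi-ampleness criterion of Theorem~\ref{sec:thm-ample} together with the description of global sections in Proposition~\ref{prop:global_sections}. Recall that a Cartier divisor is nef precisely when every multiple is limit-free, equivalently (since we are on a projective variety and may pass to the rational setting) when $D_h$ lies in the closure of the semi-ample cone; more concretely, $D_h$ is nef iff for every curve $C$ we have $D_h\cdot C\geq 0$. The key idea is that the invariant curves fall into two families mirroring the two types of invariant prime divisors, and testing semi-ampleness of $D_h$ amounts to testing nonnegativity against these curves with strict inequality replaced by $\geq$.

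First I would observe that $h$ concave is necessary: if some $h_P$ fails to be concave, then already $\Box_h$ or the dual function degenerates, and one produces an invariant curve inside the orbit closure $D_{(P,v)}$ at a non-concave vertex $v$ against which $D_h$ is negative — this is the same local computation underlying Theorem~\ref{sec:thm-ample}, only now we need the non-strict failure. Granting concavity, I would then invoke Proposition~\ref{prop:global_sections}: the linear system $|mD_h|$ for large $m$ has sections indexed by lattice points of $m\Box_h$, with the fibrewise behaviour governed by $h^*(u)$. Nefness is equivalent to asking that for every vertex $u$ of $\Box_h$ the $\QQ$-divisor $h^*(u)$ on the curve $Y$ be nef, i.e.\ $\deg h^*(u)\geq 0$: on a curve nef just means nonnegative degree. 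The point is that $h^*$ is a concave piecewise-affine function $\Box_h\to\wdiv_\QQ Y$, so its degree $\deg h^*(u)$ is a concave function of $u$ and therefore attains its minimum over the polytope $\Box_h$ at a vertex. Hence checking the vertices suffices, and conversely if $\deg h^*(u)\geq 0$ at every vertex then $h^*(u)$ is (after scaling) base-point free for every $u$, the relevant pushforward sheaves on $Y$ are globally generated, and $D_h$ becomes base-point free after a twist, hence nef.

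Concretely, the forward direction: if $D_h$ is nef, intersect it with the invariant curve $C_u$ corresponding to a vertex $u$ of $\Box_h$ (the closure of a one-parameter family of orbits mapping onto $Y$ whose class is read off from the section $\chi^u$); one computes $D_h\cdot C_u=\deg h^*(u)$, so nefness forces $\deg h^*(u)\geq 0$. For the reverse direction: concavity of $h$ gives concavity of $u\mapsto h^*(u)$ and of $u\mapsto\deg h^*(u)$, so $\deg h^*(u)\geq 0$ at all vertices propagates to all of $\Box_h\cap M$; then $\Gamma(Y,mh^*(u))$ generates $mh^*(u)$ for $m\gg 0$ at every $u$, and by Proposition~\ref{prop:global_sections} the sections of $\CO(mD_h)$ generate it away from the possible base locus along the $D_\rho$, which is handled exactly as in the semi-ample criterion since in the nef case we only need $\geq$ in place of $>$. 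Taking $m$ large and using that a divisor all of whose large multiples are base-point free is nef finishes the argument.

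The main obstacle I anticipate is the clean identification of $D_h\cdot C_u$ with $\deg h^*(u)$ and the verification that $h^*$ is genuinely concave as a $\wdiv_\QQ Y$-valued function — this rests on the local description of $h_P^*$ as the reflected lower Newton boundary of the tropical polynomial attached to $h_P$, and one must check that taking the $\min$ over vertices pointwise in $P$ preserves concavity in $u$ uniformly. Once concavity is in hand, the reduction to vertices is the routine fact that a concave function on a polytope is minimised at a vertex, and the rest is bookkeeping with the already-established semi-ampleness theorem.
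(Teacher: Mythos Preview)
Your approach diverges substantially from the paper's, and the reverse implication contains a genuine gap.

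The paper argues by perturbation: pull back to a projective $T$-variety carrying an ample $D_{h'}$, and use that $D_h$ is nef if and only if $D_{h+\varepsilon h'}$ is ample for every $\varepsilon>0$. One then applies the ampleness corollary to $h+\varepsilon h'$ and lets $\varepsilon\to 0$: strict concavity of $h'$ forces $h+\varepsilon h'$ strictly concave iff $h$ is concave, and $\deg\bigl(-(h+\varepsilon h')|_\sigma(0)\bigr)>0$ for all $\varepsilon>0$ iff $\deg\bigl(-h|_\sigma(0)\bigr)\geq 0$, which is exactly $\deg h^*(u)\geq 0$ at the vertex $u=u^h(\sigma)$. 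No intersection numbers, no base loci.

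Your reverse direction breaks on curves $Y$ of positive genus. You claim that $\deg h^*(u)\geq 0$ for all $u\in\Box_h\cap M$ implies $\Gamma(Y,\CO(mh^*(u)))$ generates $\CO(mh^*(u))$ for $m\gg 0$. This is false: if $\deg h^*(u)=0$ but $h^*(u)$ is non-torsion in $\pic(Y)$, then $mh^*(u)$ has no sections for any $m$, so no multiple of $D_h$ is base-point free along the corresponding stratum. Such a $D_h$ is nef but not semi-ample, and your argument cannot produce it. The perturbation trick sidesteps this precisely because $D_{h+\varepsilon h'}$ \emph{is} ample even when $D_h$ is only nef.

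Your forward direction is plausible in spirit but rests on the identification $D_h\cdot C_u=\deg h^*(u)$ for suitable invariant curves $C_u$, which you flag as an obstacle and which is not available from the results quoted in the paper. It can be extracted from Proposition~\ref{prop:intersection-numbers} via polarisation, but that is more work than the two-line perturbation argument.
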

\begin{proof}
  After pulling back $D_h$ via an equivariant, birational and proper morphism we may assume that $D_h$ lives on a projective T-variety, i.e. there exists 
an ample $D_{h'}$. Now one checks easily that $D_h + \varepsilon D_{h'}= D_{h+\varepsilon h'}$ is ample for all $\varepsilon>0$ if and only if $h$ fulfils our preconditions.
\end{proof}

Using proposition~\ref{prop:global_sections} to determine $\dim \Gamma(X,D_h)$ we are also able to calculate intersection numbers. The following result was already obtained by Timashev  even in higher generality \cite{1034.14004}.

\begin{defn}
  For a function $h^*:\Box\rightarrow \wdiv_\QQ Y$ we define its {\em volume} to be
  $$\vol h^* := \sum_P \int_{\Box} h^*_P \vol_{M_\RR}$$

 For $h^*_1,\ldots, h^*_k$ we define their {\em mixed volume} by
  $$V(h^*_1,\ldots, h^*_k):= \sum_{i=1}^k (-1)^{i-1} \sum_{1\leq j_1 \leq \ldots j_i \leq  k} \vol(h^*_{j_1} + \cdots + h^*_{j_i})$$
\end{defn}

\begin{prop}\label{prop:intersection-numbers}
  Let $\fan$ be a divisorial fan on a curve with slices in $N \cong \ZZ^n$.
  \begin{enumerate}
  \item  If $D_h$ is nef, for the self-intersection number we get \label{item:prop-vol-vol}
    $$(D_h)^{(n+1)} = (n+1)!\vol h^*.$$
  \item  Let $h_1, \ldots, h_{n+1}$ define semi-ample divisors $D_i$ on $X(\fan)$. Then $$(D_1 \cdots D_{n+1}) = (n+1)!V(h^*_1, \ldots, h^*_{n+1}).$$\label{item:prop-vol-mixed}
  \end{enumerate}
\end{prop}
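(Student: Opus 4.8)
The plan is to reduce the general statement to the self-intersection case~\eqref{item:prop-vol-mixed} via the standard polarization identity, and then to prove~\eqref{item:prop-vol-vol} by computing the leading term of the Hilbert polynomial of $D_h$ using Proposition~\ref{prop:global_sections}. For the polarization step, I would note that both sides of~\eqref{item:prop-vol-mixed} are symmetric multilinear in the $h_i^*$ (resp.\ the $D_i$): the intersection form is multilinear by general intersection theory, and $\vol$ is a ``polynomial'' in $h^*$ of degree $n+1$ in the sense that $\vol(h_1^* + \cdots + h_i^*)$ expands into a sum of mixed-volume terms, exactly mirroring the inclusion-exclusion defining $V$. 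So if part~\eqref{item:prop-vol-vol} holds for every nef class, applying it to $D_1 + \cdots + D_{n+1}$ and expanding both sides yields part~\eqref{item:prop-vol-mixed}; one only needs that a sum of semi-ample divisors is nef (clear) and that $(h_1 + \cdots + h_i)^* = h_1^* + \cdots + h_i^*$, which follows since $h^*$ is additive in $h$ (the $\min_{\text{vertices}}$ construction is additive on concave functions, being a Legendre-type transform). Thus the real content is~\eqref{item:prop-vol-vol}.

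For~\eqref{item:prop-vol-vol}, since $D_h$ is nef we have $(D_h)^{n+1} = \lim_{m\to\infty} \frac{(n+1)!}{m^{n+1}} \dim \Gamma(X, \CO(mD_h))$. By Proposition~\ref{prop:global_sections},
$$\dim \Gamma(X, \CO(mD_h)) = \sum_{u \in \Box_{mh} \cap M} \dim \Gamma(Y, (mh)^*(u)) = \sum_{u \in m\Box_h \cap M} \dim \Gamma(Y, m h^*(u/m)),$$
using $\Box_{mh} = m\Box_h$ and $(mh)^*(u) = m\, h^*(u/m)$ (both from the scaling behaviour of the Legendre transform). For each fixed $u$ in the relative interior of $\Box_h$, the divisor $h^*(u) = \sum_P h_P^*(u) P$ on the curve $Y$ has degree $\deg h^*(u) > 0$ for $m$ large (nefness of $h^*$ at vertices plus convexity pushes the interior degrees strictly positive after scaling, or one handles degree-zero contributions as a lower-order term), so by Riemann-Roch on the curve $\dim \Gamma(Y, m h^*(u/m)) = m\,\deg h^*(u/m) + O(1)$. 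Summing over the roughly $\vol_{M_\RR}(m\Box_h) = m^n \vol(\Box_h)$ lattice points $u$ and replacing the sum by an integral (Euler-Maclaurin / Riemann sum) gives
$$\dim \Gamma(X, \CO(mD_h)) = m^{n+1} \sum_P \int_{\Box_h} h_P^* \, \vol_{M_\RR} + O(m^n) = m^{n+1}\vol h^* + O(m^n),$$
and multiplying by $(n+1)!/m^{n+1}$ and letting $m\to\infty$ yields the claim.

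The main obstacle I expect is the careful bookkeeping of the boundary and lower-dimensional strata of $\Box_h$: lattice points $u$ lying on faces of $\Box_h$ where $\deg h^*(u)$ vanishes or is negative contribute $h^0 = 0$ or only an $h^1$-correction, and one must check these contribute only $O(m^n)$ so they do not affect the leading term — this is where the nef hypothesis (concavity of $h$ and $\deg h^*(u)\ge 0$ at the vertices) is genuinely used, via the corollary characterizing nef divisors. A secondary technical point is justifying the passage from the lattice-point sum $\sum_{u \in m\Box_h \cap M}\deg h^*(u/m)$ to the integral $m^n\int_{\Box_h}\deg h^* $ with error $O(m^n)$ rather than $O(m^{n+1})$; this is a standard Riemann-sum estimate for a piecewise-linear integrand on a rational polytope, but it should be stated. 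Everything else — the scaling identities for $\Box$ and $h^*$, additivity of $h^*$, Riemann-Roch on $Y$ — is routine. One could alternatively cite Timashev~\cite{1034.14004} for the formula and only indicate the translation of notation, but the Hilbert-polynomial argument above is self-contained given the results already established in this section.
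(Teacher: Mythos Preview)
Your approach is essentially the paper's: reduce part~(\ref{item:prop-vol-mixed}) to part~(\ref{item:prop-vol-vol}) by polarization and multilinearity of the intersection form, then compute the leading asymptotic of $h^0(mD_h)$ via Proposition~\ref{prop:global_sections}, Riemann--Roch on the curve $Y$, and a Riemann-sum passage to the integral $\int_{\Box_h}\deg h^*$. The only cosmetic difference is that the paper first writes the self-intersection as a limit of Euler characteristics and then invokes that higher cohomology of nef divisors on a projective variety is asymptotically negligible, whereas you quote the nef-volume formula directly; and the boundary/rounding obstacle you anticipate is exactly what the paper handles by a squeeze, using the uniform curve bounds $\deg\lfloor D\rfloor - g(Y) + 1 \le h^0(Y,D) \le \deg\lfloor D\rfloor + 1$ to get matching upper and lower Riemann sums, so that no separate analysis of the faces of $\Box_h$ is required.
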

\begin{proof}
 If we apply (\ref{item:prop-vol-vol}) to every sum of divisors from $D_1,\ldots,D_{m+1}$ we get (\ref{item:prop-vol-mixed}) by the multi-linearity and symmetry of intersection numbers.
  
  To prove (\ref{item:prop-vol-vol}) we first recall that
$(D_h)^{m+1} = \lim_{\nu \rightarrow \infty} \frac{(m+1)!}{\nu^{m+1}} \chi(X,\CO(\nu D_h)),$
but for projective $X:=\pdv(\fan)$ and nef divisors the ranks of higher cohomology groups are asymptotically irrelevant \cite[Thm. 6.7.]{MR1919457} so we get
  $$(D_h)^{m+1} = \lim_{\nu \rightarrow \infty} \frac{(m+1)!}{\nu^{m+1}} h^0(X,\CO(\nu D_h)).$$

Note that $(\nu h)^*(u)=\nu \cdot h^*(\frac{1}{\nu}u)$. Now we can bound $h^0$ by
\begin{equation}
  \label{eq:h-bound}
\sum_{u \in \nu \Box_h \cap M} \left( \deg \lfloor \nu h^*\left({\textstyle \frac{1}{\nu}} u \right)\rfloor - g(Y) +1\right) \leq h^0(\CO(\nu D_h)) \leq \sum_{u \in \nu \Box_h \cap M} \deg \lfloor \nu h^*\left({\textstyle \frac{1}{\nu}}  u\right) \rfloor + 1.
\end{equation}

We have
\begin{eqnarray*}
   \lim_{\nu \rightarrow \infty} \frac{(m+1)!}{\nu^{m+1}} \sum_{u \in \nu \Box_h \cap M} \deg \lfloor \nu h^*\left({\textstyle \frac{1}{\nu}} u\right) \rfloor
&=& \lim_{\nu \rightarrow \infty} \frac{(m+1)!}{\nu^{m}} \sum_{u \in \Box_h \cap \frac{1}{\nu}M} \frac{1}{\nu} \deg \lfloor \nu h^*(u)\rfloor \\
&=& (m+1)! \int_{\Box_h} h^* \vol_{M_\RR},
\end{eqnarray*}
and  
$$\lim_{\nu\rightarrow \infty} \frac{1}{\nu^{m+1}} \sum_{u \in \nu \Box_h \cap M} (g - 1) = (g-1)\lim_{\nu\rightarrow \infty} \frac{\# (\nu\cdot \Box_h \cap M)}{\nu^{m+1}}=0$$
holds.

So if we pass to the limit in (\ref{eq:h-bound}) the term in the middle has to converge to $\vol h^*$.
\end{proof}

\begin{cor}
  A divisor $D_h$ is big if and only if $\vol h^* > 0$.
\end{cor}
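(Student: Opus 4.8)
The plan is to deduce this corollary directly from Proposition~\ref{prop:intersection-numbers}, part~(\ref{item:prop-vol-vol}), together with the asymptotic estimate~\eqref{eq:h-bound} appearing in its proof. Recall that $D_h$ is big precisely when $h^0(X,\CO(\nu D_h))$ grows like $c\cdot\nu^{n+1}$ for some $c>0$, equivalently when $\limsup_{\nu\to\infty}\nu^{-(n+1)}h^0(X,\CO(\nu D_h))>0$. The content of the corollary is thus that this growth rate is governed by $\vol h^*$ exactly as in the nef case, even when $D_h$ is only assumed Cartier.

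First I would reduce to the situation where $h$ is concave. If $D_h$ is big, then so is its pullback under any equivariant proper birational morphism, and conversely bigness descends; moreover $\vol h^*$ is a birational invariant in the appropriate sense since the volume integral is computed on the same polytope $\Box_h$ (the extra $D_{(P,v)}$ introduced by refining $Y$ or subdividing the slices contribute nothing to $\Box_h$ or to $h^*$ up to the identifications of Theorem~\ref{sec:thm-fan-iso-codim1}). After such a modification we may assume $X$ is projective. Then I would invoke the standard fact that $D_h$ is big iff $D_h\sim A+E$ with $A$ ample and $E$ effective; combined with Theorem~\ref{sec:thm-ample} this lets us replace $h$ by a concave support function $\tilde h$ with the same volume growth, since adding a small ample $\varepsilon h'$ (which is strictly concave) and passing to the concave envelope changes $\vol h^*$ continuously. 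Alternatively, and more cleanly, one observes that $h^0(X,\CO(\nu D_h))$ only depends on the global sections, which by Proposition~\ref{prop:global_sections} are $\bigoplus_{u\in\Box_h\cap M}\Gamma(Y,h^*(u))$, and this expression already only sees the concave envelope of $h$.

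With $h$ concave, the two-sided bound~\eqref{eq:h-bound} from the proof of Proposition~\ref{prop:intersection-numbers} applies verbatim: it shows
$$\lim_{\nu\to\infty}\frac{(n+1)!}{\nu^{n+1}}h^0(X,\CO(\nu D_h))=(n+1)!\vol h^*,$$
where the lower bound uses $-g(Y)+1$ on each $\Gamma(Y,\lfloor\nu h^*(\tfrac1\nu u)\rfloor)$ via Riemann--Roch, which requires knowing these degrees are eventually nonnegative; this holds for all $u$ in the interior of $\Box_h$ since $h^*$ is concave, and the boundary contributes a lower-order term by the point-counting estimate already recorded. Hence $D_h$ is big iff this limit is positive iff $\vol h^*>0$, which is the claim.

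The main obstacle is the reduction to the concave case together with the claim that $\vol h^*$ is unchanged by it: one must argue carefully that passing to the concave envelope of $h$ does not alter $\Gamma(Y,h^*(u))$ for $u\in\Box_h$, and that the birational modification used to achieve projectivity does not change $\vol h^*$. The first point follows from the description of $h^*_P$ as the reflected lower Newton boundary — nonconcave pieces of $h_P$ lie strictly above this boundary and thus never contribute to $\min_{\text{vertices}}(u-h_P)$ — and the second from the fact that the volume integral is taken over $\Box_h$ with respect to the fixed lattice $M$, which is preserved by the isomorphisms of Theorem~\ref{sec:thm-fan-iso-codim1}. Everything else is a direct reuse of the limit computation already carried out for Proposition~\ref{prop:intersection-numbers}.
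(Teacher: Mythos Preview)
The paper supplies no proof for this corollary; it is stated immediately after Proposition~\ref{prop:intersection-numbers} as a direct consequence of the estimate~\eqref{eq:h-bound} established there. Your argument follows exactly this route and is correct.

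One simplification worth noting: your reduction to concave $h$ is unnecessary. Proposition~\ref{prop:global_sections} holds for an arbitrary $T$-Cartier divisor $D_h$ --- its proof uses only Proposition~\ref{sec:prop-cartier2weil} and the description of the orders $\ord_{D_{(P,v)}}$, $\ord_{D_\rho}$, never concavity or nefness --- so the two-sided bound~\eqref{eq:h-bound} and the ensuing limit computation apply verbatim without any preliminary modification. Your concern that the lower bound in~\eqref{eq:h-bound} requires the degrees to be eventually nonnegative is also unfounded: Riemann--Roch gives $h^0(Y,D)\geq\deg D+1-g(Y)$ unconditionally (because $h^1\geq 0$), so the left inequality holds regardless of sign. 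Hence
\[
\lim_{\nu\to\infty}\frac{(n+1)!}{\nu^{\,n+1}}\,h^0\bigl(X,\CO(\nu D_h)\bigr)=(n+1)!\,\vol h^*
\]
for every $T$-Cartier $D_h$, and bigness is equivalent to $\vol h^*>0$ with no further reduction. Apart from this extra (harmless) step, your proposal matches what the paper intends.
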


\begin{exmp}
  We reconsider our divisorial fan of example~\ref{sec:exmp-quadric}. 
  By theorem~\ref{sec:thm-canonical-divisor} $3D_{(\infty,(\frac{1}{2},\frac{1}{2}))}$ is an anti-canonical divisor. By theorem~\ref{sec:prop-cartier2weil} it corresponds to the support function $h$ given by the tropical polynomials

  \begin{align*}
    h_0=&\mtrop{(-3)*x^{(3,0)}+0*x^{(0,3)}+0*x^{(-3,0)}+0*x^{(0,-3)}}\\
    h_1=&\mtrop{0*x^{(3,0)}+(-3)*x^{(0,3)}+0*x^{(-3,0)}+0*x^{(0,-3)}}\\
    h_\infty=&\mtrop{3*x^{(3,0)}+3*x^{(0,3)}+0*x^{(-3,0)}+0*x^{(0,-3)}}
  \end{align*}

  The functions $h$ and $h^*$ are sketched in figure~\ref{fig:canonical-divisor}.
    \begin{figure}[htbp]
      \subfigure[$h$]{
        \includegraphics[trim=0 470 0 0,clip=true, width=10cm]{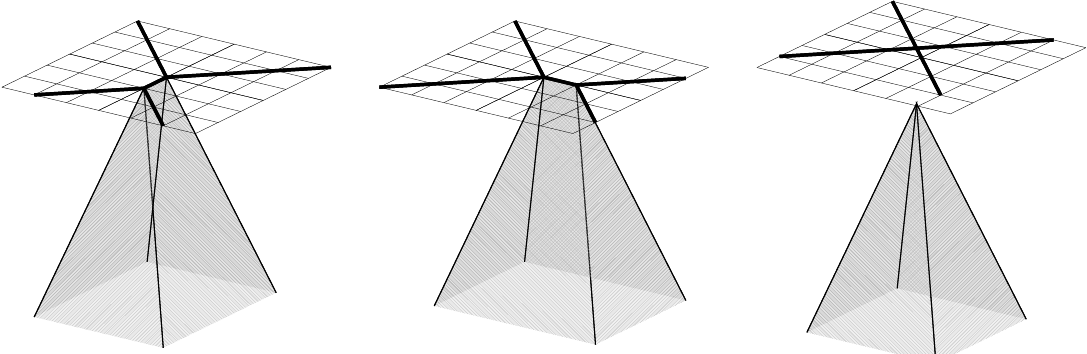}
      }
      \subfigure[$h^*$]{
        \includegraphics[trim=0 470 0 0,clip=true, width=10cm]{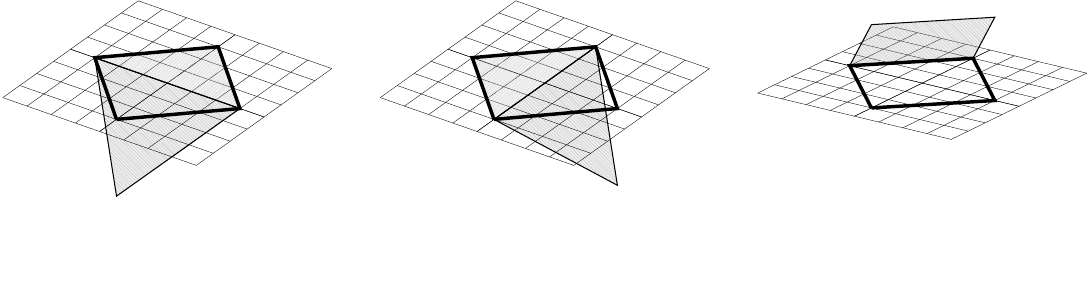}%
      }
      \caption{A canonical divisor on $Q^3$}
      \label{fig:canonical-divisor}
    \end{figure}

    Using the theorem~\ref{sec:thm-ample} we see that the anti-canonical divisor is ample.  From $h$ we see that $X$ has Fano index $3$. By theorem~\ref{prop:intersection-numbers} we get $(-K)^3=28$.

    So from the classification of Fano threefolds (\cite{0382.14013}) we know that $X$ is a quadric in $\PP^4$.
  \end{exmp}

\section{Singularities}
\label{sec:singularities}
First we give criteria to decide whether a polyhedral divisor on a curve describes a smooth variety. In principle this is only a reformulation of the well known fact, that affine varieties with good $\CC^*$ action are smooth if and only if they are open subsets of $\A^n$.

\begin{prop}
\label{sec:prop-D-smooth}
  A polyhedral divisor $\D$ having a complete curve $Y$ as its locus defines a smooth variety if and only if $Y=\PP^1$,
  $\D \sim \Delta_P \otimes P + \Delta_Q \otimes Q$ 
  and
  $$\delta:=\overline{\QQ^+ \cdot (\{1\} \times \Delta_P \cup \{-1\} \times \Delta_Q)} \subset \QQ\times N$$ 
  is a regular cone.
\end{prop}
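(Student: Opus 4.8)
The plan is to reduce to the toric situation via Theorem~\ref{sec:thm-iso-divisor} and Remark~\ref{sec:rem-toric}. First I would observe that since $\D$ has complete locus, $Y$ must be a complete curve, and $\pdv(\D)$ is an affine variety admitting a $T^N$-action with the categorical quotient $Y_0 = \spec A_0$ being a point (because $\Gamma(Y,\CO_Y) = \CC$ for $Y$ complete). A smooth affine variety with a good torus action is, by the classical Bia\l ynicki-Birula-type result quoted in the paragraph before the proposition, an open subset of some $\A^n$; in fact for a $T^N$-action with fixed point it must be $\A^n$ itself with a linear action. Such a variety is toric. By Corollary~\ref{sec:cor-toric} this forces $Y = \PP^1$ and $\fan_P$ (here the fan is just $\{\D\}$ together with its faces) to be a lattice translate of $\tail\D$ at all but at most two points; equivalently, after applying an automorphism of $\PP^1$ and using Theorem~\ref{sec:thm-iso-divisor} to subtract principal divisors, we may assume $\D \sim \Delta_P \otimes P + \Delta_Q \otimes Q$ with $P = 0$, $Q = \infty$.

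Next I would make the identification with the toric cone explicit. By Remark~\ref{sec:rem-toric}, a divisorial fan on $\PP^1$ supported only at $0$ and $\infty$ corresponds to a cone in $N' \oplus N$ where $N' = \ZZ$ records the toric direction over $\PP^1$; reading off the recipe there, the single maximal cone attached to $\D = \Delta_0 \otimes \{0\} + \Delta_\infty \otimes \{\infty\}$ is exactly
$$\delta = \overline{\QQ^+ \cdot \bigl(\{1\}\times\Delta_0 \,\cup\, \{-1\}\times\Delta_\infty\bigr)} \subset \QQ\times N,$$
the sign $-1$ appearing because the primitive ray vectors of the two torus-invariant points of $\PP^1$ in the $N'$-direction are $+1$ and $-1$. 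Thus $\pdv(\D)$ is equivariantly isomorphic to the affine toric variety $X_\delta$. It remains to note that $X_\delta \cong \A^{n+1}$ if and only if $\delta$ is a regular cone (i.e.\ generated by part of a lattice basis of $\ZZ\times N$) — this is the standard smoothness criterion for affine toric varieties. Combining, $\pdv(\D)$ is smooth iff $\delta$ is regular.

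I expect the main subtlety to be the "only if" direction, specifically justifying that smoothness of $\pdv(\D)$ forces the variety to be toric — i.e.\ that one genuinely gets $\A^n$ and not merely a smooth variety. The point is that the good $T^N$-action has a unique fixed point (the vertex, corresponding to $A_0 = \CC$), smoothness at that point gives a $T^N$-equivariant isomorphism of the local ring with a polynomial ring on which the torus acts linearly, and since $\pdv(\D)$ is affine and its coordinate ring is generated in the weights appearing in the tangent space, this extends to a global equivariant isomorphism $\pdv(\D) \cong \A^n$ with linear action. Once this is in hand, the toric identification and Corollary~\ref{sec:cor-toric} do the rest; the remaining steps — extracting the shape $\Delta_P \otimes P + \Delta_Q \otimes Q$ and matching $X_\delta \cong \A^{n+1}$ with regularity of $\delta$ — are routine toric bookkeeping, with the only care needed in getting the sign convention on the $\{-1\}\times\Delta_Q$ slice right.
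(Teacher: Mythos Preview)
Your overall strategy matches the paper's, and the ``if'' direction together with the toric identification via Remark~\ref{sec:rem-toric} is correct. The gap is in the ``only if'' direction: the inference from $A_0 = \Gamma(Y,\CO_Y) = \CC$ to ``there is a unique $T^N$-fixed point'' fails whenever the tail cone $\sigma = \tail \D$ is not full-dimensional. In that case $\sigma^\perp \cap M \neq 0$, and properness of $\D$ on the complete curve forces a multiple of $\D(u)$ to be principal for every $u \in \sigma^\perp \cap M$; hence $A$ contains units of nonzero $M$-degree, the ideal $\bigoplus_{u \neq 0} A_u$ is all of $A$, and the unique closed $T^N$-orbit is a torus of positive dimension rather than a fixed point. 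Such cases genuinely occur with $\pdv(\D)$ smooth, so the Bia\l ynicki-Birula argument you sketch does not cover them --- having $\spec A^{T}$ a point is strictly weaker than having an attractive fixed point.

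The paper closes this gap by passing to the $\ZZ_{\geq 0}$-grading induced by some $v \in \relint \sigma \cap N$, identifying its degree-zero part with a Laurent ring $k[M']$ for a full sublattice $M' \subset \sigma^\perp$ (this is exactly where properness is used), and then proving an elementary lemma: a finitely generated positively graded algebra $A = \bigoplus_{i \geq 0} A_i$ with $A_0 = k[\ZZ^r]$ is regular if and only if it is a free polynomial extension of $A_0$. This lemma substitutes for your fixed-point argument and works uniformly in $\dim \sigma$; once $A$ is a polynomial ring over $k[M']$ the variety is visibly toric and your remaining steps go through.
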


\begin{proof}
  One direction is obvious, because the polyhedral divisor $\D$ describes the toric variety $X_\delta$ with a $T_N$-action induced by
  $N \hookrightarrow \QQ\times N$ \cite[section~11]{MR2207875}. For the other direction note that for $v \in \relint \sigma \cap N$ we get a positive grading 
  $$\Gamma(X,\O_X) = \bigoplus_{i \geq 0} \bigoplus_{\langle u,v \rangle = i} \Gamma(Y, \O(\D(u))).$$ 
  There is a full sub-lattice $M' \subset \sigma^\perp$ such that $\D(u)$ is integral for $u \in M'$. Because $\D$ is proper we get
  $\D(u)$ and $\D(-u)$ to be of degree $0$ and even principal $\D(u)=\divisor(f_u)$ and $\D(-u)=-\divisor(f_u)$ for $u \in M'$.
  This implies that $\Gamma(X,\O_X)_0 \subset K(X)$ is generated by $\{f^{\pm 1}_{u_1} \chi^{\pm u_1}, \ldots, f^{\pm 1}_{u_r} \chi^{\pm u_r}\}$ where the $u_i$ are elements of a $M'$-basis. We get $\Gamma(X,\O_X)_0 \cong k[M']$. 

  The following lemma implies that 
  $$X = \spec k[M'][\ZZ_{\geq 0}^{\dim \sigma}]$$
  holds and $\D$ has the claimed form.
\end{proof}

\begin{lem}
  Let $A_0=k[\ZZ^r]$ and $A = \bigoplus_{i\geq 0} A_i$ a finitely generated positive graded $A_0$-algebra. $A$ is a regular ring if
  and only if $A$ is a free generated as an $A_0$-algebra. 
\end{lem}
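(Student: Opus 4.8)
The plan is to prove the two implications separately. The direction ``freely generated $\Rightarrow$ regular'' is immediate: if $A\cong A_0[x_1,\dots,x_n]$, then, since $A_0=k[\ZZ^r]=k[y_1^{\pm 1},\dots,y_r^{\pm 1}]$ is a localization of a polynomial ring over $k$ and hence regular, and a polynomial ring over a regular Noetherian ring is regular, $A$ is regular. So I would spend all the effort on the converse: assuming $A$ regular, show $A\cong A_0[x_1,\dots,x_{d-r}]$ with $d:=\dim A$.

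First I would collect structural facts. Since $A_0=k[\ZZ^r]$ is finitely generated over $k$ and $A$ is a finitely generated $A_0$-algebra, $A$ is finitely generated over $k$. A regular ring is reduced and normal, and since $A$ is $\NN$-graded with $A_0$ a domain, a short degree argument (using reducedness) shows every idempotent of $A$ lies in $A_0$; hence $\spec A$ is connected and the normal ring $A$ is a domain. As an affine domain over $k$ it is catenary and equidimensional, so $\operatorname{ht}\mathfrak p+\dim(A/\mathfrak p)=d$ for every prime $\mathfrak p$, and the analogous formula with $d$ replaced by $r$ holds for $A_0$.

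The heart of the argument is the conormal module of the augmentation $A\twoheadrightarrow A/A_+=A_0$, i.e.\ of the closed immersion $\spec A_0\hookrightarrow\spec A$. Both $A$ and $A_0$ being regular, this is a regular immersion --- locally, an ideal of a regular local ring with regular quotient is generated by part of a regular system of parameters --- so $A_+/A_+^2$ is a finitely generated projective $A_0$-module, and a comparison of the two dimension formulae shows its rank at every prime of $A_0$ is the constant $d-r$. At this point I would invoke the theorem of Swan, the Laurent-polynomial case of Quillen--Suslin: every finitely generated projective module over $A_0=k[\ZZ^r]$ is free. Since $A_0$ sits in degree $0$, the $\NN$-grading decomposes $A_+/A_+^2$ into a finite direct sum of $A_0$-submodules, each a direct summand of a free module and hence itself free; picking homogeneous bases degree by degree yields a homogeneous $A_0$-basis $\bar f_1,\dots,\bar f_{d-r}$ of $A_+/A_+^2$.

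To conclude, I would lift this basis to homogeneous elements $f_1,\dots,f_{d-r}\in A_+$; graded Nakayama (valid because these are homogeneous of positive degree) shows they generate the ideal $A_+$, hence generate $A$ as an $A_0$-algebra, producing a graded surjection $\phi\colon A_0[x_1,\dots,x_{d-r}]\twoheadrightarrow A$. Its source is an affine domain over $k$ of dimension $r+(d-r)=d=\dim A$, so equidimensionality forces $\operatorname{ht}\ker\phi=0$ and therefore $\ker\phi=0$; thus $\phi$ is an isomorphism and $A$ is freely generated over $A_0$. I expect the one genuinely nontrivial ingredient to be the passage from \emph{locally} free to \emph{globally} free for $A_+/A_+^2$, i.e.\ the need for the Laurent-polynomial Quillen--Suslin theorem (alternatively one could first argue that the fibre rank of $A_+/A_+^2$ is constant on the closed points of $\spec A_0$ and that $A_0$ is a reduced Jacobson ring, but Swan's result would still be needed at the end); everything else --- the domain property, the fact that a closed immersion of regular schemes is a regular immersion, and the final dimension count --- is routine.
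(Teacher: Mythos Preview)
Your proof is correct, but it takes a different route from the paper's.

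The paper argues directly at a single closed point: pick a minimal homogeneous generating set $g_1,\dots,g_l$ of $A$ over $A_0$, take any maximal ideal $\mathfrak m_0\subset A_0$, and set $\mathfrak m=A\mathfrak m_0+(g_1,\dots,g_l)$. Regularity gives $\dim_K\mathfrak m/\mathfrak m^2=\dim A$, so if $l>\dim A-r$ the $r+l$ generators of $\mathfrak m$ are $K$-linearly dependent modulo $\mathfrak m^2$; the paper then extracts from this a homogeneous relation $g_i=\sum_{|\alpha|\ge 2}b_\alpha g^\alpha$ with $b_\alpha\in A_0$, drops the terms involving $g_i$ itself by a degree comparison, and contradicts minimality. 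Hence $l=\dim A-r$ and $A$ is a polynomial ring over $A_0$.

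Your argument is more structural: you identify $A_+/A_+^2$ as the conormal module of the regular closed immersion $\spec A_0\hookrightarrow\spec A$, conclude it is locally free of constant rank $d-r$, and then invoke Swan's Laurent-polynomial version of Quillen--Suslin to get global freeness before lifting a homogeneous basis. The endgame (graded Nakayama plus the dimension count showing the surjection $A_0[x_1,\dots,x_{d-r}]\twoheadrightarrow A$ is an isomorphism) is the same in spirit for both.

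What each approach buys: the paper's route is short and elementary in intention, working entirely at one maximal ideal with no projective-module theory. Your route is heavier but more transparent about the global step: the passage from ``locally $d-r$ generators suffice'' to ``globally $d-r$ generators suffice'' is exactly freeness of $A_+/A_+^2$, and you supply Swan's theorem for it. In the paper's argument this global step is hidden in the claim that the $K$-linear dependence at $\mathfrak m_0$ yields a relation with coefficients in $A_0$ in which some $g_i$ appears with \emph{unit} coefficient. You also make explicit that $A$ is a domain, which both arguments need for the final dimension comparison.
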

\begin{proof}
  We may choose a minimal homogeneous generating system $g_1, \ldots, g_l$ for $A$ and consider a maximal ideal $\m_0$ of $k[\ZZ^r]$ then
  $\m := A\m_0 + (g_1,\ldots, g_l)$ is a maximal ideal of $A$. If $A$ is regular and $l > \dim A - r$ then 
$\dim_k \m/\m^2 = \dim A < r + l$ and w.l.o.g. we may assume that there is a $g_i$ such that there is a (finite) homogeneous relation 
  $g_i = \sum_{|\alpha|\geq 2} b_\alpha g^\alpha$, with $b_\alpha \in A_0$, $\alpha \in \NN^l$. For degree reasons we may assume that $b_\alpha = 0$ 
  if $\alpha_i \neq 0$. But this contradicts the minimality of $\{g_1,\ldots, g_l\}$. So we have $l = \dim A - r$, thus $A$ is free as an $A_0$-algebra.
\end{proof}

\begin{thm}
\label{sec:thm-toric-sing}
  A polyhedral divisor $\D = \sum_P \Delta_P$ having an affine curve $Y$ as its locus, defines a smooth/$\QQ$-factorial/Gorenstein variety if and only if for every $P \in Y$ the cone
$$\delta_P := \overline{\QQ^+ \cdot (\{1\} \times \Delta_P)} \subset \QQ\times N$$
 is regular/simplicial/Gorenstein.

Moreover the singularity at $\pi^{-1}(P)$ is analytically isomorphic to the toric singularity $X_{\delta_P}$.
\end{thm}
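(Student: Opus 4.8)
The real content of the theorem is its last assertion — the identification of the analytic local structure of $X=\pdv(\D)$ along the fibres of the quotient map $q\colon X\to Y_0=Y$ — and the three equivalences all follow from it. Indeed, once we know that for $x\in q^{-1}(P)$ the completed local ring $\widehat{\CO}_{X,x}$ is isomorphic to that of the toric variety $X_{\delta_P}$ at a suitable point, smoothness and the Gorenstein property follow from the classical toric criteria (the toric variety of a cone is regular, resp.\ Gorenstein, iff the cone is) together with the fact that regularity and the Gorenstein property of a Noetherian local ring are detected by the completion; $\QQ$-factoriality needs a related but slightly different argument, since it is not in general visible on the completion. To reach the local structure the plan is to localise the base curve at $P$, which kills all polyhedral coefficients except $\Delta_P$.

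Concretely, set $R:=\CO_{Y,P}$, a discrete valuation ring with uniformiser $t$ and residue field $\CC$. Since $\pdv(\D)=\spec\bigoplus_{u\in\sigma^\vee\cap M}\Gamma\bigl(Y,\CO(\D(u))\bigr)$ and every summand is a rank-one torsion-free $\Gamma(Y,\CO_Y)$-module, base change to $\spec R$ is exact and commutes with the $M$-grading, so $X\times_Y\spec R=\spec\bigoplus_u\Gamma\bigl(\spec R,\CO(\D(u)|_{\spec R})\bigr)$; on $\spec R$ the closed point is the only prime divisor, hence $\D|_{\spec R}=\Delta_P\otimes[P]$ and, writing $m(u):=\min_{v\in\Delta_P}\langle u,v\rangle$, the degree-$u$ component is $\{f\in K(Y)\mid \ord_P f\ge -m(u)\}=t^{-m(u)}R$. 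On the other side $\delta_P=\overline{\QQ^+\cdot(\{1\}\times\Delta_P)}$ is a pointed cone (because $\tail\Delta_P$ is), and dualising gives
$$\delta_P^\vee\cap(\ZZ\times M)=\bigl\{(j,u)\mid u\in\sigma^\vee,\ j+\langle u,v\rangle\ge 0\ \text{for all }v\in\Delta_P\bigr\}=\bigl\{(j,u)\mid u\in\sigma^\vee,\ j\ge -m(u)\bigr\},$$
so $\CC[\delta_P^\vee\cap(\ZZ\times M)]=\bigoplus_u s^{-m(u)}\CC[s]\,\chi^u$ with $s:=\chi^{(1,0)}$ and $(1,0)\in\delta_P^\vee$. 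Comparing the two descriptions degree by degree and sending $s\mapsto t$ yields a $T^N$-equivariant isomorphism
$$X\times_Y\spec R\ \cong\ X_{\delta_P}\times_{\A^1}\spec R,$$
where $\A^1=\spec\CC[s]$, the morphism $X_{\delta_P}\to\A^1$ is the one defined by $s$, and $\spec R\to\A^1$ is $s\mapsto t$. This is precisely the affine-locus, curve-local counterpart of the description quoted in the proof of Proposition~\ref{sec:prop-D-smooth} (\cite[section~11]{MR2207875}).

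The conclusions now follow. Since $\widehat{\CO}_{Y,P}\cong\CC[[t]]\cong\widehat{\CO}_{\A^1,0}$ compatibly with $s\mapsto t$, the base change becomes an isomorphism after completing, so $\widehat{\CO}_{X,x}\cong\widehat{\CO}_{X_{\delta_P},x'}$ for $x\in q^{-1}(P)$ and the corresponding point $x'\in X_{\delta_P}$; taking $x$ in the orbit of $X_{\delta_P}$ corresponding to $\delta_P$ itself gives the last assertion of the theorem. Regularity and the Gorenstein property pass through the completion in both directions, so $X$ is smooth (resp.\ Gorenstein) along $q^{-1}(P)$ iff $X_{\delta_P}$ is, iff $\delta_P$ is a regular (resp.\ Gorenstein) cone; since for the cofinitely many $P$ with $\Delta_P=\tail\D$ one has $\delta_P=\QQ_{\ge 0}\times\tail\D$, requiring this for every $P$ amounts to the condition on $\tail\D$ together with the finitely many exceptional slices, and running over all $P$ gives the two equivalences. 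For $\QQ$-factoriality I would argue on $X\times_Y\spec R$ directly: if $\delta_P$ is simplicial then $X_{\delta_P}=\A^{n+1}/G$ for a finite abelian $G$, hence $X\times_Y\spec R\cong Z/G$ with $Z=\A^{n+1}\times_{\A^1}\spec R$; the same monomial computation shows every completed local ring of $Z$ is $\CC[[x_0,\dots,x_n]]$, so $Z$ is regular and $Z/G$ — hence $X$ — is $\QQ$-factorial along $q^{-1}(P)$. Conversely, if $\delta_P$ is not simplicial, the $T^N$-invariant Weil divisors of $X_{\delta_P}$ that witness its non-$\QQ$-factoriality pull back along $-\times_{\A^1}\spec R$ to invariant divisors on $X$ that are again not $\QQ$-Cartier, since being $\QQ$-Cartier is a combinatorial condition on such divisors unaffected by this base change.

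The step I expect to be most delicate is twofold: first, making the reduction airtight — that $\pdv$ of the localised polyhedral divisor really is $X\times_Y\spec\CO_{Y,P}$, and pushing through the (elementary but bookkeeping-heavy) cone duality identifying the resulting graded ring with $\CC[\delta_P^\vee\cap(\ZZ\times M)]\otimes_{\CC[s]}R$; second, the $\QQ$-factoriality case, precisely because it cannot be routed through the completion and must be handled through the quotient presentation (or by exhibiting the obstructing invariant divisors explicitly). Pinning down orbit by orbit which $x'\in X_{\delta_P}$ corresponds to a given $x\in q^{-1}(P)$ is part of the same bookkeeping.
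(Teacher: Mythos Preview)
Your argument is correct and follows the same strategy as the paper: reduce to a single coefficient by working locally over $P$, then identify the resulting graded ring with the semigroup ring of $\delta_P^\vee$. The paper does this in one step by completing $A$ along $\m_P A$ and invoking the smoothness of $Y$ to replace it by $\A^1$; you insert the intermediate Zariski localisation $X\times_Y\spec\CO_{Y,P}\cong X_{\delta_P}\times_{\A^1}\spec\CO_{Y,P}$ before completing, which is the same idea made more explicit. (One cosmetic slip: your exponents should be $\lceil -m(u)\rceil$ rather than $-m(u)$, since $m(u)$ need not be integral; the same rounding occurs on both sides, so the identification survives.)

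Where your write-up genuinely adds something is the $\QQ$-factorial case. The paper's sentence ``all properties can equivalently be checked on the formal completion'' is fine for regularity and the Gorenstein property but is not a general fact for $\QQ$-factoriality, and the paper does not justify it. Your detour through the finite-abelian-quotient presentation when $\delta_P$ is simplicial, and through the explicit invariant non-$\QQ$-Cartier divisors when it is not, is the right way to close that gap; the converse direction could be sharpened by observing that the invariant prime divisors of $X$ through $q^{-1}(P)$ are in bijection with the rays of $\delta_P$ and that the $\QQ$-Cartier condition for an invariant divisor is the same linear-algebra condition on both sides, but what you have is already adequate.
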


\begin{proof}
  We may use the same argument as in \cite{MR2020670} for the case of an affine $\CC^*$-surface, because all properties can equivalently checked on the formal completion.

We have $A=\bigoplus_{u\in \sigmav \cap M} \Gamma(Y, \CO(\D(u)))$ and $Y = \spec A_0$. For a point $P \in Y$ we consider the maximal ideal $\m_P \subset A_0$ which gives rise to the maximal ideal $\mathfrak{M}_P = A\m^0_P + \bigoplus_{u \notin \sigma^\perp} A_u \subset A$.

Because $Y$ is smooth the completion of $A$ at $\m_P$ equals the completion of $\pdv(\D')$ at $\mathfrak{M}_0$, with $\D' = \Delta_P \otimes \{0\}$ on $Y'=\mathbf{A}^1$.

But now $\bigoplus_{u\in \sigmav \cap M} \Gamma(\mathbf{A}^1, \CO(\D')) \cong k[\deltav_P \cap \ZZ \times M]$ holds and our claim follows.

\end{proof}

\begin{prop}
  A polyhedral divisor $\D = \sum_P \Delta_P \otimes P$ having a complete curve $Y \not \cong \PP^1$ as its locus is not  $\QQ$-factorial. 
\end{prop}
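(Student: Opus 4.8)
\emph{Plan.} The idea is to compare the Picard group and the divisor class group of $X:=\pdv(\D)$. Since $X$ is a normal affine variety and $\pic(X)\hookrightarrow\operatorname{Cl}(X)$ is injective, $X$ is $\QQ$-factorial exactly when this inclusion becomes an isomorphism after tensoring with $\QQ$; once we know $\pic(X)=0$ this happens precisely when $\operatorname{Cl}(X)$ is a torsion group. So it suffices to prove (a) $\pic(X)=0$ and (b) $\operatorname{Cl}(X)$ is not torsion, and (b) is where the hypothesis $Y\not\cong\PP^1$, i.e.\ $g(Y)\ge 1$, enters.

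For (a): $X$ is affine and carries a $T^N$-action, so every line bundle on $X$ is $T^N$-linearizable and hence represented by a $T^N$-invariant Cartier divisor $D_h$ with $h$ a $T$-Cartier divisor on the divisorial fan $\{\D\}$. As $\D$ has complete locus, the definition of $\tcadiv$ forces $h=u_0+\divisor(f)$ to be principal, with $u_0\in M$ and $f\in K(Y)^*$; by Proposition~\ref{sec:prop-cartier2weil} the Weil divisor $D_h$ is then the principal divisor $\divisor(f\cdot\chi^{u_0})$. Hence every line bundle on $X$ is trivial and $\pic(X)=0$. (If $\tail\D$ is full-dimensional one may alternatively note that a lattice point in its relative interior yields a one-parameter subgroup contracting $X$ to its unique fixed point, so $X$ is an affine cone and $\pic(X)=0$.)

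For (b): the construction of $\pdv(\D)$ identifies $K(X)$ with the $M$-graded field $K(Y)(M)=K(Y\times T^N)$, so $X$ has a dense $T^N$-stable open subset $X^\circ$ that is $T^N$-equivariantly isomorphic to a $T^N$-stable open subset of $Y\times T^N$ ($T^N$ acting on the second factor). The complement of such an open is $T^N$-stable of codimension $\ge 1$, hence equals $F\times T^N$ for a finite set $F\subset Y$, and $F\ne\emptyset$ because $Y\times T^N$ is not quasi-affine while $X^\circ$ is. Restriction of Weil divisors gives a surjection
$$\operatorname{Cl}(X)\ \twoheadrightarrow\ \operatorname{Cl}(X^\circ)=\operatorname{Cl}\big((Y\setminus F)\times T^N\big)=\operatorname{Cl}(Y\setminus F)=\pic(Y)\big/\bangle{[Q]:Q\in F}.$$
Since $g(Y)\ge 1$, the group $\pic(Y)$ has infinite rank — its degree-zero part $\pic^0(Y)$ is a positive-dimensional abelian variety — whereas $\bangle{[Q]:Q\in F}$ has finite rank; therefore the quotient, and hence $\operatorname{Cl}(X)$, is not torsion.

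Assembling (a) and (b): were $X$ $\QQ$-factorial, every element of $\operatorname{Cl}(X)$ would have a nonzero multiple lying in the image of $\pic(X)=0$, forcing $\operatorname{Cl}(X)$ to be torsion and contradicting (b). The step I expect to require the most care is (b), specifically extracting the $T^N$-equivariant birational model $X\sim Y\times T^N$ from the description of $\pdv(\D)$ and arranging $X^\circ$ so that its complement lies over finitely many points of $Y$; the remaining manipulations of class groups are routine.
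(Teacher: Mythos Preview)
Your argument is correct, and it rests on the same arithmetic fact the paper uses---that for $g(Y)\ge 1$ the group $\pic(Y)$ modulo the subgroup generated by finitely many point classes is not torsion---but the packaging is different.

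The paper's proof stays entirely inside the combinatorial framework of invariant divisors. It chooses a single point $P\notin\supp\D$ whose class in $J(Y)/\bangle{\supp\D}$ has infinite order, and then shows by a direct order computation that no multiple of the invariant prime divisor $D_{(P,0)}$ can be principal: any $f\chi^u$ with $\ord_{D_{(P,0)}}(f\chi^u)=\ell$ forces $\ord_P f=\ell$, and by the choice of $P$ the divisor $\divisor f$ must then meet some further $Q\notin\supp\D$, producing an unwanted contribution along $D_{(Q,0)}$. The step ``invariant Cartier $\Rightarrow$ principal'' on a $\D$ with complete locus is exactly what you use in (a), but the paper applies it only to the hypothetical $\QQ$-Cartier multiple of $D_{(P,0)}$, so it never needs to know that \emph{every} line bundle is linearizable.

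Your route is more structural: you globalise (a) to $\pic(X)=0$ via linearizability for torus actions, and in (b) you replace the explicit divisor by a surjection $\operatorname{Cl}(X)\twoheadrightarrow\operatorname{Cl}(X^\circ)\cong\pic(Y)/\bangle{F}$. This buys a clean statement about $\operatorname{Cl}(X)$ and generalises readily; the paper's version buys self-containment and an explicit non-$\QQ$-Cartier witness. One small economy: you can drop the appeal to linearizability altogether, since any $\QQ$-Cartier multiple of a $T$-invariant Weil divisor is already a $T$-invariant Cartier divisor and hence principal by the $\tcadiv$ condition---that is precisely how the paper avoids the extra input.
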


\begin{proof}
 Because $g(Y)>0$ the free part of the Jacobian $J(Y)/J(Y)_{\text{tor}}$ is not finitely generated, thus we may find a point
 $P \in Y$ such that no multiple of $P$ is an element of $\langle Q \mid Q \in \supp \D \rangle \subset J(Y)$. Because $P \not \in \supp \D$ 
 the point $0 \in N$ is a vertex of $\Delta_P$ and $(P,0)$ represents an prime divisor $D_{(P,0)}$ of $X:=\pdv(\D)$. We claim that it is not $\QQ$-Cartier.

 In our case it enough to show that no multiple of $D_{(P,0)}$ is principal, because every invariant Cartier divisor on $X$ is even principal.
 Consider any $f \chi^u  \in K(X)^T$ such that $\ord_{D_{(P,0)}} (f \chi^u) = \ell > 0$.
 We have $\ord_{D_{(P,0)}} (f \chi^u) = \mu(0) (\ord_P(f) + \langle 0,u \rangle) = \ord_P(f)$. By the choice of $P$ there is a 
 $Q \in \supp \divisor(f) \setminus \supp \D$ with $Q\neq P$, this implies $\ord_{D_{(Q,0)}}(f \chi^u) = \ord_Q(f) \neq 0$ and thus $\divisor (f \chi^u) \neq \ell \cdot D_{(P,0)}$
\end{proof}

\begin{rem}
  In the case $Y \cong \PP^1$ we may decide wether $\D = \Delta_1 \otimes P_1 + \ldots + \Delta_s \otimes P_s$ defines a ($\QQ$-)factorial variety by means of linear algebra.
  Consider the following matrix:
$$
A(\supp \D) =\begin{pmatrix}
  -1 & -1 & \ldots &  -1& 0 \\
  \hline
  \mu(v_{P_1}^1) & 0 & \ldots & 0 & \mu(v_{P_1}^1) v_{P_1}^1 \\
  \vdots & \vdots  & & \vdots &\vdots \\
  \mu(v_{P_1}^{r_1})& 0 & \ldots & 0 & \mu(v_{P_1}^{r_1}) v_{P_1}^{r_1} \\
         &         & \ddots & & \\
  0 & 0 & \ldots &  \mu(v_{P_s}^1)    &  \mu(v_{P_s}^1) v_{P_s}^{1} \\
  \vdots & \vdots  & & \vdots &\vdots \\
  0 & 0 & \ldots &  \mu(v_{P_s}^{r_s}) & \mu(v_{P_s}^{r_s}) v_{P_s}^{r_s} \\
  \hline
  0 & 0 & \ldots & 0 & \rho^1 \\
  \vdots & \vdots  &  & \vdots &\vdots \\
  0 & 0 & \ldots & 0 & \rho^{r_0}   
\end{pmatrix}
$$
  We have $r$ rows, one ensuring that $\sum a_i = 0$, one for every vertex $v_{P_j}^i$ of a nontrivial coefficient $\Delta_{P_j}$ and one for every $\rho \in \xrays(\D)$. There are $s+n$ columns, one for every $P_i \in \supp \D$ and $n=\dim N$ for the coordinates of the vertices $v_{P_j}^i$.

  $\pdv(\D)$ is factorial iff the columns of the matrix generate $\ZZ^r$. It is $\QQ$-factorial iff they generate $\QQ^r$ or equivalently iff the rows are linearly independent.
\end{rem}
\begin{proof}
\textit{Step 1}
  We may choose a {\em generic} point $Q \notin \supp \D$ and we get a corresponding prime divisor $(Q,0)$. Now every Weil divisor of $X$ is linearly equivalent to one of the form 
$$\sum_{ij} a_{ij} \cdot D_{(P_i, v_{P_i}^j)} + \sum_{i} b_{i} \cdot D_{\rho^i} + c \cdot D_{(Q,0)},$$
because $D=\sum_{i=1}^r c_i \cdot D_{(Q_i,0)}$ is principal for $\sum_{i=1}^k c_i = 0$ and $Q_i \notin \supp \D$, since
$D=\divisor(f \chi^0)$, where $\divisor(f) = \sum c_i Q_i$.

\textit{Step 2}
  Every principal divisor $D$ of $X$ is given by an principal divisor of $\PP^1$, i.e. a Weil divisor $D_Y = \sum_P a_P P$ of degree $0$ and a weight $u \in M$. Because of \textit{step 1} it is sufficient  to consider those principal divisors such that $\supp D_Y \subset \supp \D \cup \{Q\}$.
  
  The coefficients for the  prime divisors $D_{(Q,0)}$, $D_{(P_i, v_{P_i}^j)}$ and $\rho^i$ of the corresponding Weil divisor on $X$ can be obtained as rows of the vector $A \cdot (a_{P_1},\ldots ,a_{P_s},u)^t$. So $X$ is factorial if this mapping is surjective and $\QQ$-factorial if it is $\QQ$-continuation is surjective.
\end{proof}

\begin{cor}
  Let $\D$ be a polyhedral divisor with $\loc \D=\PP^1$ then $\X(\D)$ is \mbox{$\QQ$-factorial} iff 
    $$\# \xrays(\D) + \sum_{P \in Y} (\# \Delta_P^{(0)} - 1) < \dim(N).$$

\end{cor}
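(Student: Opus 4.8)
The plan is to read everything off the matrix $A(\supp\D)$ of the preceding remark and to recast the claimed inequality as a rank condition. By that remark, $\X(\D)$ is $\QQ$-factorial if and only if the rows of $A(\supp\D)$ are linearly independent. Set $s:=\#\supp\D$ and $n:=\dim N$. The matrix has $s+n$ columns, and since it has one row forcing $\sum a_i=0$, one row per vertex of each non-trivial coefficient, and one row per extremal ray, it has
$$r:=1+\#\xrays(\D)+\sum_{P\in\supp\D}\#\Delta_P^{(0)}$$
rows. A point $P\notin\supp\D$ has $\Delta_P=\tail\D$, which contributes the single vertex $0$, so $\sum_{P\in\supp\D}\#\Delta_P^{(0)}=s+\sum_{P\in Y}(\#\Delta_P^{(0)}-1)$, and the asserted inequality is literally $r\le s+n$. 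If it fails, then $r>s+n$ forces the rows to be dependent, so $\X(\D)$ is not $\QQ$-factorial; it remains to prove that $r\le s+n$ implies that the rows are independent.

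Here I would use the block shape of $A(\supp\D)$. The $s$ columns indexed by the points of $\supp\D$ have rank $s$ (column $P_j$ is supported on the first row and the vertex rows of $P_j$, and restricting to the rows of one vertex per $P_j$ gives a diagonal matrix with nonzero entries), so $\operatorname{rank}A(\supp\D)=s+\dim V$, where $V\subseteq N_\QQ$ is the space of coordinate parts of those row-combinations that vanish in the point-columns. Such a combination is exactly one with $\sum_v\lambda_{P,v}\mu(v)$ independent of $P$; computing the images of a basis of this $(r-s)$-dimensional space shows that $V$ is spanned by the $r-s$ vectors $\{v-v'\mid v,v'\text{ vertices of one }\Delta_P\}\cup\{n_\rho\mid\rho\in\xrays(\D)\}\cup\{\sum_P v_P\}$ (one vertex $v_P$ per $P$), and hence $V=\operatorname{span}(\pi)+\operatorname{span}\{n_\rho\}$ with $\pi$ the compact Minkowski summand of $\deg\D$. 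Therefore $\X(\D)$ is $\QQ$-factorial exactly when these $r-s$ generators are linearly independent, and the whole point is that this is automatic as soon as there are at most $n$ of them.

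The main obstacle is precisely this genericity statement: one must show, using that $\D$ is proper with $\loc\D=\PP^1$, that a linear relation among the listed generators can arise only when there are more than $n$ of them. Properness gives $\deg\D\subsetneq\tail\D$, hence $0\notin\deg\D$ (so $\D(u)$ is big for $u$ in the relative interior of the dual cone) and, for each extremal ray $\rho$, that $\rho-\tail\D$ is disjoint from $\pi$; dualising these two separation facts is what rules out the possible degeneracies — a linear relation among the $n_\rho$, a relation forcing a point of the affine hull of $\pi$ into $\operatorname{span}\{n_\rho\}$, and an affine dependence among the vertices of a single $\Delta_P$ — each of which, one checks, comes with enough extra vertex or ray rows that $r>s+n$. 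A dimension count on $V$ then completes the proof.
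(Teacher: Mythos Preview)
Your reduction in the first two paragraphs is exactly what the paper does: its entire proof is the single sentence ``it is enough to see that $A$ always has maximal rank'', with no further justification. What you add beyond the paper is the attempt in your third paragraph to actually verify this rank claim.

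That attempt remains a sketch. The assertion that each of your three degeneracies forces $r>s+n$ is not argued, and ``dualising these two separation facts'' is not a proof. A clean and complete route is the following. For a \emph{non}-extremal ray $\rho$ pick $t>0$ with $tn_\rho\in\deg\D$ and write $tn_\rho=p+c$ with $p\in\pi$, $c\in\sigma:=\tail\D$; pairing with a $u\in\sigma^\vee$ that cuts out $\rho$ as a face of $\sigma$, and using $\pi\subset\deg\D\subset\sigma$, gives $p,c\in\rho$, and since $0\notin\pi$ the point $p$ is a \emph{positive} multiple of $n_\rho$. Hence every $n_\rho$, extremal or not, lies in $V$, so in fact $V=\operatorname{span}(\sigma)$ and $\operatorname{rank}A=s+\dim\sigma$. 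Under the tacit hypothesis $\dim\sigma=n$ this equals $s+n$; since moreover $r-s\ge\dim V=n$, the inequality $r\le s+n$ is equivalent to $r=s+n=\operatorname{rank}A$, which is precisely $\QQ$-factoriality. Note that without full-dimensionality of $\tail\D$ the corollary (and hence any proof of it) actually fails: for $N=\ZZ^3$, $\sigma=\pos(e_1,e_2)$ and $\D=\bigl((1,1,0)+\sigma\bigr)\otimes P$ on $\PP^1$, both rays are extremal and the inequality reads $2<3$, yet the third coordinate column of $A$ is zero and $\pdv(\D)\cong X'\times\CC^*$ is not $\QQ$-factorial. So your case analysis in the last paragraph cannot succeed as stated; the missing ingredient is exactly the identification $V=\operatorname{span}(\tail\D)$ together with full-dimensionality.
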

\begin{proof}
  By the last remark it is enough to see that $A$ always has maximal rank.
\end{proof}

If $\pdv(\fan)$ is $\QQ$-factorial we can compute the Picard rank of a T-variety:
\begin{thm}[\cite{petersen-suess08}, theorem 3.20.]
\label{sec:thm-picard-rank}
  Assume $X=\X(\fan)$ to be a complete \mbox{$\QQ$-factorial} variety. Then its Picard number is given by
  $$\rho_X= 1 + \# \xrays(\fan) + \sum_{P \in Y} (\# \fan_P^{(0)} - 1) - \dim(N).$$
\end{thm}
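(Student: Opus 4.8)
The plan is to compute $\mathrm{Cl}(X)\otimes\QQ$ explicitly and then pass to the Picard group. Since $X$ is $\QQ$-factorial, every Weil divisor is $\QQ$-Cartier, so $\pic(X)\otimes\QQ=\mathrm{Cl}(X)\otimes\QQ$; and since $Y\cong\PP^1$ the variety $X$ is rational, so $\pic^0(X)=0$ and $\rho_X=\dim_\QQ\bigl(\pic(X)\otimes\QQ\bigr)$. (For a base curve of positive genus one has to add a term coming from $\pic^0(Y)$, which is exactly where $Y\cong\PP^1$ enters below.) Thus it suffices to compute $\dim_\QQ\bigl(\mathrm{Cl}(X)\otimes\QQ\bigr)$.

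First I would fix a finite generating set for $\mathrm{Cl}(X)\otimes\QQ$. Since $T$ is connected, every class is represented by a $T$-invariant $\QQ$-divisor, i.e.\ a $\QQ$-linear combination of the invariant prime divisors $D_\rho$ ($\rho\in\xrays(\fan)$) and $D_{(P,v)}$; and, choosing a generic point $Q\notin\supp\fan$, the argument from the $\QQ$-factoriality discussion above --- a principal divisor $\sum c_iQ_i$ of $\PP^1$ with $\sum c_i=0$ and $Q_i\notin\supp\fan$ pulls back to a principal divisor on $X$ --- lets one replace each $D_{(P,0)}$ with $P\notin\supp\fan$ by $D_{(Q,0)}$. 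Hence $\mathrm{Cl}(X)\otimes\QQ$ is spanned by the classes
$$\bigl\{\,[D_\rho]:\rho\in\xrays(\fan)\,\bigr\}\ \cup\ \bigl\{\,[D_{(P,v)}]:P\in\supp\fan,\ v\in\fan_P^{(0)}\,\bigr\}\ \cup\ \bigl\{\,[D_{(Q,0)}]\,\bigr\},$$
whose number is $N:=\#\xrays(\fan)+\sum_{P\in\supp\fan}\#\fan_P^{(0)}+1$.

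Next I would describe the relations. A $T$-invariant principal $\QQ$-divisor is of the form $\divisor(f\chi^u)$ with $f\in K(\PP^1)^{*}$ and $u\in M\otimes\QQ$ (a $T$-invariant divisor is the divisor of a semi-invariant rational function because $\Gamma(X,\CO_X)^{*}=\CC^{*}$ by completeness), and by Proposition~\ref{sec:prop-cartier2weil} the coefficient of $D_{(P,0)}$ in $\divisor(f\chi^u)$ for a point $P\notin\supp\fan\cup\{Q\}$ is $\pm\ord_P(f)$; so the support of $\divisor(f\chi^u)$ avoids all chosen prime divisors precisely when $\divisor(f)$ is supported on $\supp\fan\cup\{Q\}$. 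Such pairs $(f,u)$ form a $\QQ$-vector space of dimension $\#\supp\fan+\dim N$, and the induced map to the span $\QQ^{N}$ of the chosen classes is injective, since $\divisor(f\chi^u)=0$ forces $f\chi^u\in\Gamma(X,\CO_X)^{*}=\CC^{*}$, hence $u=0$ and $f$ constant; conversely every principal $\QQ$-divisor supported on the chosen prime divisors arises this way. We therefore obtain a short exact sequence
$$0\longrightarrow(M\otimes\QQ)\oplus\QQ^{\#\supp\fan}\longrightarrow\QQ^{N}\longrightarrow\mathrm{Cl}(X)\otimes\QQ\longrightarrow0,$$
so that $\dim_\QQ\bigl(\mathrm{Cl}(X)\otimes\QQ\bigr)=N-\dim N-\#\supp\fan$. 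Finally $\sum_{P\in\supp\fan}\#\fan_P^{(0)}=\#\supp\fan+\sum_{P\in Y}\bigl(\#\fan_P^{(0)}-1\bigr)$, because the summand vanishes for $P\notin\supp\fan$, and substituting gives $\rho_X=1+\#\xrays(\fan)+\sum_{P\in Y}\bigl(\#\fan_P^{(0)}-1\bigr)-\dim N$.

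The convex-geometric bookkeeping and the reduction to a single generic fibre $D_{(Q,0)}$ are routine and mirror the $\QQ$-factoriality argument. The step that needs genuine care is the exactness in the middle of the displayed sequence --- equivalently, that a $T$-invariant principal $\QQ$-divisor whose support avoids $D_{(P,0)}$ for every $P\notin\supp\fan\cup\{Q\}$ really does come from an $f$ with $\divisor(f)$ supported on $\supp\fan\cup\{Q\}$ --- which rests on the coefficient computation of Proposition~\ref{sec:prop-cartier2weil} at the untouched fibres, together with the semi-invariance of functions with $T$-invariant divisor.
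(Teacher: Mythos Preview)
The paper does not prove this statement at all --- it is quoted verbatim from \cite{petersen-suess08} with no argument given. So there is no in-paper proof to compare against.

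That said, your sketch is correct and is precisely the linear-algebra picture already present in the remark immediately preceding the theorem: your chosen generators $D_\rho$, $D_{(P,v)}$, $D_{(Q,0)}$ are exactly the rows of the matrix $A(\supp\fan\cup\{Q\})$ defined there, your space of relations $(M\otimes\QQ)\oplus\QQ^{\#\supp\fan}$ is its column span together with the degree-zero constraint, and your injectivity statement is the observation (made in the proof of the corollary following that remark) that $A$ always has maximal rank. So your argument is not merely compatible with the paper --- it is the natural continuation of the material just above the theorem.

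One small point worth tightening: you invoke $Y\cong\PP^1$ and relegate the higher-genus case to a parenthesis, but the theorem as stated does not assume $Y=\PP^1$. In the context of the paper this is mostly harmless, since the proposition just before the matrix remark shows that any $\D\in\fan$ with complete locus forces $Y=\PP^1$ under $\QQ$-factoriality. Still, your short exact sequence genuinely uses that degree-zero divisors on $Y$ are principal; for a curve of positive genus the kernel of ``take divisor class on $Y$'' would contribute $\pic^0(Y)$ and one must pass to N\'eron--Severi groups rather than $\mathrm{Cl}\otimes\QQ$ to recover the same numerical answer. It would be cleaner either to state $Y=\PP^1$ as a standing hypothesis (and justify it from $\QQ$-factoriality) or to run the argument with $\mathrm{NS}(Y)$ in place of $\mathrm{Cl}(Y)$ from the start.
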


\begin{rem}
\label{sec:rem-gorenstein-index}
  We may also check the ($\QQ$-)Gorenstein property in the case of complete $Y$ by considering 
  $A(\supp \D \cup \supp K_Y)$ as above. Because it always has maximal rank there is at most
  one solution $\left(\begin{smallmatrix}
 a  \\
 u_K
\end{smallmatrix}\right) \in  \QQ^{\supp \D \cup \supp K_Y} \oplus M_\QQ$ of the following equation for the coefficients of the canonical divisor due to the theorems~\ref{sec:thm-canonical-divisor} and \ref{sec:prop-cartier2weil}.

$$A \cdot
\left(\begin{smallmatrix}
 a_1 \\
 \vdots \\
 a_s\\
 u_K
\end{smallmatrix}\right)
=(K_X):=\left(\begin{smallmatrix}
0\\
(K_Y(P_1)+1)\mu(v^1_{P_1})-1\\
(K_Y(P_1)+1)\mu(v^2_{P_1})-1\vspace{-2mm}\\
\vdots\\
(K_Y(P_s)+1)\mu(v^{r_s}_{P_s})-1\\
-1\vspace{-2mm}\\
\vdots\\
-1
\end{smallmatrix}\right)
$$

Now $\D$ is $\QQ$-Gorenstein of index $\ell$, iff a solution exists and $\ell$ is the smallest integer such that
$\ell \cdot \sum a_i P_i$ is principal.
\end{rem}

\begin{prop}
  Let $\D$ be a prolyhedral divisor with a complete curve $Y$ as its locus, such that $X=X(\D)$ is $\QQ$-Gorenstein. Then 
  $X(\D)$ is log-terminal if and only if $Y=\PP^1$ and for every ray $\rho \notin \xrays(\D)$ we have $\sum_{P \in Y} (1-\frac{1}{\mu(\rho_P)}) < 2$, where $\rho_P$ is the unique vertex $v \in \Delta_P$ such that $v + \rho$ is a face.
\end{prop}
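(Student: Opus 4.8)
The plan is to localise the problem to the unique torus-fixed point and to recognise log-terminality there as a log-Fano condition on a lower-dimensional T-variety, which is then read off from the ampleness and canonical-class criteria of Section~\ref{sec:invariant-divisors}. Since $Y$ is complete, $A_0=\Gamma(Y,\O_Y)=\CC$, so $\pdv(\D)$ is affine and, choosing $v_0\in\relint\sigma\cap N$, carries a positive $\ZZ$-grading with one-dimensional degree-zero part; hence it has a unique $T^N$-fixed point $x_0$ and equals the affine cone over the projective T-variety $\hat X:=\proj\bigoplus_u\Gamma(\O(\D(u)))$, which is itself described by a divisorial fan on the \emph{same} curve $Y$ for the $(n-1)$-torus $T^{N/\ZZ v_0}$. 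At any point $x\neq x_0$ a formal neighbourhood is that of a T-variety whose base curve can be chosen affine (restrict $\D$ to an affine open of $Y$ through the support point of $x$ and split off the torus directions transverse to $\overline{Tx}$), so by Theorem~\ref{sec:thm-toric-sing} it is formally toric; as a $\QQ$-Gorenstein toric singularity is log-terminal, $X$ is automatically log-terminal away from $x_0$, and the whole statement reduces to log-terminality at $x_0$.

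To handle $x_0$ I would first perform a toric modification of the fan of faces of $\sigma$ and of the slices $\fan_P$ so that all local models become simplicial; by Theorem~\ref{sec:thm-toric-sing} this only introduces (formally) toric exceptional divisors, whose discrepancies automatically exceed $-1$. Then I blow up the strict transform of $x_0$; componentwise its exceptional divisors I identify with copies of $\hat X$ carrying an orbifold boundary $B$ that records the finite isotropy of the $\CC^*$-action $\lambda_{v_0}$, so that their discrepancies are $-1$ plus the log-Fano defect of $(\hat X,B)$. Thus $x_0$ is log-terminal if and only if $-(K_{\hat X}+B)$ is ample. I would make $-(K_{\hat X}+B)$ explicit by combining Theorem~\ref{sec:thm-canonical-divisor} with Proposition~\ref{sec:prop-cartier2weil}: the coefficients $a^h_P(\sigma')$ of a support function representing it are built from the weights $\mu(v)$, $\varepsilon(v)$ attached to the vertices $\rho_P$ of the polyhedra $\Delta_P$, via $\mu(v)v=\varepsilon(v)n_v$.

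Finally I would apply the semi-ampleness criterion Theorem~\ref{sec:thm-ample} and its ampleness corollary to $-(K_{\hat X}+B)$. The curve $K_Y$ enters additively through the coefficients $(K_Y(P)+1)$, i.e.\ through $\deg K_Y=2g(Y)-2$, so the degree inequality $\deg(-h|_{\sigma'}(0))>0$ can hold only when $g(Y)=0$, forcing $Y=\PP^1$ (and when $g(Y)\geq 1$ one reads off directly an exceptional divisor over $x_0$ with discrepancy $\leq -1$, matching the classical fact that cones over curves of positive genus are not log-terminal). For $Y=\PP^1$, writing out this inequality for the direction corresponding to a ray $\rho$ of the subdivided $\sigma$, cancelling the common translation and inserting the $\mu(\rho_P)$, turns it into exactly $\sum_P\bigl(1-\tfrac1{\mu(\rho_P)}\bigr)<2$; the rays that produce such a condition are precisely those meeting $\deg\D$, i.e.\ the non-extremal ones, while extremal rays give "unmarked tail cones" in the sense of Theorem~\ref{sec:thm-ample} for which no condition appears. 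Conversely, if $Y=\PP^1$ and all these inequalities hold, the same resolution shows every discrepancy exceeds $-1$.

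The main obstacle is the middle step: identifying the orbifold boundary $B$ on $\hat X$ coming from the stabilizers of $\lambda_{v_0}$ and carrying out the canonical-bundle-formula computation for the Seifert $\CC^*$-bundle $X\setminus\{x_0\}\to\hat X$, so that the resulting positivity condition emerges with exactly the denominators $\mu(\rho_P)$ and exactly the constant $2$; one must also check carefully that the preliminary toric modification removes every other potential obstruction, so that the single blow-up really detects all relevant discrepancies. For $\dim X=2$ this should reproduce the known criterion for log-terminal $\CC^*$-surfaces, which is a useful consistency check on the numerology.
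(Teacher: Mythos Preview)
Your strategy is sound---the ``cone is log-terminal iff the base pair is log Fano'' paradigm does work here---but the paper takes a much more direct route that avoids your self-identified ``main obstacle'' entirely. Rather than passing to $\proj$ and analysing a Seifert orbifold boundary, the paper simply replaces $\D$ by the two-chart divisorial fan $\{\D+\emptyset\otimes Q_1,\ \D+\emptyset\otimes Q_2\}$ for any $Q_1\neq Q_2\in Y$; this makes both loci affine, so by Theorem~\ref{sec:thm-toric-sing} the resulting $\tilde X$ has only formally toric (hence automatically log-terminal) singularities, and the exceptional divisors of $\psi\colon\tilde X\to X$ are precisely the $D_\rho$ for non-extremal rays $\rho$. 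No preliminary simplicialisation and no blow-up of the vertex are needed. Since $\ell K_X=\divisor(f\chi^{\ell u_K})$ for a single weight $u_K\in M_\QQ$, Proposition~\ref{sec:prop-cartier2weil} and Theorem~\ref{sec:thm-canonical-divisor} give the discrepancy of $D_\rho$ directly, reducing log-terminality to $\langle u_K,n_\rho\rangle<0$ for each non-extremal $\rho$. A short row manipulation of the $\QQ$-Gorenstein linear system of Remark~\ref{sec:rem-gorenstein-index} (add the rows for the vertices $\rho_P$, scaled by $1/\mu(\rho_P)$, to the degree-zero row) then yields $\langle u_K,v\rangle=\deg K_Y+\sum_P\bigl(1-\tfrac1{\mu(\rho_P)}\bigr)$ for the vertex $v=\sum_P\rho_P\in\deg\D$ lying on $\rho$, and the genus obstruction and the constant $2$ drop out from $\deg K_{\PP^1}=-2$. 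Your approach buys a conceptual explanation of why the Platonic-triple shape appears and would generalise more readily to higher complexity; the paper's buys brevity and sidesteps all orbifold bookkeeping.
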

\begin{proof}
  We may resolve the singularities by first replacing $\D$ by a divisorial fan $\fan$ consisting of $\D_1 = \D + \emptyset \otimes Q_1$ and $\D_2 = \D + \emptyset \otimes Q_2$, for $Q_1 \neq Q_2 \in Y$. 
  $\D_1, \D_2 \subset \D$ define two birational maps which glue to a birational proper map $\psi: \tilde{X}=\pdv(\fan) \rightarrow \pdv(\D)$, having $\sum_{\rho} D_\rho$ as exceptional divisor, where $\rho$ runs over all non-extremal rays of $\tail \D$.

$\D_1$ and $\D_2$ may still have toric singularities which correspond to the cones $\delta_P$. These can be resolved by 
toric methods (Cf. \cite[2.6]{MR1234037}) which result in a subdivision of polyhedral coefficients. 
But toric pairs (X,B) are log-terminal so it is enough to control the discrepancies of $\psi$.

Because $X$ is $\QQ$-Gorenstein we have $\ell K_X = \divisor (f \chi^{\ell u_K})$. So the pull back via $\psi$ is given by the same function (by identifying the function fields of $X$ and $\tilde{X}$). By proposition~\ref{sec:prop-cartier2weil} and theorem~\ref{sec:thm-canonical-divisor} we get
$$K_{\tilde{X}}-\psi^*K_X=\sum_\rho (1-\langle  u_K, \rho \rangle)D_\rho,$$
where $\rho$ runs over the non-extremal rays of $\tail \fan$.

  We consider $A:=A(\supp \D \cup \supp K_Y)$ as above. We choose a vertex $v \in \deg \D \subset \tail \D$. Then for every polyhedral coefficient we have a vertex $v_P \in \D_P$, such that $v=\sum_P v_p$.
Now, for every ray $\rho \subset \tail \D$ we have the relation
$\langle u_K, \rho \rangle < 0$ for non-extremal rays this is exactly the log-terminal condition for extremal rays we even know that $\langle u_K, \rho \rangle -1$.

We take the corresponding row of the vertices $v_P$ of $A \left(\begin{smallmatrix}a \\ u_K \end{smallmatrix}\right)=(K_X)$ divide by $\mu(v_P)$  and add them to  the first row. 
  This gives 
  $$\left(0,\ldots,0, v \right){a \choose u_K } = \deg K_Y + \sum_P \left(1 - \frac{1}{\mu(v_P)}\right)$$
The left hand side has to be negative, since $v \in \deg \D$, but the right hand side is negative if and only if $Y=\PP^1$ and
$\sum_P \left(1 - \frac{1}{\mu(v_P)}\right)<2=-\deg K_{\PP^1}$.
\end{proof}

\begin{rem}
\label{sec:rem-quotient-sing}
  Consider the case of a polyhedral divisor $\D = \sum_P [a_P,\infty)\otimes P$
  with locus $\PP^1$ and $M=\ZZ$. The proposition tells us that $\pdv(\D)$ is log-terminal if either at most two $a_P$ are non-integers, which means that we have a cyclic quotient singularity or three values $a_P$ are non-integers, which we denote by $\frac{e_i}{m_i}, i=1,2,3$ and $(m_1,m_2,m_3)$ is one of the Platonic triples $(2,2,m)$, $(2,3,3)$, $(2,3,4)$, $(2,3,5)$. This means that we have a quotient singularity. This is a classical result of Brieskorn\cite[Satz 2.10.]{MR0222084}. He denoted the  corresponding quotient singularity (or its resolution graph, respectively) by \[\langle \sum_P \lceil a_P \rceil; m_1,-e_1;m_2,-e_2;m_3,-e_3 \rangle,\]
where the entries $-e_i$ are understood as values in $\ZZ/m_i\ZZ$. This data encodes the resolution graph of the singularity as follows (c.f. \cite{MR0222084}). Look at the continued fraction 
\[\frac{p_i}{q_i}=b_1-\frac{1}{b_2-\frac{1}{b_3-\dots}}.\]
The resolution graph consisting of a chain of vertices with self intersection numbers $-b_i$ is denoted by $\langle p_i,q_i \rangle$. Now, the graph which is obtained by gluing three of this chains together at a new vertex with self intersection number $-b$ is denoted by $\langle b; q_1,p_1;q_2,p_2;q_3,p_3 \rangle$.

\end{rem}

\section{Del Pezzo $\CC^*$-surfaces}
\label{sec:del-pezzo}
As an application we want to classify log del Pezzo surfaces, i.e. projective surfaces with $\QQ$-Cartier and ample canonical divisor.

This was done completely for Gorenstein index $\ell \leq 2$ in \cite{MR1009466}.  We will concentrate on the case of $\CC^*$ surfaces with $\rho \leq 2$ and discuss the situation for $\ell = 3$ in full detail.

Here we are interested in non-toric $\CC^*$ surfaces only. There are several reasons for this decision. One reason is that toric surfaces admit infinitely many different $\CC^*$-action that means we are not able to give a finite classification in our language.
One of the features of our description is, that we are able to describe families of del Pezzo surfaces. But toric ones do not live in families.
On the other hand the machinery to handle the toric cases can be found in \cite{Dais:arXiv0709.0999} for Picard number $1$ and $\ell = 3$ and in
\cite{CambridgeJournals:7229856} for the general toric case.

\subsection{Multidivisors}
For the case of $\CC^*$-surfaces we can simplify our description by divisorial fans. A $\CC^*$-surface is completely determined by the marked slices of a corresponding divisorial fan.
Here the slices are subdivisions of $\QQ$ which can be described by their sets of boundary points. This lead to the following 

\begin{defn}
  A {\em multidivisor} is formal finite sum 
  $$\fan =  \sum_P \fan_P \otimes P \text{ together with  marks }\subset \{\QQ^-,\QQ^+\},$$

  Here, $P$ runs over the points of $Y$ and $\emptyset \neq \fan_P \subset \QQ$ are finite sets of rational numbers. And only finitely many of them are allowed to differ from $\{0\}$.

  a mark on $\QQ^-$, $\QQ^+$ is only allowed if $\sum a^+_P > 0$ or $\sum a_P^- < 0$ respectively. Here $a_P^\pm$ denotes the least or greatest value in $\fan_P$, respectively.
\end{defn}

The condition on $\deg \fan$ ensures the the set of marked slices really can be realised by a divisorial fan. 

Multidivisors form a semi-group which includes the set of regular $\QQ$-divisors as a subgroup. So we have the relation of linear equivalence on them 
$$\fan\sim\fan' \Leftrightarrow \fan = \fan + \divisor(f).$$

Now theorem~\ref{sec:thm-fan-iso-codim1} implies the following
\begin{cor}
  \label{sec:cor-multidiv-isomorph}
  For every normal $\CC^*$-surface $X$ there is a multidivisor $\fan$, such that $\pdv(\fan)=X$ and two multidivisors $\fan,\fan'$ give equivariantly  isomorphic $\CC^*$-surfaces iff $\fan \sim \pm \varphi^*\fan'$ for an isomorphism $\varphi:Y\rightarrow Y'$.
\end{cor}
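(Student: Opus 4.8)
The plan is to derive this corollary directly from Theorem~\ref{sec:thm-fan-iso-codim1} by translating the statement about divisorial fans on a curve into the language of multidivisors. First I would observe that a $\CC^*$-surface is the case $\dim N = 1$ of the general setup: here $N \cong \ZZ$, so $N_\QQ \cong \QQ$, and a complete polyhedral subdivision of $N_\QQ$ is nothing but a finite set of points together with two unbounded rays pointing towards $\pm\infty$; equivalently it is encoded by the finite set $\fan_P \subset \QQ$ of its vertices. The marks on $\QQ^-$ and $\QQ^+$ record exactly which of the two unbounded polyhedra come from a polyhedral divisor with complete locus, which by the discussion preceding Theorem~\ref{sec:thm-fan-iso-codim1} is precisely the datum of being \emph{marked}. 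Thus giving a multidivisor is the same as giving the marked slices of a divisorial fan on $Y$, and I would invoke the remark after the definition of multidivisor (the condition on $\deg\fan$) to see that every such collection of marked slices is actually realised by a genuine divisorial fan $\fan$ with $\pdv(\fan) = X$. Combined with the fact from \cite{MR2207875} that every normal affine variety with torus action arises from a polyhedral divisor, and hence every $\CC^*$-surface from a divisorial fan on a curve, this gives the existence half: for every normal $\CC^*$-surface $X$ there is a multidivisor $\fan$ with $\pdv(\fan) = X$.

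For the classification statement I would apply Theorem~\ref{sec:thm-fan-iso-codim1} verbatim. It says $\pdv(\fan) \cong \pdv(\fan')$ equivariantly if and only if there are isomorphisms $F: N' \to N$ and $\varphi: Y \to Y'$ with $\fan_{\varphi(P)} = F(\fan'_P) + \sum_i a^i_P e_i$ respecting marks, where the $D_i = \sum_P a^i_P P$ are principal. Since $N \cong \ZZ$ here, the only lattice automorphisms $F: N' \to N$ are $v \mapsto \pm v$, so $F(\fan'_P)$ is either $\fan'_P$ or $-\fan'_P$ (with the marks on $\QQ^\pm$ swapped in the minus case); and there is a single basis element $e_1$, so the translation term is a single principal divisor $\sum_P a_P P = \divisor(f)$. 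Hence the condition reads $\fan_{\varphi(P)} = \pm\fan'_P + \divisor(f)$ for some $\varphi$ and some rational function $f$ on $Y$, which is exactly $\fan \sim \pm\varphi^*\fan'$ in the linear-equivalence notation introduced just above the corollary. I would note in passing that the small typo in the definition of linear equivalence ($\fan \sim \fan' \Leftrightarrow \fan = \fan' + \divisor(f)$) is what is meant.

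The only genuinely delicate point, and the step I expect to be the main obstacle, is the bookkeeping of marks under the sign flip $F = -\mathrm{id}$: one must check that flipping $N$ interchanges the roles of $\QQ^+$ and $\QQ^-$ and that the admissibility condition on marks ($\sum a^+_P > 0$, resp.\ $\sum a^-_P < 0$) is preserved, which follows because negation sends $a^+_P$ to $-a^-_P$ and reverses the degree inequality — consistent with the fact that a polyhedral divisor with complete locus must have $\deg\D$ strictly contained in its tail cone. Everything else is a routine unwinding of definitions, so I would keep the written proof to a couple of sentences citing Theorem~\ref{sec:thm-fan-iso-codim1} and the realisability remark.
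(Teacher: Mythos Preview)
Your proposal is correct and follows precisely the approach the paper indicates: the paper offers no explicit proof, stating only that the corollary is implied by Theorem~\ref{sec:thm-fan-iso-codim1}, and your argument is exactly the specialisation of that theorem to $N\cong\ZZ$ (where the only lattice automorphisms are $\pm\mathrm{id}$ and the translation term is a single principal divisor) together with the realisability remark. The bookkeeping of marks under the sign flip that you flag is the only point requiring any care, and you handle it correctly.
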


\begin{rem}
  A multidivisor $\fan$ describes a toric surface iff $Y = \PP^1$ and
  it has at most two coefficients which are not single integers, i.e.
  $$\fan \sim  \fan'=\fan'_Q \otimes Q + \fan'_P \otimes P.$$
\end{rem}

A description of affine $\CC^*$ surfaces and their completions by sets of $\QQ$-divisors (which is essentially the same as what we do) was given in \cite{MR2020670} and \cite{MR2327238}. A description of non-singular complete $\CC^*$-surfaces by graphs can be found in \cite{MR0460342}. 

In both papers one finds a classification of fixed points on $\CC^*$ surfaces:
\begin{description}
\item[parabolic] the fixed point is adjacent to exactly one maximal orbit. Parabolic fixed points gather in curves.
\item[elliptic] the fixed points is adjacent to all maximal orbits in their neighbourhood. Elliptic fixed points are always isolated.
\item[hyperbolic] the fixed points is adjacent to exactly two maximal orbits. Hyperbolic fixed points are always isolated.
\end{description}

\begin{rem}
\label{sec:rem-fix-points}  
From \cite{MR2020670} and \cite{MR2207875} we know that parabolic fixed points correspond to a unmarked side and a point on $Y$, while elliptic fixed points correspond to a marked side of our multidivisor and can be seen as the result of a contraction of a curve of parabolic fixed points \cite[theorem~10.1.]{MR2207875}.
\end{rem}

\begin{prop}
  Every non-toric log Del Pezzo $\CC^*$-surface has one or two elliptic fixed points.
\end{prop}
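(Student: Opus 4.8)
The plan is to argue via the classification of fixed points on $\CC^*$-surfaces and the structure of a divisorial fan on a curve coming from a del Pezzo surface. First I would observe that since $X$ is projective, its divisorial fan $\fan$ is complete, so every slice $\fan_P$ is a complete subdivision of $\QQ = N_\QQ$; hence each slice has a leftmost and rightmost (possibly unbounded) piece, and the two ``ends'' of $\fan$ --- the tail rays $\QQ^-$ and $\QQ^+$ --- are each either marked (giving an elliptic fixed point) or unmarked (giving a one-parameter family of parabolic fixed points in a curve $D_\rho$), by remark~\ref{sec:rem-fix-points}. So the number of elliptic fixed points is between $0$ and $2$, and I must rule out the case of zero elliptic fixed points for non-toric log del Pezzo surfaces.

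Next I would treat the case where both tail rays are unmarked. Then both extremal rays $\rho_- = \QQ^-$ and $\rho_+ = \QQ^+$ lie in $\xrays(\fan)$, giving prime divisors $D_{\rho_-}, D_{\rho_+}$, and by the ampleness criterion (corollary after theorem~\ref{sec:thm-ample}) applied to $-K_X$ I get, for each of the two (unmarked) maximal tail cones $\sigma \in \{\QQ^{\leq 0}, \QQ^{\geq 0}\}$, the strict inequality $\deg\bigl(-(-K_X)|_\sigma(0)\bigr) > 0$, i.e. $-K_X|_\sigma(0)$ has strictly negative degree. On the other hand, using theorem~\ref{sec:thm-canonical-divisor} together with proposition~\ref{sec:prop-cartier2weil} I can write down $K_X|_\sigma(0)$ explicitly: its degree on $Y$ is a sum of the form $\deg K_Y + \sum_P (1 - \tfrac{1}{\mu(v_P^\sigma)})$, exactly as in the proof of the log-terminal proposition above, where $v_P^\sigma$ is the vertex of $\fan_P$ on the boundary between the $\sigma$-piece and its neighbour. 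The requirement that $-K_X|_\sigma(0)$ have negative degree forces $\deg K_Y + \sum_P(1-\tfrac1{\mu(v_P^\sigma)}) < 0$, hence $Y = \PP^1$ and $\sum_P(1-\tfrac1{\mu(v_P^\sigma)}) < 2$, for \emph{both} choices of $\sigma$.

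The final step is to derive a contradiction with non-toricity. If both ends are unmarked and $Y = \PP^1$, then I claim $\fan$ is toric, i.e. (by corollary~\ref{sec:cor-toric}) all but at most two slices $\fan_P$ are lattice translates of $\tail\fan$. The condition $\sum_P(1-\tfrac1{\mu(v_P^\sigma)}) < 2$ for each $\sigma$ says that at most two of the leftmost boundary vertices and at most two of the rightmost boundary vertices are non-lattice; but a del Pezzo (hence $\QQ$-factorial, by our assumptions in this section) surface has Picard number $\rho \leq 2$, and by theorem~\ref{sec:thm-picard-rank}, $\rho_X = 1 + \#\xrays(\fan) + \sum_P(\#\fan_P^{(0)} - 1) - 1$; with $\#\xrays(\fan) = 2$ this already forces $\sum_P(\#\fan_P^{(0)}-1) \leq \rho_X - 2 \leq 0$, so every slice has exactly one vertex and all slices are single lattice translates of $\tail\fan$ --- hence $X$ is toric, contradiction. (If $\rho_X = 2$ is allowed with one unmarked end one re-runs the count; either way the bound $\rho \leq 2$ together with $\#\xrays \geq 2$ collapses the slices.) So at least one end is marked, giving at least one elliptic fixed point, and at most two ends are marked, giving at most two; this proves the proposition.

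The main obstacle I expect is the last step: carefully reconciling the Picard-number bookkeeping (theorem~\ref{sec:thm-picard-rank}) with the degree inequalities from ampleness so as to genuinely force toricity when there are no elliptic fixed points, rather than merely bounding the number of non-integral slices --- in particular handling the boundary case $\rho_X = 2$ and making sure the marked-versus-unmarked contribution to $\xrays(\fan)$ is counted correctly.
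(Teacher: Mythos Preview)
There is a genuine gap. Your final step rests on the claim that a log del Pezzo surface has Picard number $\rho_X\le 2$, but this is simply false: smooth del Pezzo surfaces already realise every Picard number up to $9$, and the proposition is stated for \emph{all} non-toric log del Pezzo $\CC^*$-surfaces, not only those of small rank. So the Picard-rank formula cannot be used to collapse the slices, and your contradiction does not go through.

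There is also a gap earlier. The degree you compute for the ampleness condition on an unmarked tail cone $\sigma$ is not $\deg K_Y+\sum_P(1-\tfrac{1}{\mu(v_P^\sigma)})$; the constant term $a^h_P(\sigma)$ of $h_P$ on the extremal polyhedron involves the actual vertex position $a_P^\pm$, not only its denominator. Concretely, with $h_0(v)=|v|$ (as both rays are extremal) one has $h_P(a_P^+) = \tfrac{1}{\mu(a_P^+)}-1-K_Y(P)$, hence $a^h_P(\sigma_+)=\tfrac{1}{\mu(a_P^+)}-1-K_Y(P)-a_P^+$. The term $-a_P^+$ cannot be dropped, and the analogy with the log-terminal computation (which concerns marked $\D$ and the weight $u_K$) does not apply here.

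The paper's argument avoids Picard rank entirely. It writes down $h_0$ as above, evaluates $h_P$ at the extremal vertices, and bounds the resulting $a^\pm$ by $a^+\le -\lceil a_P^+\rceil - K_Y(P)$ and $a^-\le \lfloor a_P^-\rfloor - K_Y(P)$. Since a non-toric multidivisor has at least three slices that are not single integers, one has $\lfloor a_P^-\rfloor-\lceil a_P^+\rceil\le -1$ at those points, and summing gives $\sum_P a_P^+\le -2-\deg K_Y$ or $\sum_P a_P^-\le -2-\deg K_Y$, directly contradicting the ampleness criterion of theorem~\ref{sec:thm-ample} for $-K_X$. This is the step you should replace your Picard-rank argument with.
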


\begin{proof}
  Remark~\ref{sec:rem-fix-points} implies that we have at most $2$ elliptic fixed points on a $\CC^*$-surface.

  We now assume that we have a log del Pezzo $\CC^*$-surface without elliptic fixed points. It corresponds to a multidivisor on $\PP^1$ without marks and with at least tree coefficients which are not pure integers.

We denote by $h$ the support function corresponding to the canonical divisor of $\pdv(\fan)$. Because both rays in the tail fan are extremal by  theorem~\ref{sec:thm-canonical-divisor} we have 
$$h_0(v)=
\begin{cases}
  v & \text{, for }v > 0\\
  -v & \text{, else}
\end{cases}
$$
Let $\fan_P$ one of the non-trivial coefficients. From theorem~\ref{sec:thm-canonical-divisor} and proposition~\ref{sec:prop-cartier2weil} we know that
\begin{eqnarray*}
  \frac{1}{m^-}-1-K_Y(P)&=&h_P(\frac{e^-}{m^-})=-\frac{e^-}{m^-}+a^-\\
  \frac{1}{m^+}-1-K_Y(P)&=&h_P(\frac{e^+}{m^+})=\frac{e^+}{m^+}+a^+.
\end{eqnarray*}
This implies
$a^- \leq \lfloor \frac{e^-}{m^-} \rfloor - K_Y(P)$ and
$a^+ \leq -\lceil \frac{e^+}{m^+} \rceil- K_Y(P)$.
Since $\lfloor \frac{e^-}{m^-} \rfloor -  \lceil \frac{e^+}{m^+} \rceil \leq -1$ and there are at least $3$ non-trivial coefficients we get $\sum_P a_P^-\leq -2-\deg K_Y$ or  $\sum_P a_P^+ \leq -2 - \deg K_Y $. By theorem \ref{sec:thm-ample} this contradicts the ampleness of $D_{-h}$.
\end{proof}

\subsection{Classification}
We can use corollary~\ref{sec:thm-picard-rank} to obtain the structure of multidivisors giving rise to a $\CC^*$-surface with Picard rank $1$. There are two cases:
\begin{enumerate}
\item Marks on both sides and exactly one coefficient $\fan_P$ with $\#\fan^{(0)}_P=2$, i.e. two elliptic and one hyperbolic fixed point.\label{item:pic1-two-elliptic}
\item One mark and all coefficients have cardinality $1$, i.e. one elliptic fixed point and a curve consisting of parabolic fixed points.\label{item:pic1-one-parabolic}
\end{enumerate}

Now we want to classify non-toric log del Pezzo surfaces of Picard rank $1$. Non-toric surfaces correspond to multidivisors with at least three coefficients different from a single integer.

\subsubsection*{Classification in case \ref{item:pic1-two-elliptic}}
Because of remark~\ref{sec:rem-quotient-sing} we have to consider three or four coefficients, namely at most three non-integral rationals and one set of cardinality two.

For a given multidivisor sections~\ref{sec:invariant-divisors} and \ref{sec:singularities} give us the tools to decide whether it is log del Pezzo of a given index or not. So in order to enumerate all log Del Pezzo $\CC^*$-surfaces of index $\ell$ with two elliptic fixed points we consider the following multidivisor
$$\fan=\{a_1\}\otimes P_1 +  \{a_2\}\otimes P_2 + \{a_3\}\otimes P_3 + \{a_-,a_+\}\otimes P_0,$$
with $a_+>a_-$, $a_+ + \sum_{i=1}^3a_i > 0$, $a_- + \sum_{i=1}^3a_i < 0$ and $a_i=\frac{p_i}{q_i}$, for $i=+,-,1,2,3$ and $1\leq p_i < q_i$ for $i=1,2,3$.

We now try to derive bounds for $q_1,q_2,q_3,q_+,q_-$ in order to get a finite classification. 

The left hand sides of the subdivisions result from a polyhedral divisor
$$\D_- = (-\infty,a_1]\otimes P_1 + (-\infty, a_2]\otimes P_2 + (-\infty,a_3]\otimes P_3 + (-\infty,a_-]\otimes P_0$$
and the right hand sides from 
$$\D_+ = [a_1,\infty)\otimes P_1 + [a_2,\infty)\otimes P_2 + [a_3,\infty)\otimes P_3 + [a_+,\infty)\otimes P_0.$$

Because of remark~\ref{sec:rem-quotient-sing} we have to consider five cases
\begin{enumerate}
\item[1a)] 
$(q_1,q_2,q_3)$ may be one of the Platonic triples
 $(2,2,k),(2,3,3)$, $(2,3,4)$, $(2,3,5)$ and $q_+= q_- =1$.
 
 From $$(\frac{p_3}{q_3}+a_+ + \frac{1}{2}+\frac{1}{2})u_+=1-\frac{1}{q_3}-\frac{1}{2}-\frac{1}{2},$$
with $\ell u_+ \in \ZZ$, we get $(p_3+p_+q_3)|\ell$, and analogue from
  $$(\frac{p_3}{q_3}+a_-+\frac{1}{2}+\frac{1}{2})u_-=1-\frac{1}{q_3}-\frac{1}{2}-\frac{1}{2}$$
we get $(p_3+p_-q_3)|\ell$. Together with $a_++a_- \geq 1$ this implies $q_3 < \ell$. 

\item[1b)] W.l.o.g. $q_3=1$ and $(q_1,q_2,q_+)$ and $(q_1,q_2,q_-)$ are both one of the triples  $(2,3,3)$, $(2,3,4)$, $(2,3,5)$.\\
\item [1c)] $q_3=q_+=q_-=1$ and $q_1,q_2 \in \NN$.\\
 W.l.o.g we may assume that $a_++a_1+a_2>\frac{1}{2}$.
And we have $(a_++ a_2 + a_3)u_k= -\frac{1}{q_1} -\frac{1}{q_2}$. 
Because $u_K \in \frac{1}{\ell}\ZZ$ this implies
that $(q_1q_2p_+ + p_1q_2 +p_2q_1)$ divides $\ell (q_1+q_2)$, in particular $(\frac{q_1q_2}{2}) \leq (q_1q_2p_+ + p_1q_2 +p_2q_1) \leq \ell (q_1+q_2)$, which implies that one of $q_1,q_2$ has to be less than $4\ell$. W.l.o.g. let $q_1 < 4\ell$
On the other hand  for suitable $\lambda \neq \mu$
\begin{eqnarray*}
\lambda (q_1q_2p_+ + p_1q_2 +p_2q_1) =  \ell (q_1+q_2)\\
\text{ and }\mu (q_1q_2p_- + p_1q_2 +p_2q_1) = \ell (q_1+q_2)
\end{eqnarray*}
this implies that
\begin{eqnarray*}
(\lambda p_1-\ell)q_2 = (\ell - \lambda p_2+q_2p_+)q_1\\
\text{ and }(\mu p_1-\ell)q_2 = (\ell - \mu p_2+q_2p_-)q_1.
\end{eqnarray*}
From this we get that $q_2 | (\lambda-\mu)p_2q_1$.

Let's assume that $q_2 > 2\ell q_1$ then we must have $|\lambda|,|\mu| < \ell+1$. Hence, $|\lambda-\mu| < 2\ell +2$. Because $p_2$ and $q_2$ are coprime we get $\frac{q_2}{4\ell} < \frac{q_2}{q_1} < |\lambda-\mu| < 2 \ell + 2$. So we get the bounds $q_1 < 4\ell$ and $q_2 < 8(\ell^2 + \ell)$

\item [1d)] $q_3=1,q_2=2$ and furthermore $q_+,q_- \in \{1,2\}$ and $q_1 \in \NN$.

  In a similar way as in the last case we get $q_1<4 \ell - 4$
\item[1e)]$q_1=q_2=2$, $q_3=1$, $q_-,q_+ \in \NN$.
Because $a_++\frac{1}{2}+\frac{1}{2} > 0$ and $a_-+\frac{1}{2}+\frac{1}{2} < 0$ we get $p_++q_+>0$ and $p_-+q_- < 0$

From $$(\frac{p_+}{q_+}+\frac{1}{2}+\frac{1}{2})u_+=1-\frac{1}{q_+}-\frac{1}{2}-\frac{1}{2},$$
with $\ell u \in \ZZ$, we get $(p_++q_+)|\ell$, and analogue $(p_-+q_-)|\ell$.

Now we consider the affine function $h_{P_0}|{\scriptscriptstyle[\frac{p_-}{q_-},\frac{p_-}{q_-}]}$ realising the canonical divisor locally, i.e. $h(a_-)=1-\frac{1}{q_-}+K_Y(P_0)$ and $h(a_+)=1-\frac{1}{q_+}+K_Y(P_0)$.

We get $$h_{P_0}(1)=\frac{(p_++q_+)-(p_-+q_-)}{(p_++q_+)q_--(p_-+q_-)q_+} + 1 + K_Y(P_0).$$

This has to be an element of $\frac{1}{\ell}\ZZ$ so we have $\frac{1}{\ell} \leq \frac{(p_++q_+)-(p_-+q_-)}{(p_++q_+)q_--(p_-+q_-)q_+}$. W.l.o.g. let $q_-\leq q_+$ this implies, that $q_- \leq \ell$ and $q_+ \leq 2\ell^2$.


\end{enumerate}

\subsubsection*{Classification in case \ref{item:pic1-one-parabolic}}
For case~\ref{item:pic1-one-parabolic} we have exactly three non-integral rationals as coefficients 
$a_1 = \frac{p_1}{q_1}$, $a_2 = \frac{p_2}{q_2}$, $a_3 = \frac{p_3}{q_3}$.

Because of \ref{sec:cor-multidiv-isomorph}. 
we may assume that we have a mark on $\QQ^+$ and $0 < a_1,a_2 < 1$. So the right sides of all subdivisions result from one polyhedral divisor 
$$\D_+ = [a_1,\infty)\otimes P_1 + [a_2,\infty)\otimes P_2 + [a_3,\infty)\otimes P_3.$$

Now note that for $u_K=\frac{r}{s}$ we have
$$(a_1+ a_2 + a_3)\frac{r}{s} = 1-\frac{1}{q_1} -\frac{1}{q_2}-\frac{1}{q_3}$$
with $K_{\pdv(\D)}= \frac{1}{s}\divisor(f \chi^{r})$ C.f

As $u_K \in \frac{1}{\ell}\ZZ$ we must have $a_1+a_2+a_3 < 2\ell$.
We know that $(q_1,q_2,q_3)$ is one of the Platonic triples.
For  $(2,3,3)$, $(2,3,4)$, $(2,3,5)$ there are only finitely many possibilities for corresponding $p_1,p_2,p_3$. $(36 \ell = 2\ell \cdot (1\cdot 2 \cdot 2 + 1\cdot 2 \cdot 3 + 1\cdot 2 \cdot 4))$

For  $(q_1,q_2,q_3) =(2,2,m)$ we get
$u_K = q_3+p_3$ this implies that $|q_3+p_3| \leq \ell$. Because of \ref{sec:rem-quotient-sing} we know that $X$ has 
cyclic quotient singularity $Z_{q_3,p_3}$. This singularity has Gorenstein index $\frac{q_3}{\gcd(q_3,1-p_3)}$. This implies that $q_3 < \ell^2$. So we have finitely many pairs $(p_3,q_3)$ to check. 

\subsubsection{Enumerating the results}
In both cases for a given index $\ell$ there is a finite number of multidivisor to be checked if they have the desired index and if they fulfil the Fano property. We check the Fano property by using theorem~\ref{sec:thm-ample} and the Gorenstein index with the help of remark~\ref{sec:rem-gorenstein-index} or theorem~\ref{sec:thm-toric-sing}, respectively. 

Doing this for $\ell=1,2,3$ we are able to state the following theorems. The results for $\ell=1,2$ may be compared with \cite{MR2227002}, \cite{MR1933881}, \cite{MR961326}.

\begin{thm}
  There are the following Gorenstein log del Pezzo $\CC^*$-surfaces with Picard rank $1$.
  \begin{enumerate}
  \item  11 surfaces with two elliptic fixed points \label{item:idx1-2fp}
  \item  1 family over $\A^1$ of surfaces with two elliptic fixed points \label{item:idx1-family}
  \item  1 surface with one elliptic fixed point \label{item:idx1-1fp}
  \item  5 toric surfaces (c.f. \cite{MR2298756},\cite{Dais:arXiv0709.0999})
  \end{enumerate}
\end{thm}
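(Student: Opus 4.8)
The plan is to carry out the enumeration outlined in Section~\ref{sec:del-pezzo} explicitly for $\ell = 1$ and collect the surfaces that survive. By Theorem~\ref{sec:thm-picard-rank} a $\CC^*$-surface of Picard rank~$1$ falls into case~\ref{item:pic1-two-elliptic} or case~\ref{item:pic1-one-parabolic}; I treat the two cases separately. For case~\ref{item:pic1-two-elliptic} the relevant multidivisor is
$$\fan = \{a_1\}\otimes P_1 + \{a_2\}\otimes P_2 + \{a_3\}\otimes P_3 + \{a_-,a_+\}\otimes P_0,$$
and the sub-case analysis 1a)--1e) (applied with $\ell = 1$) bounds all the denominators $q_i, q_\pm$. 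In each sub-case the bounds are extremely restrictive: for instance in 1a) we get $q_3 < \ell = 1$, a contradiction, so 1a) contributes nothing; similarly $q_\pm = 1$ forces $q_1, q_2$ to be bounded in 1c)--1e), and 1b) asks for two Platonic triples sharing $(2,3)$, which again forces boundedness. So only finitely many candidate multidivisors remain, and for each I would (i) verify the Gorenstein property via Remark~\ref{sec:rem-gorenstein-index} — here demanding that the solution vector $a$ of $A\cdot\left(\begin{smallmatrix}a\\u_K\end{smallmatrix}\right) = (K_X)$ yields a \emph{principal} divisor $\sum a_i P_i$, i.e. index exactly~$1$ — and (ii) check ampleness of $-D_h$ via Theorem~\ref{sec:thm-ample}, i.e. that $h$ is strictly concave and $\deg(-h|_\sigma(0)) > 0$ on the (unmarked) tail cones. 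Counting the survivors up to the equivalence $\fan \sim \pm\varphi^*\fan'$ of Corollary~\ref{sec:cor-multidiv-isomorph} gives the $11$ surfaces of~\ref{item:idx1-2fp}, plus one genuine one-parameter family of such multidivisors (where, say, the integer-point positions of $P_0$ may be shifted along $\A^1$ without affecting the Gorenstein/Fano conditions), yielding~\ref{item:idx1-family}.

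For case~\ref{item:pic1-one-parabolic} the multidivisor has exactly three non-integral coefficients $a_i = p_i/q_i$ with a mark on $\QQ^+$, and $(q_1,q_2,q_3)$ must be a Platonic triple. The relation $(a_1+a_2+a_3)\,u_K = 1 - 1/q_1 - 1/q_2 - 1/q_3$ together with $u_K \in \frac1\ell\ZZ = \ZZ$ gives $a_1 + a_2 + a_3 < 2$, which for the triples $(2,3,3), (2,3,4), (2,3,5)$ leaves only finitely many $(p_i)$, and for $(2,2,m)$ the Gorenstein-index computation of the cyclic quotient singularity $Z_{q_3,p_3}$ (index $q_3/\gcd(q_3, 1-p_3)$) forces $q_3$ bounded. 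Again I would run the Gorenstein test of Remark~\ref{sec:rem-gorenstein-index} and the ampleness test of Theorem~\ref{sec:thm-ample} on each candidate; exactly one survives, giving~\ref{item:idx1-1fp}. Finally the $5$ toric surfaces of item~(4) are not produced by this analysis (we restricted to non-toric $\fan$, i.e. at least three non-integral coefficients) but are quoted from the toric classifications in \cite{MR2298756} and \cite{Dais:arXiv0709.0999}.

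The main obstacle is not conceptual but the sheer bookkeeping: for $\ell = 1$ the bounds from 1a)--1e) and from the Platonic constraint leave a manageable but nonzero list of multidivisors, and for \emph{each} one must honestly evaluate the matrix $A(\supp\D \cup \supp K_Y)$, solve for $(a, u_K)$, confirm that $\sum a_i P_i$ is principal on $\PP^1$ (equivalently $\deg = 0$, automatic here, so the only content is integrality of the coefficients after reduction), and then check strict concavity of $h$ together with positivity of the degrees $-\sum_P a_P^h(\sigma)$ on the two tail rays. I expect the delicate point to be making sure no candidate is double-counted: two multidivisors related by an automorphism $\varphi$ of $\PP^1$ permuting the marked points $P_1,P_2,P_3$ (or by the sign flip coming from $N \to N$, $v \mapsto -v$, which swaps $a_- \leftrightarrow a_+$ and the marks $\QQ^- \leftrightarrow \QQ^+$) must be identified, so the final tally of $11$ (resp. $1$, resp. $1$) is really the count of equivalence classes.
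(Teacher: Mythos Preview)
Your approach is essentially the paper's own: the paper does not give a self-contained proof of this theorem but simply states that, having established the bounds in sub-cases 1a)--1e) and in the one-elliptic-fixed-point case, one carries out the finite enumeration for $\ell=1$ and checks each candidate against Theorem~\ref{sec:thm-ample} and Remark~\ref{sec:rem-gorenstein-index}, exactly as you describe.

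Two small corrections are worth noting. First, the bound $q_3 < \ell$ in sub-case 1a) is derived in the paper only for the triple $(2,2,k)$ (the computation uses $a_1+a_2=\tfrac12+\tfrac12$), so for $\ell=1$ your ``contradiction'' only rules out $(2,2,k)$; the remaining Platonic triples $(2,3,3),(2,3,4),(2,3,5)$ in 1a) are already bounded and must simply be checked directly (they contribute nothing). Second, your description of the one-parameter family in \ref{item:idx1-family} is off: it does not come from ``shifting integer-point positions of $P_0$'' but from the fact that the surviving multidivisor
\[
\{-2,-1\}\otimes P_0 + \{\tfrac12\}\otimes P_1 + \{\tfrac12\}\otimes P_2 + \{\tfrac12\}\otimes P_3
\]
is supported at \emph{four} points of $\PP^1$, so the cross-ratio of $P_0,P_1,P_2,P_3$ is a genuine modulus not absorbed by the equivalence of Corollary~\ref{sec:cor-multidiv-isomorph}. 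Keeping this mechanism in mind is what lets you recognise the family during the enumeration rather than mistaking its members for isomorphic surfaces.
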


\begin{spacing}{1.2}
  \begin{center}
    \begin{tabular}[htbp]{lcccc}
\toprule
   & multidivisor & marks & deg. & sing.\\
  \cmidrule{2-5}
  (\ref{item:idx1-2fp}) &$\{-2,0\} \otimes P_0\ +\ \{\frac{1}{2}\}\otimes P_1  \ +\  \{\frac{1}{2}\}\otimes P_2$  &  $\QQ^\pm$ & $2$  &    $2A_3A_1$\\
  &$\{-1,0\} \otimes P_0\ +\ \{\frac{1}{3}\}\otimes P_1  \ +\  \{\frac{1}{2}\}\otimes P_2$            &  & $5$  &    $A_4$\\
  &$\{-1,0\} \otimes P_0\ +\ \{\frac{1}{3}\}\otimes P_1  \ +\  \{\frac{1}{3}\}\otimes P_2$            &  & $2$   &   $A_5A_2$\\
  &$\{-1,0\} \otimes P_0\ +\ \{\frac{1}{4}\}\otimes P_1  \ +\  \{\frac{1}{2}\}\otimes P_2$            &  & $3$   &   $A_5A_1$\\
  &$\{\frac{-3}{2},-1\} \otimes P_0\ +\ \{\frac{2}{3}\}\otimes P_1 \  +\  \{\frac{1}{2}\}\otimes P_2$ &  & $4$   &   $D_5$\\
  &$\{\frac{-3}{2},-1\} \otimes P_0\ +\ \{\frac{2}{3}\}\otimes P_1 \  +\  \{\frac{2}{3}\}\otimes P_2$ &  & $1$   &   $E_6A_2$\\
  &$\{\frac{-3}{2},-1\} \otimes P_0\ +\ \{\frac{3}{4}\}\otimes P_1 \  +\  \{\frac{1}{2}\}\otimes P_2$ &  & $2$   &   $D_6A_1$\\
  &$\{\frac{-4}{3},-1\} \otimes P_0\ +\ \{\frac{2}{3}\}\otimes P_1 \  +\  \{\frac{1}{2}\}\otimes P_2$ &  & $3$   &   $E_6$\\
  &$\{\frac{-4}{3},-1\} \otimes P_0\ +\ \{\frac{3}{4}\}\otimes P_1 \  +\  \{\frac{1}{2}\}\otimes P_2$ &  & $1$   &   $E_7A_1$\\
  &$\{\frac{-5}{4},-1\} \otimes P_0\ +\ \{\frac{2}{3}\}\otimes P_1 \  +\  \{\frac{1}{2}\}\otimes P_2$ &  & $2$   &   $E_7$\\
  &$\{\frac{-6}{5},-1\} \otimes P_0\ +\ \{\frac{2}{3}\}\otimes P_1 \  +\  \{\frac{1}{2}\}\otimes P_2$ &  & $1$   &   $E_8$\\
  \cmidrule{2-5}
  (\ref{item:idx1-family}) & $\{-2,-1\} \otimes P_0+\{\frac{1}{2}\}\otimes P_1+\{\frac{1}{2}\}\otimes P_2 + \{\frac{1}{2}\}\otimes P_3$ & $\QQ^\pm$ & $1$ & $2D_4$ \\
  \cmidrule{2-5}
  (\ref{item:idx1-1fp}) & $\{\frac{-1}{2}\}\otimes P_1\ +\   \{\frac{1}{2}\}\otimes P_2\ +\   \{\frac{1}{2}\}\otimes P_3$ & $\QQ^+$ & $2$ & $D_43A_1$\\
\bottomrule
\end{tabular}
\end{center}
\end{spacing}

\begin{thm}
  There are $16$ log del Pezzo $\CC^*$-surfaces with Picard rank $1$ and index $\ell=2$.
  \begin{enumerate}
  \item  9 surfaces with two elliptic fixed points \label{item:idx2-2fp}
  \item  1 surfaces with one elliptic fixed point  \label{item:idx2-1fp}
  \item  7 toric surfaces (c.f. \cite{MR2298756},\cite{Dais:arXiv0709.0999})
  \end{enumerate}
\end{thm}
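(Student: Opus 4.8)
The plan is to run the finite classification scheme set up in the two preceding subsections with $\ell = 2$ and to collect the survivors. First I would dispose of the toric surfaces by invoking the existing classification of toric log del Pezzo surfaces of Picard number $1$ and index $2$ in \cite{MR2298756} (resp.\ \cite{Dais:arXiv0709.0999}); by the Remark following Corollary~\ref{sec:cor-multidiv-isomorph} these are exactly the multidivisors on $\PP^1$ with at most two non-integral coefficients, and one reads off the $7$ of them. For the remaining, genuinely non-toric, surfaces one uses Theorem~\ref{sec:thm-picard-rank}: since $N \cong \ZZ$, the equation $\rho_X = 1$ reduces to $\#\xrays(\fan) + \sum_P (\#\fan_P^{(0)} - 1) = 1$, which leaves precisely the two combinatorial types (\ref{item:pic1-two-elliptic}) and (\ref{item:pic1-one-parabolic}) already isolated above (two elliptic plus one hyperbolic fixed point, or one elliptic fixed point and a parabolic curve).

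Next I would make the finiteness effective. In case (\ref{item:pic1-two-elliptic}), with $\fan = \{a_1\}\otimes P_1 + \{a_2\}\otimes P_2 + \{a_3\}\otimes P_3 + \{a_-, a_+\}\otimes P_0$ and $a_i = p_i/q_i$, the five sub-cases 1a)--1e) together with Remark~\ref{sec:rem-quotient-sing} (the local singularities must be Platonic quotient singularities) bound every denominator: for instance $q_3 = 1$ in 1a) since $q_3 < \ell = 2$, while $q_1 \le 7$ and $q_2 \le 47$ in 1c), and $q_- \le 2$, $q_+ \le 8$ in 1e). In case (\ref{item:pic1-one-parabolic}) one has $a_1 + a_2 + a_3 < 2\ell = 4$ with $(q_1,q_2,q_3)$ a Platonic triple; for $(2,3,3),(2,3,4),(2,3,5)$ this leaves finitely many $(p_1,p_2,p_3)$, and for $(2,2,m)$ it forces $|p_3 + q_3| \le \ell = 2$ and $q_3 < \ell^2 = 4$. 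Hence in both cases there remains an explicit finite list of candidate multidivisors, keeping the standing inequalities $a_+ + \sum a_i > 0 > a_- + \sum a_i$ throughout.

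Then, for each candidate I would check the two defining properties. For the Fano property I would write down the canonical support function $h$ via Theorem~\ref{sec:thm-canonical-divisor} and Proposition~\ref{sec:prop-cartier2weil} (its slope is fixed and its slices $h_P$ are pinned by the vertices of $\fan_P$) and verify, using the Corollary to Theorem~\ref{sec:thm-ample}, that $D_{-h} = -K_X$ is ample, i.e.\ that $-h$ is strictly concave on each slice and has positive degree of $-h|_\sigma(0)$ on the unmarked tail cones. The $\QQ$-Cartier condition and the exact Gorenstein index I would obtain by solving the small linear system $A(\supp\fan \cup \supp K_Y)\cdot(a, u_K)^t = (K_X)$ of Remark~\ref{sec:rem-gorenstein-index} and asking for the least integer $\ell$ making $\ell \sum_P a_P P$ principal on $\PP^1$ (equivalently, clearing the denominators of the $a_P$); for the cyclic-quotient part one may alternatively read the index off the Brieskorn data of Remark~\ref{sec:rem-quotient-sing} through Theorem~\ref{sec:thm-toric-sing}. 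Finally I would remove isomorphic duplicates using the equivalence $\fan \sim \pm\varphi^*\fan'$ of Corollary~\ref{sec:cor-multidiv-isomorph}, which in case (\ref{item:pic1-two-elliptic}) amounts to permuting $P_1,P_2,P_3$, applying $(a_-,a_+) \mapsto (-a_+,-a_-)$, and translating by integers; the survivors are the $9$ surfaces of type (\ref{item:idx2-2fp}), the $1$ of type (\ref{item:idx2-1fp}), and, together with the $7$ toric ones, this exhausts the list.

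The main obstacle I expect is not conceptual but the size of the bookkeeping: the denominator bounds above (especially $q_2 \le 47$ in 1c)) leave a sizeable search space, and for each multidivisor one must correctly solve the linear system for $(a, u_K)$, check strict concavity of $h$ slice by slice, and then quotient out the isomorphism action. This is really a finite, computer-assisted verification, and the delicate point is ensuring completeness of the bounds -- that no Platonic configuration is overlooked and that the sign constraints guaranteeing the two elliptic fixed points are respected -- so that nothing drops out of the enumeration.
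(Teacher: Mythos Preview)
Your proposal is correct and follows essentially the same approach as the paper: the paper sets up exactly this finite enumeration scheme (the sub-cases 1a)--1e) for two elliptic fixed points and the Platonic/degree bounds for one elliptic fixed point), and then simply states that ``Doing this for $\ell=1,2,3$ we are able to state the following theorems,'' with the checks for ampleness and Gorenstein index carried out via Theorem~\ref{sec:thm-ample} and Remark~\ref{sec:rem-gorenstein-index}/Theorem~\ref{sec:thm-toric-sing} just as you describe. Your explicit specialisation of the bounds to $\ell=2$ (e.g.\ $q_1<8$, $q_2<48$ in 1c), $q_-\le 2$, $q_+\le 8$ in 1e)) matches the paper's general bounds, and the residual bookkeeping you flag is indeed the only remaining work.
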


\begin{spacing}{1.2}
  \begin{center}
    \begin{tabular}[htbp]{lcccc}
      \toprule
      & multidivisor & marks &deg. & sing.\\
      \cmidrule{2-5}
      (\ref{item:idx2-2fp})
      &$\{-2,0\} \otimes P_0\ + \   \{\frac{1}{3}\}\otimes P_1\ +\   \{\frac{1}{3}\}\otimes P_2$ &$\QQ^\pm$& $1$ & $K_3A_5A_1$\\
      &$\{-1,0\} \otimes P_0\ + \   \{\frac{1}{4}\}\otimes P_1\ +\   \{\frac{1}{4}\}\otimes P_2$ && $1$ & $K_2A_7$\\
      &$\{-1,0\} \otimes P_0\ + \   \{\frac{2}{5}\}\otimes P_1\ +\   \{\frac{2}{5}\}\otimes P_2$ && $1$ & $K_5A_4$\\
      &$\{-1,0\} \otimes P_0\ + \   \{\frac{1}{6}\}\otimes P_1\ +\   \{\frac{1}{2}\}\otimes P_2$ && $2$ & $K_1A_7$\\
      &$\{-2,-1\} \otimes P_0\ + \   \{\frac{5}{7}\}\otimes P_1\ +\   \{\frac{1}{3}\}\otimes P_2$& & $5$ & $K_5$\\
      &$\{-2,-1\} \otimes P_0\ + \   \{\frac{7}{9}\}\otimes P_1\ +\   \{\frac{1}{3}\}\otimes P_2$& & $2$ & $K_6A_2$\\
      &$\{\frac{-3}{2},-1\} \otimes P_0\ + \   \{\frac{5}{6}\}\otimes P\ +\   \{\frac{1}{2}\}\otimes P_2$& & $1$ & $K_1D_8$\\
      &$\{\frac{-3}{2},\frac{-1}{2}\} \otimes P_0\ + \   \{\frac{1}{2}\}\otimes P\ +\   \{\frac{1}{2}\}\otimes P_2$& & $1$ & $K_12D_4$\\
      &$\{\frac{-4}{3},0\} \otimes P_0\ + \   \{\frac{1}{2}\}\otimes P\ +\   \{\frac{1}{2}\}\otimes P_2$& & $1$ & $K_1D_5A_3$\\
      \cmidrule{2-5}
      (\ref{item:idx2-1fp})&$\{-\frac{3}{4}\}\otimes P_1\ +\   \{\frac{1}{2}\}\otimes P_2\ +\   \{\frac{1}{2}\}\otimes P_3$& $\QQ^+$ &$1$ &$K_1D_62A_1$\\
      \bottomrule
\end{tabular}
\end{center}
\end{spacing}

\begin{thm}
  There are the following log del Pezzo $\CC^*$-surfaces with Picard rank $1$ and index $\ell=3$.
  \begin{enumerate}
  \item  28 surfaces with two elliptic fixed points \label{item:idx3-2fp}
  \item  3  family over $\A^1$ of surfaces with two elliptic fixed points \label{item:idx3-family}
  \item  5  surface with one elliptic fixed point \label{item:idx3-1fp}
  \item  18 toric surfaces (c.f. \cite{Dais:arXiv0709.0999})
  \end{enumerate}
\end{thm}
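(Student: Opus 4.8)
The plan is to convert the classification into a finite enumeration governed by Sections~\ref{sec:invariant-divisors} and~\ref{sec:singularities}, and then to run that enumeration with $\ell=3$. First I would pin down the combinatorial shape of the data. By the proposition at the beginning of this section, a non-toric log del Pezzo $\CC^*$-surface has one or two elliptic fixed points; combining this with Theorem~\ref{sec:thm-picard-rank} for Picard rank $1$ and with Remark~\ref{sec:rem-quotient-sing} (the Brieskorn/quotient-singularity criterion), the associated multidivisor on $\PP^1$ falls into exactly the two types \ref{item:pic1-two-elliptic} and~\ref{item:pic1-one-parabolic} already isolated above: either two marks and a single coefficient of cardinality $2$ (hence three or four nontrivial coefficients, at most three of them non-integral and forming a Platonic triple), or one mark and exactly three non-integral coefficients whose denominators form a Platonic triple $(2,2,m)$, $(2,3,3)$, $(2,3,4)$, $(2,3,5)$. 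In the first type one further distinguishes according to which of the coefficients are non-integral, giving precisely the sub-cases 1a)--1e).

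Next I would make every sub-case finite. The weight $u_K$ of the canonical support function $h$, obtained from Theorem~\ref{sec:thm-canonical-divisor} and Proposition~\ref{sec:prop-cartier2weil}, must lie in $\frac1\ell\ZZ=\frac13\ZZ$, and writing the local affine equations for $h$ on the polyhedra with tail cone $\QQ^\pm$ yields divisibility constraints of the shape ``(numerator expression) divides $\ell\cdot$(denominator expression)'', exactly as carried out in 1a)--1e) and in the one-parabolic case. Specialising $\ell=3$ collapses the denominators $q_1,q_2,q_3,q_+,q_-$ to explicit finite ranges — the largest being $q_+\le 2\ell^2=18$ in case 1e), $q_2<8(\ell^2+\ell)=96$ in case 1c), and $q_3<\ell^2=9$ in the $(2,2,m)$ parabolic case — so that, up to the equivalence $\fan\sim\pm\varphi^*\fan'$ of Corollary~\ref{sec:cor-multidiv-isomorph}, only finitely many normalised multidivisors remain to be inspected.

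For each surviving multidivisor I would then perform four checks: (i) the Fano property, i.e. ampleness of $-K_X=D_{-h}$, by the ampleness criterion following Theorem~\ref{sec:thm-ample} (all $h_P$ strictly concave and $-h|_\sigma(0)$ of positive degree on both tail rays); (ii) the Gorenstein index, by solving $A\cdot\binom{a}{u_K}=(K_X)$ as in Remark~\ref{sec:rem-gorenstein-index} and retaining exactly those data with index $3$; (iii) the anti-canonical degree $(-K_X)^2$ via Proposition~\ref{prop:intersection-numbers} together with the singularity types — analytically toric at the parabolic points by Theorem~\ref{sec:thm-toric-sing} and read off Brieskorn's list in Remark~\ref{sec:rem-quotient-sing} at the elliptic points; and (iv) separating off the toric multidivisors (recognised by the criterion in the remark following Corollary~\ref{sec:cor-multidiv-isomorph}) and identifying those combinatorial types in which one coefficient $a_P$ survives as a free rational parameter, which produce the families over $\A^1$. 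Assembling the survivors gives the $28+3+5$ non-toric surfaces of the two tables plus the $18$ toric ones.

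The main obstacle is not a single conceptual step but the bulk and bookkeeping of the enumeration in (ii)--(iv): even with the explicit denominator bounds for $\ell=3$ there are many candidate triples and quadruples to test; the normalisation modulo $\fan\sim\pm\varphi^*\fan'$ must be applied with care to avoid double counting; and one must correctly decide which combinatorial types genuinely carry a one-parameter family instead of a sporadic surface. This is where the argument has to be organised systematically — and, in practice, machine-assisted — so that exactly the stated counts are produced.
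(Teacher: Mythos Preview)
Your proposal is correct and follows the paper's approach exactly: the bounds established in cases 1a)--1e) and in the one-parabolic case reduce the problem for each $\ell$ to a finite enumeration of multidivisors, which one then runs for $\ell=3$, checking ampleness via Theorem~\ref{sec:thm-ample} and the Gorenstein index via Remark~\ref{sec:rem-gorenstein-index} and Theorem~\ref{sec:thm-toric-sing}. One correction to your step~(iv): the families over $\A^1$ do not come from a free rational coefficient $a_P$ --- all coefficients are rigid --- but from the moduli of the support points on $\PP^1$; the families in the table are precisely the multidivisors supported on four points $P_0,\ldots,P_3$, with the cross-ratio furnishing the parameter.
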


\begin{spacing}{1.2}
\begin{center}
  \begin{longtable}{lcccc}
    \toprule
    & multidivisor & marks & $(K_X)^2$& sing \\
    \cmidrule{2-5}
    (\ref{item:idx3-2fp})  
    &$\{-4,0\} \otimes P_0\ + \   \{\frac{1}{2}\}\otimes P_1\ +\   \{\frac{1}{2}\}\otimes P_2$ &$\QQ^\pm$ & $\frac{4}{3}$& $A_3(11)2A_3$\\
    &$\{-4,2\} \otimes P_0\ + \   \{\frac{1}{2}\}\otimes P_1\ +\   \{\frac{1}{2}\}\otimes P_2$ && $\frac{2}{3}$&$2A_3(11)A_3$\\
    &$\{-3,0\} \otimes P_0\ + \   \{\frac{1}{4}\}\otimes P_1\ +\   \{\frac{1}{2}\}\otimes P_2$ && $1$&$A_3(12)A_5A_2$\\
    &$\{-2,0\} \otimes P_0\ + \   \{\frac{1}{4}\}\otimes P_1\ +\   \{\frac{1}{4}\}\otimes P_2$ && $\frac{2}{3}$&$A_3(22)A_7A_1$\\
    &$\{-1,0\} \otimes P_0\ + \   \{\frac{1}{5}\}\otimes P_1\ +\   \{\frac{1}{2}\}\otimes P_2$ && $\frac{7}{3}$&$A_1(1)A_6$\\
    &$\{-1,0\} \otimes P_0\ + \   \{\frac{1}{5}\}\otimes P_1\ +\   \{\frac{1}{5}\}\otimes P_2$ && $\frac{2}{3}$&$A_2(22)A_9$\\
    &$\{-1,0\} \otimes P_0\ + \   \{\frac{1}{6}\}\otimes P_1\ +\   \{\frac{1}{3}\}\otimes P_2$ && $1$&$A_2(12)A_8$\\
    &$\{-1,1\} \otimes P_0\ + \   \{\frac{3}{7}\}\otimes P_1\ +\   \{\frac{1}{2}\}\otimes P_2$ && $6$&$A_4(12)A_1$\\
    &$\{-1,0\} \otimes P_0\ + \   \{\frac{3}{7}\}\otimes P_1\ +\   \{\frac{3}{7}\}\otimes P_2$ && $\frac{2}{3}$&$A_5(22)A_6$\\
    &$\{-1,0\} \otimes P_0\ + \   \{\frac{1}{8}\}\otimes P_1\ +\   \{\frac{1}{2}\}\otimes P_2$ && $\frac{5}{3}$&$A_1(2)A_9$\\
    &$\{-1,1\} \otimes P_0\ + \   \{\frac{3}{8}\}\otimes P_1\ +\   \{\frac{1}{2}\}\otimes P_2$ && $\frac{10}{3}$&$A_5(11)2A_1$\\
    &$\{-2,-1\} \otimes P_0\ + \   \{\frac{5}{8}\}\otimes P_1\ +\   \{\frac{2}{5}\}\otimes P_2$ && $\frac{13}{3}$&$A_6(11)$\\
    &$\{-1,1\} \otimes P_0\ + \   \{\frac{3}{10}\}\otimes P_1\ +\   \{\frac{1}{2}\}\otimes P_2$ && $2$&$A_5(12)A_3A_1$\\
    &$\{-2,-1\} \otimes P_0\ + \   \{\frac{7}{10}\}\otimes P_1\ +\   \{\frac{2}{5}\}\otimes P_2$ && $1$&$A_6(12)A_4$\\
    &$\{-2,-1\} \otimes P_0\ + \   \{\frac{10}{13}\}\otimes P_1\ +\   \{\frac{1}{4}\}\otimes P_2$ && $\frac{17}{3}$&$A_6(22)$\\
    &$\{-2,-1\} \otimes P_0\ + \   \{\frac{11}{14}\}\otimes P_1\ +\   \{\frac{1}{4}\}\otimes P_2$ && $3$&$A_7(12)A_1$\\
    &$\{-2,-1\} \otimes P_0\ + \   \{\frac{13}{16}\}\otimes P_1\ +\   \{\frac{1}{4}\}\otimes P_2$ && $\frac{5}{3}$&$A_7(22)A_3$\\
    &$\{\frac{-3}{2},0\} \otimes P_0\ + \   \{\frac{1}{2}\}\otimes P_1\ +\   \{\frac{1}{2}\}\otimes P_2$ && $\frac{4}{3}$&$A_1(1)D_4A_3$\\
    &$\{\frac{-3}{2},0\} \otimes P_0\ + \   \{\frac{1}{4}\}\otimes P_1\ +\   \{\frac{1}{2}\}\otimes P_2$ && $\frac{2}{3}$&$D_4(1)A_1(1)A_5$\\
    &$\{\frac{-3}{2},-1\} \otimes P_0\ + \   \{\frac{4}{5}\}\otimes P_1\ +\   \{\frac{1}{2}\}\otimes P_2$ && $\frac{4}{3}$&$A_1(1)D_7$\\
    &$\{\frac{-3}{2},-1\} \otimes P_0\ + \   \{\frac{4}{7}\}\otimes P_1\ +\   \{\frac{1}{2}\}\otimes P_2$ && $\frac{17}{3}$&$\mathbf{D_5(2)}$\\
    &$\{\frac{-3}{2},-1\} \otimes P_0\ + \   \{\frac{5}{8}\}\otimes P_1\ +\   \{\frac{1}{2}\}\otimes P_2$ && $\frac{8}{3}$&$\mathbf{D_6(1)A_1}$\\
    &$\{\frac{-3}{2},-1\} \otimes P_0\ + \   \{\frac{7}{8}\}\otimes P_1\ +\   \{\frac{1}{2}\}\otimes P_2$ & & $\frac{2}{3}$&$A_1(2)D_{10}$\\
    &$\{\frac{-3}{2},-1\} \otimes P_0\ + \   \{\frac{7}{10}\}\otimes P_1\ +\   \{\frac{1}{2}\}\otimes P_2$ & & $\frac{4}{3}$&$D_6(2)A_3$\\
    &$\{\frac{-4}{3},-1\} \otimes P_0\ + \   \{\frac{4}{5}\}\otimes P_1\ +\   \{\frac{1}{2}\}\otimes P_2$ & & $\frac{1}{3}$&$A_1(1)E_8$\\
    &$\{\frac{-4}{3},\frac{-2}{3}\} \otimes P_0\ + \   \{\frac{1}{2}\}\otimes P_1\ +\   \{\frac{1}{2}\}\otimes P_2$ & & $\frac{2}{3}$&$A_1(2)2D_5$\\
    &$\{\frac{-5}{4},\frac{-1}{2}\} \otimes P_0\ + \   \{\frac{1}{2}\}\otimes P_1\ +\   \{\frac{1}{2}\}\otimes P_2$ & & $\frac{2}{3}$&$A_1(2)D_4D_6$\\
    &$\{\frac{-6}{5},0\} \otimes P_0\ + \   \{\frac{1}{2}\}\otimes P_1\ +\   \{\frac{1}{2}\}\otimes P$ & &  $\frac{2}{3}$&$A_1(2)D_7D_3$\\
   \cmidrule{2-5}
    (\ref{item:idx3-family}) 
    &$\{-3,-1\} \otimes P_0+  \{\frac{1}{2}\}\otimes P_1 +  \{\frac{1}{2}\}\otimes P_2 +  \{\frac{1}{2}\}\otimes P_3$ &$\QQ^\pm$ &  $\frac{2}{3}$&$D_4(1)D_4A_1$\\
    &$\{-3,0\} \otimes P_0 + \{\frac{1}{2}\}\otimes P_1 +  \{\frac{1}{2}\}\otimes P_2 +  \{\frac{1}{2}\}\otimes P_3$ && $\frac{1}{3}$&$2D_4(1)A_2$\\
    &$\{-2,-1\} \otimes P_0+  \{\frac{1}{4}\}\otimes P_1 +  \{\frac{1}{2}\}\otimes P_2 +  \{\frac{1}{2}\}\otimes P_3$ && $\frac{1}{3}$&$D_4(2)D_6$\\
   \cmidrule{2-5}
   (\ref{item:idx3-1fp})  
   &${\{\frac{1}{2}\}\otimes P_1\ +\   \{\frac{1}{2}\}\otimes P_2\ +\  \{\frac{1}{2}\}\otimes P_3}$ &$\QQ^+$& $\frac{8}{3}$&$\mathbf{D_4(1)3A_1}$\\
   &$\{-\frac{2}{3}\}\otimes P_1\ +\   \{\frac{1}{2}\}\otimes P_2\ +\   \{\frac{1}{2}\}\otimes P_3$ && $\frac{4}{3}$&$A_1(1)D_52A_1$\\
   &$\{-\frac{2}{3}\}\otimes P_1\ +\   \{\frac{1}{3}\}\otimes P_2\ +\   \{\frac{1}{2}\}\otimes P_3$ && $\frac{2}{3}$&$2A_1(1)E_6A_1$\\
   &$\{-\frac{1}{4}\}\otimes P_1\ +\   \{\frac{1}{2}\}\otimes P_2\ +\   \{\frac{1}{2}\}\otimes P_3$ && $\frac{4}{3}$&$D_4(2)A_32A_1$\\
   &$\{-\frac{5}{6}\}\otimes P_1\ +\   \{\frac{1}{2}\}\otimes P_2\ +\   \{\frac{1}{2}\}\otimes P_3$ && $\frac{2}{3}$&$A_1(2)D_82A_1$\\
   \bottomrule
  \end{longtable}
\end{center}
\end{spacing}

\begin{rem}
  In order to compare our results with that of Hisanori Ohashi and Shingo Taki in \cite{ldp3k3} we used in the list their notation for the quotient singularities of Gorenstein index $3$:
 \[
 \begin{array}{ll}
    A_1(1) = \langle 3,1\rangle
   &A_1(2) = \langle 6,1\rangle\\
   A_m(11) = \langle 9m - 15, 6m-11\rangle&
   A_m(12) = \langle 9m - 9, 6m-7\rangle\\
   &A_m(22) = \langle 9m - 3, 3m-2\rangle\\
   \\
   D_m(1) = \langle 2; 2,1;2,1;3m-10,3m-13\rangle&
   D_m(2) = \langle 2; 2,1;2,1;3m-8,3m-11\rangle
 \end{array}
 \]
 In their paper Hisanori Ohashi and Shingo Taki classified log del Pezzo surfaces index $3$ fulfilling the following \emph{smooth divisor property}:
$|-3K_X|$ has to contain a divisor $2C$, such that $C$ is a smooth curve which does not meet the singularities of $X$. 

Comparing the results one finds that all singularity types for Picard rank $1$ in their list are either realized by toric surfaces or by $k^*$-surfaces from our list
Moreover, also for Picard rank two we may realize some of the non-toric configurations mentioned there by $k^*$ surfaces:
\begin{spacing}{1.2}
\begin{longtable}{ccc}
  \toprule
   multidivisor & marks & sing \\
   $\{\frac{-1}{2},0\} \otimes P_0\ + \   \{-\frac{1}{2},0\}\otimes P_1\ +\   \{\frac{1}{4}\}\otimes P_2$ &$\QQ^\pm$& $D_4(2)$\\
$\{-\frac{1}{2},0\} \otimes P_0\ + \   \{-\frac{1}{2},0\}\otimes P_1\ +\   \{\frac{2}{5}\}\otimes P_2$ && $D_5(1)A_1$\\
\midrule
$\{\frac{1}{2},0\} \otimes P_0\ + \   \{-\frac{1}{2}\}\otimes P_1\ +\   \{\frac{1}{2}\}\otimes P_2$ &$\QQ^+$& $D_4(1)2A_1$\\
\end{longtable}
\end{spacing}
\end{rem}
From our results and that of \cite{MR1009466} we know that there exist families of log Del Pezzo surfaces of Picard rank $1$ and index $\ell=1,3$ but not of index $2$. So one may ask oneself for which indices there are such families.
A partial answer is given by the following 
\begin{prop}
  For every odd $\ell \in \NN$ there exists a family of log del Pezzo surfaces of Picard rank $1$ and Gorenstein index $\ell$. 
\end{prop}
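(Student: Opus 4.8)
The plan is to exhibit, for each odd $\ell$, an explicit one-parameter family of multidivisors on $\PP^1$ of the shape in case~(\ref{item:pic1-two-elliptic}) (two elliptic and one hyperbolic fixed point) and to verify the four requirements: Picard rank one, ampleness of $-K$, Gorenstein index exactly $\ell$, and non-triviality of the family. Since $\ell$ is odd, $a_-:=-\tfrac{\ell+3}{2}$ is an integer, and I would take
$$
\fan_t \;=\; \{a_-,-1\}\otimes P_0 \;+\; \{\tfrac{1}{2}\}\otimes P_1 \;+\; \{\tfrac{1}{2}\}\otimes P_2 \;+\; \{\tfrac{1}{2}\}\otimes P_3,
$$
with marks on both $\QQ^+$ and $\QQ^-$, where $P_0=0$, $P_1=1$, $P_2=\infty$ are fixed and $P_3=t$ varies over $\PP^1\setminus\{0,1,\infty\}$. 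The mark conditions hold, since $\sum_P a_P^+=\tfrac{1}{2}>0$ and $\sum_P a_P^-=a_-+\tfrac{3}{2}=-\tfrac{\ell}{2}<0$; moreover $\fan_t$ has four coefficients which are not single integers, so $X(\fan_t)$ is non-toric.

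First I would compute the Picard rank: by Theorem~\ref{sec:thm-picard-rank}, both tail cones being marked there are no extremal rays, $\dim N=1$, and $\fan_{P_0}$ is the only slice with two vertices, so $\rho_{X(\fan_t)}=1+0+(2-1)-1=1$ (that $X(\fan_t)$ is $\QQ$-factorial follows since all its singularities are (cyclic) quotient singularities, cf.\ Remark~\ref{sec:rem-quotient-sing} and Theorem~\ref{sec:thm-toric-sing}). Next, for ampleness of $-K$ I would write down the $\QQ$-support function $h$ of $K_{X(\fan_t)}$ given by Theorem~\ref{sec:thm-canonical-divisor} and Proposition~\ref{sec:prop-cartier2weil}: at a vertex $v$ of $\fan_P$ one has $h_P(v)=\tfrac{1}{\mu(v)}-1-K_Y(P)$, and the common linear part of the $h_P$ is pinned down by the requirement that the restrictions $h|_{\D_\pm}$ to the two polyhedral divisors with complete locus be principal, i.e.\ of degree zero. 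Solving these two conditions one finds that the slope of $h$ on $\QQ^+$ equals $1$ and the slope on $\QQ^-$ equals $-\tfrac{1}{\ell}$; it is exactly this that forces the choice $a_-=-\tfrac{\ell+3}{2}$. Each $(-h)_P$ is then strictly concave on its at most three affine pieces, and since both tail cones are marked the corollary to Theorem~\ref{sec:thm-ample} gives that $D_{-h}$ is ample. Completeness of $\fan_t$, together with the log-terminal criterion applied on the charts $X(\D_\pm)$ (the relevant sums being $\sum_P(1-\tfrac{1}{\mu(\rho_P)})=\tfrac{3}{2}<2$) and the observation that the remaining affine chart $X(\D_0)$ has only toric (hence log-terminal quotient) singularities by Theorem~\ref{sec:thm-toric-sing}, then shows that $X(\fan_t)$ is a log del Pezzo surface.

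The Gorenstein index is the step I expect to be the main point. From the support function $h$ above, the slope $-\tfrac{1}{\ell}$ on $\QQ^-$ means the weight of $h|_{\D_-}$ is $\tfrac{1}{\ell}\notin M=\ZZ$ while all other data of $h$ are integral; equivalently, running the linear system of Remark~\ref{sec:rem-gorenstein-index} for $(a,u_K)$ produces $u_K$ of denominator exactly $\ell$, and $\ell\sum_i a_iP_i$ is the first principal multiple of $\sum_i a_iP_i$. Hence $\ell K_{X(\fan_t)}$ is Cartier but no smaller multiple is, so the Gorenstein index equals $\ell$. This is precisely where oddness of $\ell$ enters: only then is $a_-=-\tfrac{\ell+3}{2}$ an integer, so that the above configuration really is a member of case~(\ref{item:pic1-two-elliptic}) with the prescribed (quotient) singularities.

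Finally, for non-triviality I would use Corollary~\ref{sec:cor-multidiv-isomorph}: an isomorphism $X(\fan_t)\cong X(\fan_{t'})$ forces $\fan_t\sim\pm\varphi^*\fan_{t'}$ for some $\varphi\in\mathrm{Aut}(\PP^1)$. Since the doubleton coefficient at $P_0$ can never be matched to a singleton one and the three coefficients $\{\tfrac{1}{2}\}$ are equal, such a $\varphi$ must fix $0$ and carry $\{1,\infty,t\}$ onto $\{1,\infty,t'\}$ (the $-$ case forces, in addition, a global shift whose degree is $(1-\ell)/2\neq 0$ for $\ell\neq 1$, hence contributes nothing new), and this already gives $t'\in\{t,1/t\}$. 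Thus each fibre is isomorphic to only finitely many others, so $t\mapsto X(\fan_t)$ is a genuine non-isotrivial family of log del Pezzo surfaces of Picard rank $1$ and Gorenstein index $\ell$, parametrised by an open subset of $\A^1$, which proves the proposition.
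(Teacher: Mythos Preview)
Your proposal is correct and uses essentially the same example as the paper: writing $\ell=2i+1$, your $a_-=-\tfrac{\ell+3}{2}=-2-i$ recovers exactly the paper's first multidivisor $\fan_i=\{-2-i,-1\}\otimes P_1+\{\tfrac12\}\otimes P_2+\{\tfrac12\}\otimes P_3+\{\tfrac12\}\otimes P_4$ with both marks. The paper's own proof simply exhibits this multidivisor (together with a second one $\fan_i'$) and asserts without further argument that it yields a family over $\PP^1\setminus\{0,1,\infty\}$; you go further and actually verify Picard rank, ampleness of $-K$, the Gorenstein index computation via the slope $-\tfrac{1}{\ell}$ on $\QQ^-$, and non-isotriviality, so your write-up is strictly more detailed than the paper's.
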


\begin{proof}
  For $\ell = 2i +1$. The multidivisors with marks $\{\QQ^-,\QQ^+\}$
  \begin{enumerate}
  \item 
    $\fan_{i}:=\{-2-i,-1\} \otimes P_1 + \frac{1}{2} \otimes P_2 + \frac{1}{2} \otimes P_3 + \frac{1}{2} \otimes P_4$
  \item $\fan_i':=\{-2-i,i-1\} \otimes P_1 + \frac{1}{2} \otimes P_2 + \frac{1}{2} \otimes P_3 + \frac{1}{2} \otimes P_4$
  \end{enumerate}
   give rise to families over $\PP^1 \setminus \{0,1, \infty\}$.
\end{proof}

\begin{rem}
  We may also describe the total space of these families by the corresponding divisorial fans.
  $$\fan_i = \{\D_-, \D, \D_+\}$$
  with 
  \begin{eqnarray*}
    \D_-&=& (-\infty,-2-i] \otimes D_0+ 
    (-\infty,\frac{1}{2}] \otimes D_1+
    (-\infty,\frac{1}{2}] \otimes D_\infty+
    (-\infty,\frac{1}{2}] \otimes D_\Delta\\
    \D_+&=& [-1 ,\infty) \otimes D_0+ 
    [\frac{1}{2},\infty) \otimes D_1+
    [\frac{1}{2},\infty) \otimes D_\infty+
    [\frac{1}{2},\infty)  \otimes D_\Delta\\
    \D&=& [-2-i,-1] \otimes D_0+ 
    \emptyset \otimes D_1+
    \emptyset \otimes D_\infty+
    \emptyset \otimes D_\Delta\\
  \end{eqnarray*}
  with $D_1=\{1\}\times \PP^1, D_0=\{0\}\times \PP^1, D_\infty=\{\infty\}\times \PP^1$ and the diagonal $D_\Delta = \Delta \subset \PP^1 \times \PP^1$.
\end{rem}

\section{Examples in dimension 3}
\label{sec:examples-dimension-3}
\begin{exmp}[Two smoothings]
  We consider $X=\text{Cone}(\text{dP}_6)$ the projective cone over the del Pezzo surface of degree 6. In \cite{jahnke-2006} it serves as an example of a Fano threefold with canonical singularities admitting two different smoothings. It turns out, that both smoothings are equivariant. Hence, they can be described by their divisorial fans.
  \begin{figure}[htbp]
     \subfigure[$\fan_0$]{
       \includegraphics[trim=0 0 0 0,clip=true, height=4cm]{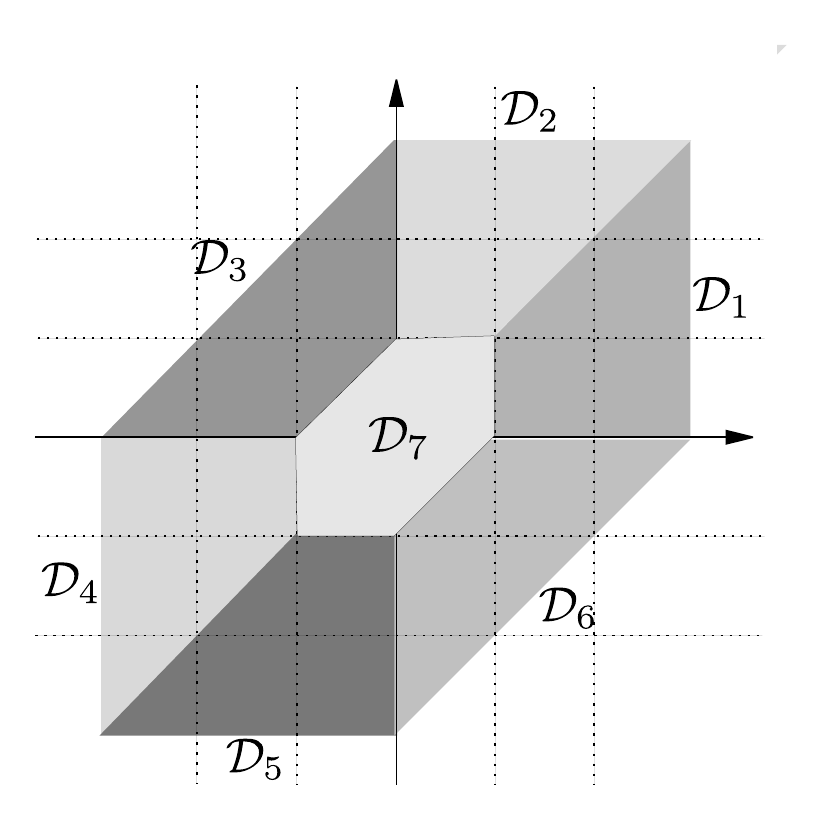}%
    }
    \subfigure[$\fan_1$]{
       \includegraphics[trim=0 0 0 0,clip=true, height=4cm]{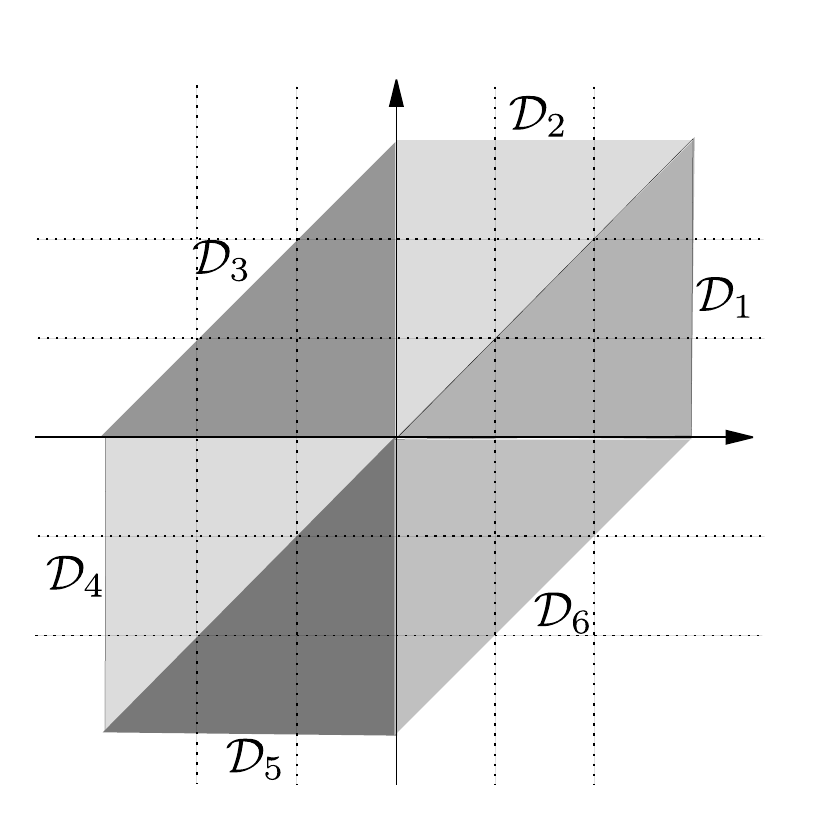}%
    }
    \caption{Projective cone over the del Pezzo surface of degree 6}
    \label{fig:dp6}
  \end{figure}

The projective cone over $\text{dP}_6$ is toric. By restricting the torus action we get a divisorial fan over $Y=\PP^1$ with two nontrivial slices---see figure~\ref{fig:dp6}. Note that $\D_7$ does not occur in the slice $\fan_1$, this means it has coefficient $\emptyset$ at $\{1\}$.

Now we consider divisorial fans over $Y=\PP^1 \times \mathbf{A}^1$
consisting of seven divisors. Their non-trivial coefficients at the prime divisors are shown in figure~\ref{fig:smoothing-tp2} and \ref{fig:smoothing-ppp}, respectively. Here we use the coordinates $x=\frac{u}{v}$ and $y$ for $(u:v,y) \in \PP^1 \times \mathbf{A}^1$.

  \begin{figure}[htbp]
    \subfigure[$\fan_{\{y=0\}}$]{
      \includegraphics[trim=5 0 5 17,clip=true, height=2.9cm]{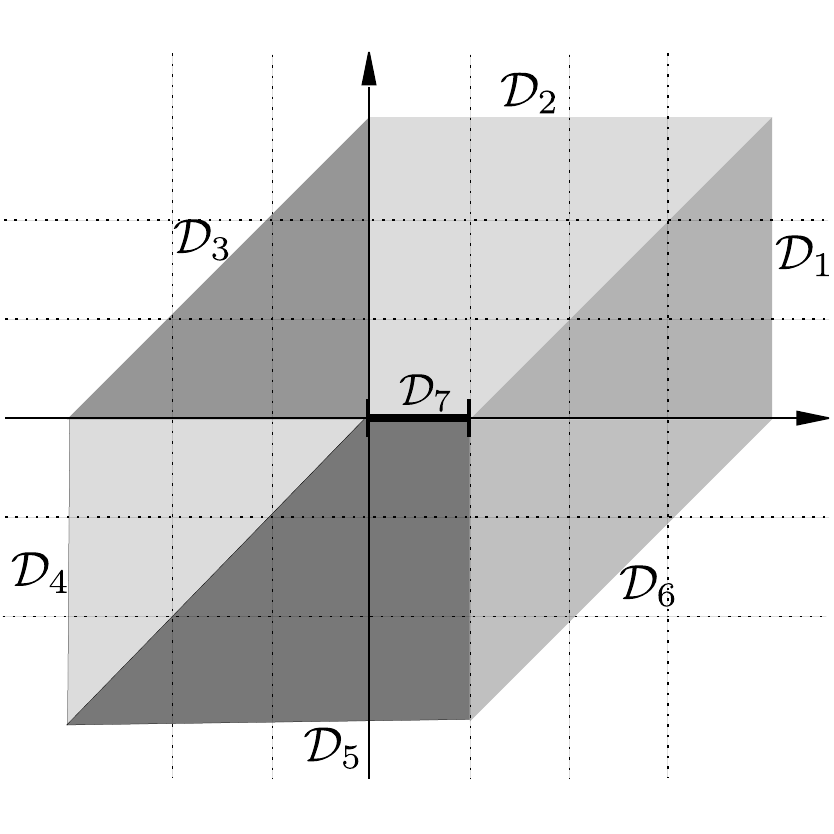}%
    }
    \subfigure[$\fan_{\{y=x\}}$]{
      \includegraphics[trim=5 18 5 0,clip=true,height=2.8cm]{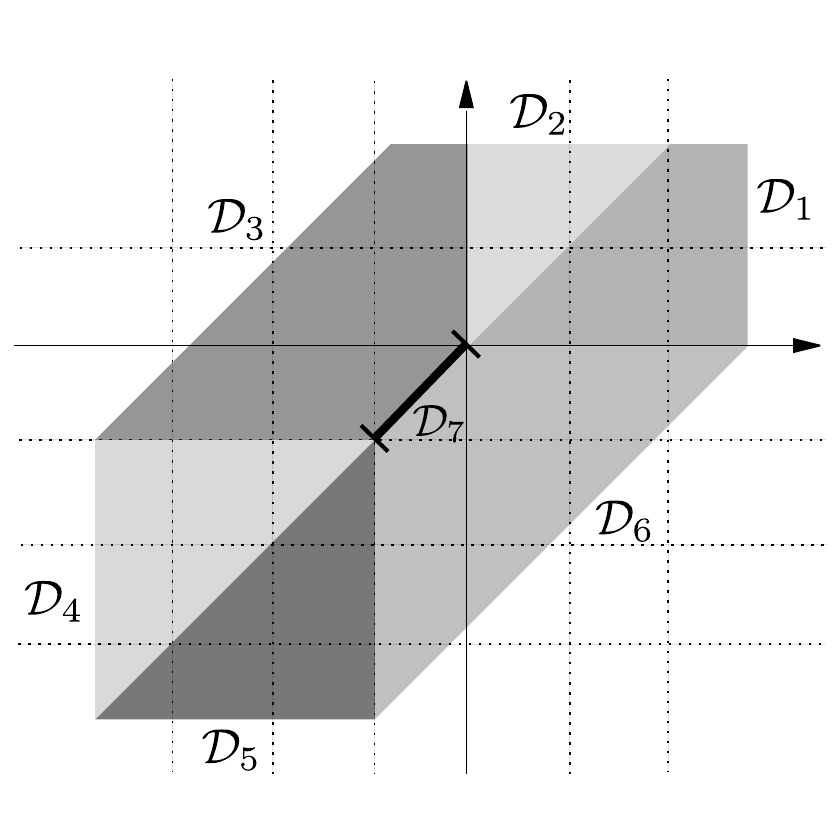}%
    }
    \subfigure[$\fan_{\{y=2x\}}$]{
      \includegraphics[trim=5 0 5 23,clip=true,height=2.9cm]{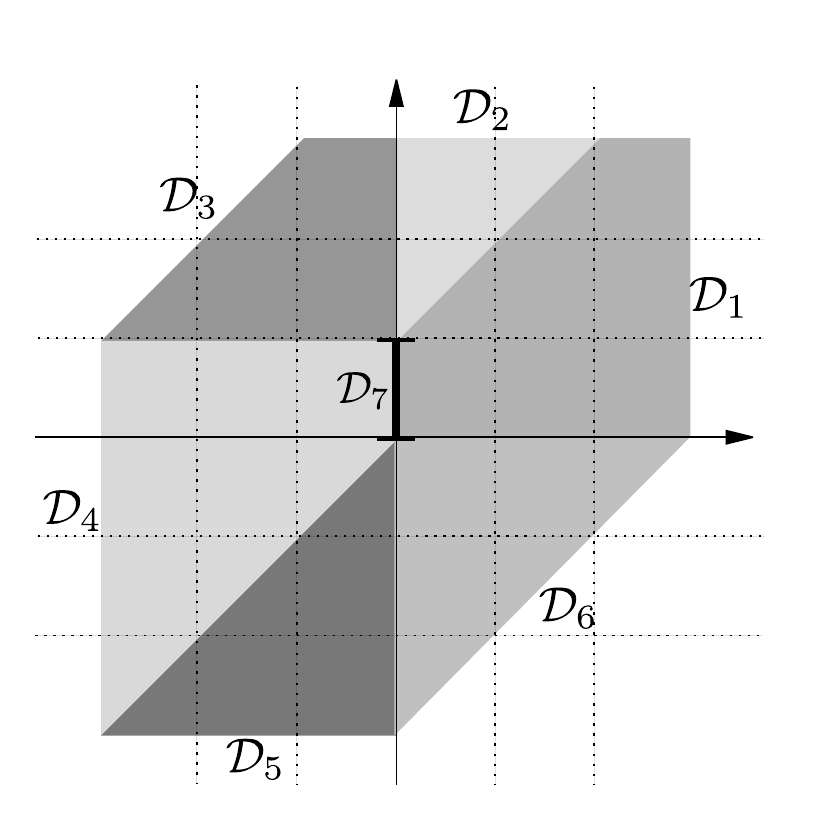}%
    }
     \subfigure[$\fan_{\{y=1\}}$]{
       \includegraphics[trim=15 0 10 23,clip=true, height=2.9cm]{cone0.pdf}%
    }
   \caption{Smoothing to $\PP(T_{\PP^2})$}
\label{fig:smoothing-tp2}
\end{figure}

Note that $k[y] \subset \Gamma(\pdv(\D_i),\CO_{\pdv(\D_i)})_0$ for $i=1,\ldots 7$. So we get dominant and torus invariant morphisms $\pdv(\D_i)\rightarrow \mathbf{A}^1$. They glue to a morphism $\pdv(\fan) \rightarrow \mathbf{A}^1$.

\begin{figure}[htbp]
    \subfigure[$\fan_{\{y=0\}}$]{
      \includegraphics[trim=0 0 0 0,clip=true, height=4cm]{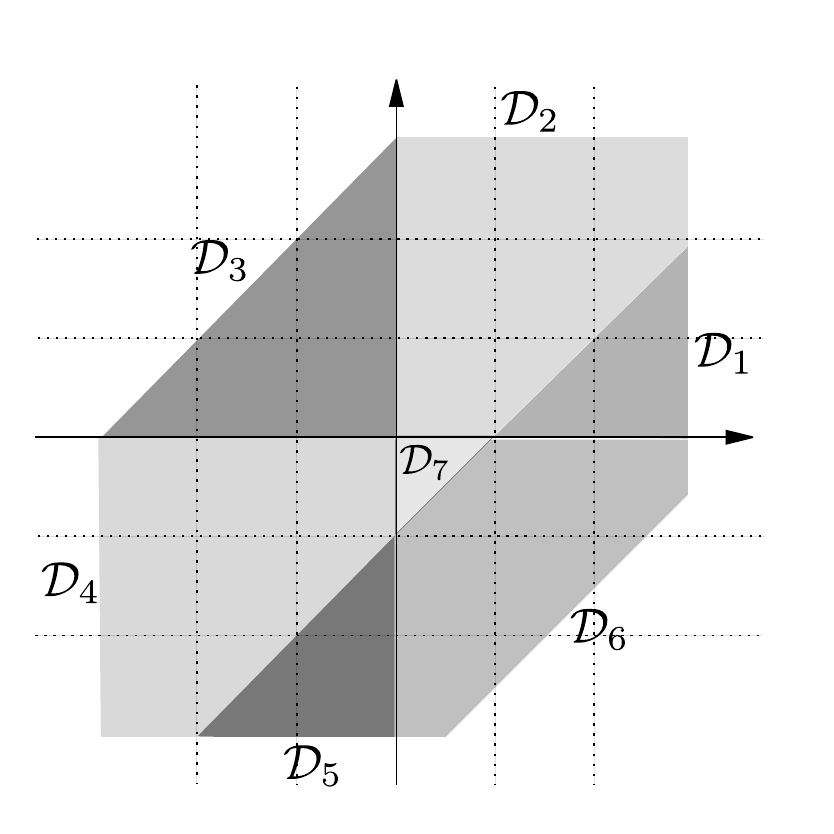}%
    }
    \subfigure[$\fan_{\{y=x\}}$]{
      \includegraphics[trim=0 0 0 0,clip=true,height=4cm]{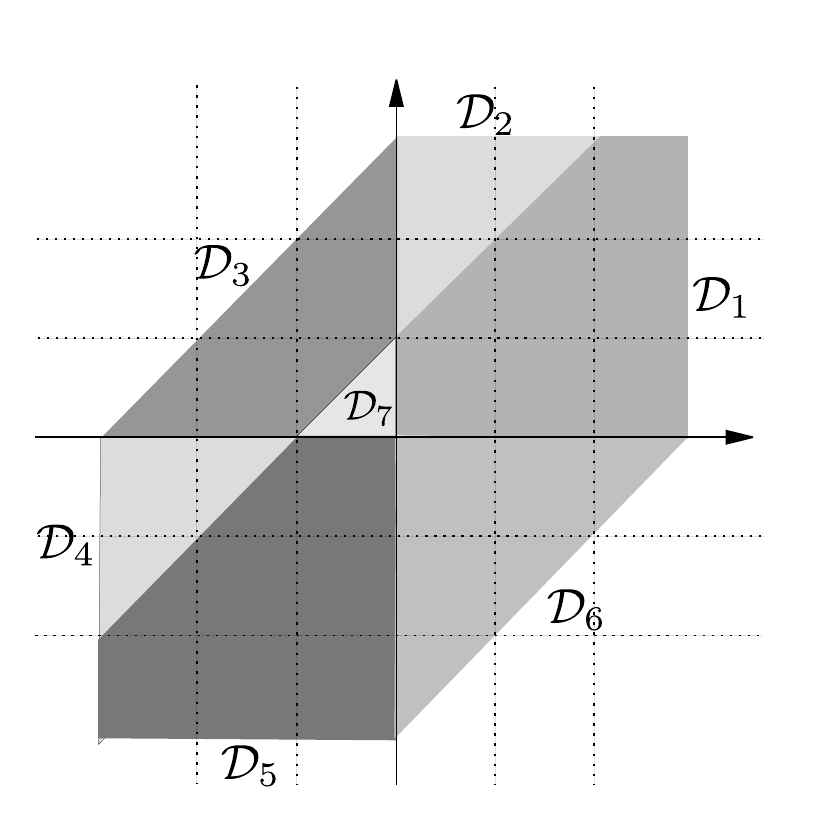}%
    }
     \subfigure[$\fan_{\{y=1\}}$]{
       \includegraphics[trim=0 0 0 0,clip=true, height=4cm]{cone0.pdf}%
     }
     \caption{Smoothing to $\PP^1 \times \PP^1 \times \PP^1$}
\label{fig:smoothing-ppp}
  \end{figure}
It is easy to check that the generic fibre $f_y$ of these 
families is given by divisorial fans over $\PP^1$ with the same slices but at $0,1,y$ . The fibre  $f_0$ is $\text{Cone}(\text{dP}_6)$.

Note that the equivariant smoothing leads to a Minkowski decomposition of the polyhedron corresponding to the singular fixed point. This effect already occurred in the context of deformations of toric singularities in \cite{MR1452429}. 
But note, that here we also give a description of the fibres in terms of polyhedral divisors.

As an object of future studies one may ask when such equivariant smoothings exist and if they are unique.
As one expects from the examples this question is closely related to the existence and uniqueness of {\em smooth} Minkowski decompositions.
\end{exmp}

As a last application we state the following 
\begin{prop}
  $\PP^3$ and $Q^3$ are the only smooth threefolds of Picard rank $1$ admitting a $T^2$-action.
\end{prop}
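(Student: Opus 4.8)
The plan is to reduce the statement to a finite combinatorial search using the tools developed in Sections~\ref{sec:invariant-divisors} and~\ref{sec:singularities}. By Corollary~\ref{sec:cor-toric} a smooth threefold of Picard rank $1$ with a $T^2$-action is either toric — in which case it is $\PP^3$ or $Q^3$ only if one checks the toric Picard-rank-$1$ classification — or it is given by a divisorial fan $\fan$ on $Y=\PP^1$ with slices in $N\cong\ZZ^2$ that is genuinely non-toric, i.e. has at least three slices $\fan_P$ which are not lattice translates of $\tail\fan$. First I would invoke Theorem~\ref{sec:thm-picard-rank}: with $n=\dim N=2$, the condition $\rho_X=1$ forces $\#\xrays(\fan)+\sum_P(\#\fan_P^{(0)}-1)=2$. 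So there are very few shapes for $\fan$: either two extremal rays and all slices are single points (giving a surface-type degeneracy, not a threefold, so this case is vacuous here), or one extremal ray together with exactly one slice having two vertices, or no extremal rays and $\sum_P(\#\fan_P^{(0)}-1)=2$, meaning either one slice with three vertices or two slices with two vertices each. In every case the tail fan $\tail\fan$ is a complete fan in $\QQ^2$ with at most two rays that are non-extremal, which already constrains it severely.

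Next I would impose smoothness. By Proposition~\ref{sec:prop-D-smooth} and Theorem~\ref{sec:thm-toric-sing}, smoothness of $X=\pdv(\fan)$ is equivalent to regularity of the cones $\delta_P=\overline{\QQ^+\cdot(\{1\}\times\Delta_P)}\subset\QQ\times N$ over each non-marked slice, together with the two-point condition of Proposition~\ref{sec:prop-D-smooth} at marked slices. Regularity of these three-dimensional cones is a strong restriction: it forces each non-integral slice to consist of vertices whose denominators $\mu(v)$ are compatible with a unimodular cone, which in dimension $2+1$ essentially pins down the local picture up to the action of $GL(2,\ZZ)$ and lattice translation. Combined with the Picard-rank-$1$ shape constraint from the previous step, this leaves only finitely many candidate marked subdivisions up to the equivalence of Theorem~\ref{sec:thm-fan-iso-codim1}. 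One then writes down each candidate explicitly, computes $K_X$ via Theorem~\ref{sec:thm-canonical-divisor}, checks the Fano (ampleness) condition via Theorem~\ref{sec:thm-ample} and its corollary, and computes $(-K_X)^3$ via Proposition~\ref{prop:intersection-numbers}; comparison with the classification of smooth Fano threefolds of Picard number~$1$ (\cite{0382.14013}) then identifies the survivors. The example computations in the text (the quadric $Q^3$ appearing with $(-K)^3=28$ and Fano index $3$, $\PP^3$ appearing toric with Fano index $4$) show that these two do arise, and the point is that the bookkeeping rules out everything else.

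The main obstacle I anticipate is the smooth-gluing issue highlighted in the discussion after Lemma~\ref{sec:lem-refine} and in Example~\ref{sec:exmp-quadric}: knowing the individual affine charts $\pdv(\D_i)$ are smooth (even all $\cong\A^3$) does not by itself determine $\pdv(\fan)$, since the charts may be glued non-torically, and one must verify that a given marked subdivision really does assemble into a \emph{smooth complete} threefold and not merely a collection of smooth charts. Concretely, one has to check that the combinatorics forced by $\rho_X=1$ plus regularity is rigid enough that the only admissible gluings are the ones producing $\PP^3$ and $Q^3$ — ruling out, for instance, a blow-up-type configuration that still has Picard rank $1$ on each chart but fails completeness or smoothness globally. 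I expect this to be handled by the face-relation criterion $\D'\prec\D\iff\deg\D\cap\tail\D'=\deg\D'$ together with $\deg\fan\subsetneq\tail\fan$ (properness), which together with Theorem~\ref{sec:thm-fan-iso-codim1} should leave no freedom once the slices are fixed, but verifying this rigidity carefully is where the real work lies.
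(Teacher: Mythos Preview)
Your overall framework—reduce via the Picard-rank formula and then impose smoothness on the cones $\delta_P$—matches the paper, but the endgame diverges and your case analysis contains a genuine error.

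First, dismissing the case ``two extremal rays and every slice a single vertex'' as a ``surface-type degeneracy'' is wrong: with $\dim N=2$ and $\dim Y=1$ this is still a threefold. The paper rules this configuration out differently. Non-toric forces at least three non-trivial slices; if each were a single (hence non-integral) vertex, then any $\D\in\fan$ with complete locus would carry at least three non-integral coefficients, contradicting Proposition~\ref{sec:prop-D-smooth}. This is what forces some slice to have two vertices, and iterating the argument produces a second such slice; the rank formula then forbids a third, and also forbids extremal rays. One concludes: exactly two slices $\fan_{P_1},\fan_{P_2}$ each containing a segment $\overline{v_iw_i}$ with \emph{integral} endpoints, and exactly one further slice $\fan_Q$ which is a translate of $\tail\fan$ by a non-integral point $\tfrac{1}{q}u$.

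Second, your plan to finish by computing $(-K_X)^3$ and matching against the Fano classification presupposes that the variety is Fano, which is not part of the hypothesis. You could try to close this by arguing separately that a smooth rational projective threefold of Picard rank $1$ has ample anticanonical class, but you do not, and in any case the detour is unnecessary. The paper's decisive step is purely combinatorial: the four maximal $\D_i$ have complete locus, so by Proposition~\ref{sec:prop-D-smooth} the associated cones $\delta_i\subset\QQ\times N$ are regular, i.e.\ each has $|\det\delta_i|=1$. Summing over the four cones and expanding the determinants yields
\[
4=\sum_{i=1}^4|\det\delta_i|=2q\,\bigl|\det(v_1-w_1,\;v_2-w_2)\bigr|=2q\cdot\vol\bigl(\overline{v_1w_1}+\overline{v_2w_2}\bigr).
\]
Since $q\geq 2$, this forces $q=2$ and the Minkowski-sum parallelogram to have lattice volume $1$. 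Up to a lattice automorphism this pins down the divisorial fan uniquely as that of Example~\ref{sec:exmp-quadric}, hence $X\cong Q^3$. No external classification is invoked, and the ``rigidity of gluing'' you worry about is settled by this volume identity rather than by the face-relation criterion.
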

\begin{proof}
  There exists exactly one smooth toric threefold of Picard rank one, namely $\PP^3$.

  To find  divisorial fans describing non-toric threefolds first note, that $Y$ has to be $\PP^1$, i.e. $X$ is rational. Indeed, there are at least three rays in the tail fan. Because of the formula for the picard rank
  $$\rho_X= 1 + \# \xrays(\fan) + \sum_{P \in Y} (\# \fan_P^{(0)} - 1) - \dim N.$$
 from theorem~\ref{sec:thm-picard-rank} we know that there must be at least one non-extremal ray, so there is a divisor with complete locus $\D \in \fan$. Since $\pdv(\D)$ has to be non-singular, by theorem~\ref{sec:prop-D-smooth} we get $Y=\PP^1$.
  
Because of the statement on isomorphy in theorem~\ref{sec:thm-fan-iso-codim1} we may assume that no slice is a translation of the tail fan by a lattice element.

 $\pdv(\fan)$ being non-toric implies that there are at least three slices which are non-trivial.
  Consider $\D \in \fan$ with complete locus and tail cone $\sigma \subset \tail \fan$. We must have a slice with at least two vertices. ideed, if we had only translated tail fans $\D$ would have at least three non-integral coefficients this contradicts the smoothness of $\pdv(\D)$ by theorem~\ref{sec:prop-D-smooth}.
By the rank formula we have at most one extremal ray and therefore at most one divisor with non-compact tail and affine locus.
 Let $\fan_{P_1}$ have at least two vertices then there are two non-compact polyhedra which have two vertices. One of them belongs to a divisor with complete support. Hence, there is at most one further non-trivial coefficient of $\D$, i.e. at most on tail fan translated by a non-integral value, so there must be another slice $\fan_{P_2}$ with at least two vertices.

Now the Picard rank formula implies, that we have exactly two slices with exactly two vertices and no extremal rays, i.e. all non-compact polyhedra in the slices belong to divisors with complete locus, so proposition~\ref{sec:prop-D-smooth} implies that the vertices of $\fan_{P_1},\fan_{P_2}$ are integral and there is exactly one slice $\fan_Q$ which is a non-integral translation of tail fan.

Let $\overline{v_1w_1}$ and $\overline{v_2w_2}$ be the line segments in $\fan_{P_1}$ and $\fan_{P_2}$, respectively and $\frac{1}{q}u$ the non-integral vertices in $\fan_Q$ (here $q\in \NN$ minimal such that $u \in N$). Since there are no extremal rays. On all rays of $\tail \fan$ there must lie a vertex of $\Box=\overline{v_1w_1} + \overline{v_2w_2} + r$. So $\fan$ has a tail fan with four rays which are spanned by the vertices of $\Box$. We have four polyhedral divisors with complete locus and tail cones of maximal dimension. 
\begin{eqnarray*}
  \D_1=(\overline{v_1w_1}+\sigma_1)\otimes P_1 + (v_2+\sigma_1)\otimes P_2 + (\frac{1}{q}u + \sigma_1)\otimes Q\\
  \D_2=(\overline{v_2w_2}+\sigma_2)\otimes P_1 + (v_1+\sigma_2)\otimes P_2 + (\frac{1}{q}u + \sigma_2) \otimes Q\\
  \D_3=(\overline{v_1w_1}+\sigma_3)\otimes P_1 + (w_2+\sigma_3)\otimes P_2 + (\frac{1}{q}u + \sigma_3) \otimes Q\\
  \D_4=(\overline{v_2w_2}+\sigma_4)\otimes P_1 + (w_1+\sigma_4)\otimes P_2 + (\frac{1}{q}u + \sigma_4) \otimes Q
\end{eqnarray*}

By remark~\ref{sec:rem-toric} and corollary~\ref{sec:cor-toric} we see that $\pdv(\D_i)$ is essentially toric 
with $\pdv(\D_i) \cong X_{\delta_i}$ with 
\begin{eqnarray*}
  \delta_1=\pos(
  \left(\begin{smallmatrix}
    v_1+v_2 \\ 1
  \end{smallmatrix}\right),
  \left(\begin{smallmatrix}
    w_1+v_2 \\ 1
  \end{smallmatrix}\right)
  \left(\begin{smallmatrix}
    u \\ -q
  \end{smallmatrix}\right)
)\\
\delta_2=\pos(
  \left(\begin{smallmatrix}
    v_2+v_1 \\ 1
  \end{smallmatrix}\right),
  \left(\begin{smallmatrix}
    w_2+v_1 \\ 1
  \end{smallmatrix}\right)
  \left(\begin{smallmatrix}
    u \\ -q
  \end{smallmatrix}\right)
)\\
\delta_3=\pos(
  \left(\begin{smallmatrix}
    v_1+w_2 \\ 1
  \end{smallmatrix}\right),
  \left(\begin{smallmatrix}
    w_1+w_2 \\ 1
  \end{smallmatrix}\right)
  \left(\begin{smallmatrix}
    u \\ -q
  \end{smallmatrix}\right)
)\\
\delta_4=\pos(
  \left(\begin{smallmatrix}
    v_2+w_1 \\ 1
  \end{smallmatrix}\right),
  \left(\begin{smallmatrix}
    w_2+w_1 \\ 1
  \end{smallmatrix}\right)
  \left(\begin{smallmatrix}
    u \\ -q
  \end{smallmatrix}\right)
).
\end{eqnarray*}

We get
\begin{eqnarray*}
 4  &=& \left|\det(
\left(\begin{smallmatrix}
    v_1+v_2 & w_1+v_2 & u\\ 
    1       &   1     & -q
\end{smallmatrix}\right)\right| + 
\left|\det(
\left(\begin{smallmatrix}
    v_2+v_1 & w_2+v_1 & u\\ 
    1       &   1     & -q
\end{smallmatrix}\right)\right| \\
&+&
\left|\det(
\left(\begin{smallmatrix}
    v_1+w_2 & w_1+w_2 & u\\ 
    1       &   1     & -q
\end{smallmatrix}\right)\right|+
\left|\det(
\left(\begin{smallmatrix}
    v_2+w_1 & w_2+w_1 & u\\ 
    1       &   1     & -q
\end{smallmatrix}\right)\right|\\
&=& 2q \left|\det(
\left(\begin{smallmatrix}
    v_1-w_1 & v_2-w_2
\end{smallmatrix}\right)\right| = 2q \vol(\Box)
\end{eqnarray*}
Because $q>1$ this implies, that $q=2$ and $\vol(\Box)=1$. So after applying a lattice automorphism we get the slices of figure~\ref{fig:quadric}.
\end{proof}

\bibliographystyle{alpha}
\bibliography{tfano.bib}

\end{document}